\documentclass[11pt]{article}
\usepackage{bbm}
\usepackage{geometry}
\usepackage{color}
\usepackage{amsfonts}
\usepackage{algorithm}
\usepackage{algorithmic}
\usepackage{latexsym, amssymb, amsmath, amscd, amsthm, amsxtra}
\usepackage{mathtools}
\usepackage{enumerate}
\usepackage[all]{xy}
\usepackage{mathrsfs}
\usepackage{fancyhdr}
\usepackage{listings}
\usepackage{hyperref}
\usepackage{enumitem}

\pagestyle{plain}

\def\P{\mathbb{P}}

\def\E{\mathbb{E}}

\def\11{\mathbbm{1}}
\def\Gc{\mathcal{G}}

\def\Es{\mathsf E}
\def\ER{Erd\H{o}s-R\'enyi\ }

\def\Vs{\mathsf V}
\def\Gs{\mathsf G}

\newcommand{\Pb}{\mathbb P}

\newtheorem{thm}{Theorem}[section]

\newtheorem{proposition}[thm]{Proposition}

\newtheorem{lemma}[thm]{Lemma}

\newtheorem{defn}[thm]{Definition}

\theoremstyle{definition}
\newtheorem{remark}[thm]{Remark}

\numberwithin{equation}{section}

\makeatletter
\newenvironment{breakablealgorithm}
{
		\begin{center}
			\refstepcounter{algorithm}
			\hrule height.8pt depth0pt \kern2pt
			\renewcommand{\caption}[2][\relax]{
				{\raggedright\textbf{\ALG@name~\thealgorithm} ##2\par}%
				\ifx\relax##1\relax 
				\addcontentsline{loa}{algorithm}{\protect\numberline{\thealgorithm}##2}%
				\else 
				\addcontentsline{loa}{algorithm}{\protect\numberline{\thealgorithm}##1}%
				\fi
				\kern2pt\hrule\kern2pt
			}
		}{
		\kern2pt\hrule\relax
	\end{center}
}
\makeatother

\hypersetup{colorlinks=true,linkcolor=blue}

\begin{document}

	\title{A polynomial-time approximation scheme for the maximal overlap of two independent \ER graphs}
	
	\author{Jian Ding \\ Peking University\and Hang Du \\ Peking University\and Shuyang Gong\\ Peking University}
	
	\maketitle

\begin{abstract}
	For two independent \ER graphs $\mathbf G(n,p)$, we study the maximal overlap (i.e., the number of common edges) of these two graphs over all possible vertex correspondence. We present a polynomial-time algorithm which finds a vertex correspondence whose overlap approximates the maximal overlap up to a multiplicative factor that is arbitrarily close to 1. As a by-product, we prove that the maximal overlap is asymptotically $\frac{n}{2\alpha-1}$ for $p=n^{-\alpha}$ with some constant $\alpha\in (1/2,1)$. 
\end{abstract}

\section{Introduction}\label{sec:intro}

In this paper we study the random optimization problem of maximizing the overlap for two independent \ER graphs over all possible vertex correspondence. More precisely, we fix two vertex sets $V=\{v_1,\dots, v_n\},\Vs=\{\mathsf v_1,\dots,\mathsf v_n\}$ with edge sets 
\begin{equation}
\label{def:E0 and Es0}
	E_0=\{(v_i,v_j):1\le i<j\le n\},\ \Es_0=\{(\mathsf v_i,\mathsf v_j):1\le i<j\le n\}\,.
\end{equation}
For a parameter $p\in (0,1)$, we consider two \ER random graphs $ G = (V,E)$ and $\Gs = (\Vs,\Es)$ where $E \subset E_0$ and $\Es\subset \Es_0$ are obtained from keeping each edge in $E_0$ and $\Es_0$ (respectively) with probability $p$ independently. For convenience, we abuse the notation and denote by $G$ and $\Gs$ the adjacency matrices for the two graphs respectively. That is to say, $G_{i, j} = 1$ if and only if $(v_i, v_j)\in E$ and $\Gs_{i, j} = 1$ if and only if $(\mathsf v_i, \mathsf v_j) = 1$ for all $1\leq i<j\leq n$.
 Let $S_n$ be the collection of all permutations on $[n]=\{1, \ldots, n\}$.  For $\pi\in S_n$, define
 \begin{equation}
	\mathrm{Overlap}(\pi)\stackrel{\text{def}}{=}\sum_{1\leq i<j\leq n} G_{i,j}\Gs_{\pi(i),\pi(j)}
\end{equation}
to be the number of edges $(v_i, v_j)\in E$ with $(\mathsf v_{\pi(i)}, \mathsf v_{\pi(j)})\in \Es$.
Our main result is a polynomial-time approximation scheme (PTAS) for this optimization problem.
\begin{thm}
	\label{thm:main}
	Assume $p=n^{-\alpha}$ for some $\alpha\in (1/2,1)$. For any fixed $\varepsilon>0$, there exist a constant $C=C(\varepsilon)$ and an algorithm (see Algorithm~\ref{Alg}) with running time $O(n^C)$ which takes $G,\Gs$ as input and outputs a permutation $\pi^*\in S_n$, such that 
	\begin{equation}
		\label{eq:approximation}
		\P\left[\frac{\mathrm{Overlap}(\pi^*)}{n}\ge \frac{1-\varepsilon}{2\alpha-1}\right]=1-o(1),\quad\text{as }n\to\infty. 
	\end{equation}
	In addition, $\max_{\pi\in S_n} \frac{\mathrm{Overlap}(\pi)}{n}$ converges to $\frac{1}{2\alpha - 1}$ in probability as $n\to \infty$.
\end{thm}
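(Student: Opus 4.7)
The theorem combines two assertions: the upper bound $\max_\pi \mathrm{Overlap}(\pi)/n \le 1/(2\alpha-1) + o(1)$ with high probability, which is a first-moment calculation; and the existence of a polynomial-time algorithm producing $\pi^*$ with overlap at least $(1-\varepsilon) n/(2\alpha-1)$, which is the main contribution. Together they yield the convergence in probability.

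For the upper bound, the plan is a direct union bound. Fix $\pi \in S_n$; since $G$ and $\Gs$ are independent, the indicators $G_{ij}\Gs_{\pi(i)\pi(j)}$ are i.i.d.\ Bernoulli$(p^2)$ over pairs $i<j$, so $\mathrm{Overlap}(\pi) \sim \mathrm{Bin}(\binom{n}{2}, p^2)$ and the standard binomial tail gives $\P[\mathrm{Overlap}(\pi) \ge k] \le \binom{\binom{n}{2}}{k} p^{2k}$. Multiplying by $n!$ and substituting $p = n^{-\alpha}$ and $k = cn$ with $c = (1+\delta)/(2\alpha-1)$, Stirling simplifies the logarithm of the expected count to $(1 - c(2\alpha-1))\, n\log n + O(n) = -\delta n \log n + O(n) \to -\infty$. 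Markov's inequality then yields $\max_\pi \mathrm{Overlap}(\pi) \le (1+\delta) n/(2\alpha-1)$ with high probability; arbitrariness of $\delta>0$ gives the upper half of the convergence in probability.

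For the lower bound, the plan is to design the algorithm with two phases. An ``enumeration'' phase runs in $O(n^{C(\varepsilon)})$ time and produces a polynomial-size list of candidate partial matchings between small subsets of $V$ and $\Vs$; an ``assembly'' phase greedily combines compatible candidates into a partial permutation and extends it arbitrarily to a full $\pi^* \in S_n$. The output guarantee will come from showing that, with high probability, enough compatible candidates exist and can be packed to realize overlap $\ge (1-\varepsilon) n/(2\alpha-1)$.

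The main obstacle is the structural/probabilistic analysis behind this algorithm. A direct second moment on $|\{\pi : \mathrm{Overlap}(\pi) \ge k\}|$ is intractable because permutations that agree on many coordinates have strongly correlated overlaps. Moreover, first-moment counting shows that the extremal overlap subgraph has $\sim n/(2\alpha-1)$ edges on $\sim n$ vertices, hence average degree $2/(2\alpha-1) > 2$ for $\alpha\in(1/2,1)$, so it is not tree-like; consequently, a naive strategy enumerating small constant-size disjoint seeds falls short, since the maximum edge-to-vertex density of such seeds having $\Omega(n/s)$ copies in both $G$ and $\Gs$ is only $1/\alpha$, yielding total disjoint-packed overlap at most $n/\alpha$, which is strictly less than $n/(2\alpha-1)$ when $\alpha<1$. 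The plan to circumvent this is to characterize near-optimal overlaps more carefully via an exploration/peeling procedure on $G$ and $\Gs$ anchored at carefully chosen local pieces, exploiting the sparsity $p = n^{-\alpha}$ to decouple far-apart regions; the algorithm's enumeration must then be designed to capture the building blocks uncovered by this analysis, and a concentration argument controls the assembly step. Balancing enumeration cost against coverage determines $C(\varepsilon)$.
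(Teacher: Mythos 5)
Your upper bound argument is correct and matches the paper's Proposition~\ref{prop-upper-bound} essentially verbatim: a union bound over $\pi\in S_n$ together with a binomial tail estimate for $\mathbf B(\binom n2,p^2)$, exploiting $(n!)^{-1} \gg \exp(-\rho(2\alpha-1)n\log n)$ for $\rho>1/(2\alpha-1)$.

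The lower bound plan, however, has a genuine gap. You correctly identify that naive packing of disjoint seeds cannot beat $n/\alpha$ (because you must pay $n^{v}$ twice --- once to place $v$ vertices in $V$, once in $\Vs$ --- against only $p^{2e}$, limiting the edge-to-vertex ratio of a usable seed), and that $n/\alpha < n/(2\alpha-1)$. But the proposed fix, ``characterize near-optimal overlaps \ldots via an exploration/peeling procedure \ldots anchored at carefully chosen local pieces,'' does not contain the idea that resolves this tension. The paper's resolution is an amortization through an already-matched set: one iterates, maintaining a partial bijection $\pi^*$ on a set $\operatorname M_{s-1}$ of matched indices; at each step one adds a copy of a fixed rooted tree $\mathbf T$ with $\chi$ non-leaf (``new'') vertices, $\xi$ leaves, and $\zeta$ edges, and one insists that the leaf set lie entirely inside the already-matched set (on both sides, consistently with $\pi^*$). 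Because the leaves are free --- they are not newly chosen --- and because the tree's edges are found first in $G$ alone (giving $\approx(np)^\zeta$ candidate leaf-tuples $\operatorname L$ rooted at a fixed unmatched $u$), the check in $\mathsf G$ only needs probability $\approx n^{\chi}p^{\zeta}$, so a step succeeds when $(np)^\zeta\cdot n^\chi p^\zeta = n^{\chi-(2\alpha-1)\zeta}\gg 1$, i.e.\ for any $\zeta/\chi$ strictly below $1/(2\alpha-1)$. This is precisely what lets the per-step edge increment per matched vertex approach $1/(2\alpha-1)$, and it is the step your disjoint-seed accounting misses. Two further ingredients are also absent from your plan and constitute most of the paper's difficulty: (a) the construction of $\mathbf T$ itself, via a continued-fraction choice of $\chi,\zeta$ and balance conditions \eqref{eq:balanced}--\eqref{eq:balanced-2} that simultaneously handle the irrationality of $1/(2\alpha-1)$ and make all relevant subgraph densities lie on the right side of the first-moment threshold (Lemma~\ref{lem:designing-tree}); and (b) the control of dependencies across iterations --- the conditional second-moment argument (Propositions~\ref{prop:one-point-est}, \ref{prop:two-point-est}, \ref{prop:good-events-are-typical}) that shows the success probability of a step is not destroyed by conditioning on the entire history of previous explored and rejected tuples. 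Without these, the ``assembly'' and ``concentration'' steps of your outline are not substantiated, and the sentence ``exploiting the sparsity to decouple far-apart regions'' does not survive contact with the fact that each step deliberately reuses previously matched vertices as anchors, creating exactly the long-range dependence that must be tamed.
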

\begin{remark}
For $\alpha \in (0, 1/2)$, a straightforward computation yields that with probability tending to 1 as $n\to \infty$, the overlap is asymptotic to $\frac{n^2 p^2}{2}$ for all $\pi \in S_n$. Thus, this regime is trivial if one's goal is to approximate the maximal overlap in the asymptotic sense. For the ``critical'' case when $\alpha$ is near $1/2$, the problem seems more delicate and our current method falls short in approximating the maximal overlap asymptotically.
\end{remark}
\begin{remark}
One can find Hamiltonian cycles in both graphs with an efficient algorithm (see \cite{NSS21} as well as references therein) and obviously this leads to an overlap with $n$ edges. Also, a simple first moment computation as in the proof of Proposition~\ref{prop-upper-bound} shows that the maximal overlap is of order $n$. Therefore, the whole challenge in this paper is to nail down the asymptotic constant $1/(2\alpha - 1)$.
\end{remark}
\begin{remark}\label{rem-Xu-Wu}
Theorem~\ref{thm:main} provides an \emph{algorithmic} lower bound on the maximal overlap which is sharp asymptotically (i.e., the correct order with the correct leading constant), and we emphasize that prior to our work the asymptotic maximal overlap was not known even from a non-constructive perspective. In fact, the problem of the asymptotic maximal overlap was proposed by  Yihong Wu and Jiaming Xu as an intermediate step in analyzing the hardness of matching correlated random graphs, and Yihong Wu and Jiaming Xu have discussed extensively with us on this problem. It is fair to say that we have come up with a fairly convincing roadmap using the second moment method, but the technical obstacles seem at least somewhat daunting such that we did not manage to complete the proof. The current paper was initiated with the original goal of demonstrating an \emph{information-computation gap} for this random optimization problem (as we have believed back then), but it turned out to evolve in a somewhat unexpected way: not only the maximal overlap can be approximated by a polynomial time algorithm, but also it seems (as of now) this may even be a more tractable approach even if one's goal is just to derive the asymptotic maximal overlap. Finally, the very stimulating discussions we had with Yihong Wu and Jiaming Xu, while not explicitly used in the current paper, have been hugely inspiring to us. We record this short ``story'' here as on the one hand it may be somewhat enlightening to the reader, and on the other hand we would like to take this opportunity to thank Yihong Wu and Jiaming Xu in the warmest terms. 
\end{remark}
\subsection{Background and related results}

Our motivation is twofold: on the one hand, we wish to accumulate insights for understanding the computational phase transition for matching two correlated random graphs; on the other hand, as mentioned in Remark~\ref{rem-Xu-Wu} we wished to study the computational transition for the random optimization problem of the maximal overlap, which is a natural and important combinatorial optimization problem known to be NP-hard to solve in the worst-case scenario. We will further elaborate both of these two points in what follows. 

Recently, there has been extensive study on the problem of matching the vertex correspondence between two correlated graphs and the closely related problem of detecting the correlation between two graphs. From the applied perspective, some versions of graph matching problems have emerged 
from various applied fields such as social network analysis \cite{NS08,NS09}, computer vision \cite{CSS07,BBM05}, computational biology \cite{SXB08,VCP15} and natural language processing \cite{HNM05}. From the theoretical perspective,  graph matching problems seem to provide another important set of examples with the intriguing \emph{information-computation gap}. The study of information-computation gaps for high-dimensional statistical problems as well as random optimization problems is currently an active and challenging topic. For instance, there is a huge literature on the problem of recovering communities in stochastic block models (see the monograph \cite{Abbe17} for an excellent account on this topic) together with some related progress on the optimization problem of extremal cuts for random graphs \cite {DMS17, MS16}; there is also a huge literature on the hidden clique problem and the closely related problem of submatrix detection (see e.g. \cite{wu_xu_2021} for a survey and see e.g. \cite{GZ19+, MRS21} and references therein for a more or less up-to-date review). In addition, it is worth mentioning that in the last few decades various framework has been proposed to provide evidence on computational hardness for such problems with random inputs; see surveys \cite{ZK16, BPW18, RSS19, KWB19, Garmarnik21} and references therein. 

As for random graph matching problems, so far most of the theoretical study was carried out for Erd\H{o}s-R\'enyi graph models since Erd\H{o}s-R\'enyi graph is arguably the most canonical random graph model. Along this line, much progress has been made recently, including information-theoretic analysis \cite{CK16, CK17, HM20, WXY20+,WXY21+,DD22+,DD22+b} and proposals for various efficient algorithms  \cite{PG11, YG13, LFP14, KHG15, FQRM+16, SGE17, BCL19, DMWX21, BSH19, CKMP19,DCKG19, MX20, FMWX20,  GM20, FMWX19+, MRT21+, MWXY21+}.  As of now, it seems fair to say that we are still relatively far away from being able to completely understand the phase transition for computational complexity of graph matching problems---we believe, and we are under the impression that experts on the topic also believe, that graph matching problems do exhibit information-computation gaps. Thus, (as a partial motivation for our study) it is plausible that understanding the computational aspects for maximizing the overlap should shed lights on the computational transition for graph matching problems.

 As hinted earlier, it is somewhat unexpected that a polynomial time approximation scheme for the maximal overlap problem exists while the random graph matching problem seems to exhibit an information-computation gap. As a side remark, before finding this approximation algorithm, we have tried to demonstrate the computational hardness near the information threshold via the overlap gap property but computations suggested that this problem does not exhibit the overlap gap property. In addition, we point out that efficient algorithms have been discovered to approximate ground states for some spin glass problems \cite{Subag21, Montanari19}, which take advantage of the so-called full replica symmetry breaking property. From what we can tell,  \cite{Subag21, Montanari19} seem to be rare natural examples for which approximation algorithms were discovered for random instances whereas the worse-case problems are known to be hard to solve. In a way, our result contributes yet another example of this type and possibly of a different nature since we do not see full replica symmetry breaking in our algorithm.

\subsection{An overview of our method}\label{sec:overview}

The main contribution of our paper is to propose and analyze the algorithm as in Theorem~\ref{thm:main}. The basic idea for our algorithm is very simple: roughly speaking, we wish to match vertices of two graphs sequentially such that the increment on the number of common edges (within matched vertices) per each matched pair is $(2\alpha-1)^{-1}$.  

An obvious issue is that $(2\alpha-1)^{-1}$ may not be an integer. Putting this aside for a moment, we first explain our idea in the simple case when $p=n^{-\frac{3}{4}+\delta}$ with some arbitrarily small $\delta>0$. In this case, our goal is to construct some $\pi\in S_n$ such that $\mathrm{Overlap}(\pi)\ge (2-\varepsilon)n$ for an arbitrarily small $\varepsilon >0$. Assume we have already determined $\pi(1),\ldots,\pi(k)$ for some $k\in \left[\frac\varepsilon 3 n,\left(1-\frac{\varepsilon}{3}\right)n\right]$ (i.e., assume we have already matched $ v_1,\dots,v_{k}\in V$ to $\mathsf v_{\pi(1)},\dots,\mathsf v_{\pi(k)}\in \Vs$). We wish to find some $\ell \in [n]\setminus \{\pi(1),\dots,\pi(k)\}$ such that 
\begin{equation}\label{eq-alg-feasible}
\sum_{1\le j\le k}G_{j,k+1}\Gs_{\pi(j), \ell}\ge 2\,.
\end{equation}
 Provided that this is feasible, we may set $\pi(k+1)= \ell$. Therefore, the crux is to show that \eqref{eq-alg-feasible} is feasible for most of the steps. Ignoring the correlations between different steps for now, we can then regard $\sum_{1\le j\le k} G_{j,k+1}\Gs_{\pi(j), \ell}$ as a Binomial variable $\mathbf B(k, p^2)$ for each  $\ell \in [n]\setminus \{\pi(1),\dots,\pi(k)\}$ and thus \eqref{eq-alg-feasible} holds with probability of order $(k p^2)^2 \gtrsim n^{\delta - 1}$. Since there are at least $\varepsilon n$ potential choices for $\ell$, (ignoring the potential correlations for now again) it should hold with overwhelming probability  that there exists at least one $\ell$ satisfying \eqref{eq-alg-feasible}. This completes the heuristics underlying the success of such a simple iterative algorithm. 
 
The main technical contribution of this work is to address the two issues we ignored so far: the integer issue and the correlations between iterations. In order to address the integer issue, it seems necessary to match $\chi$ vertices every step with an increment of $\zeta$ common edges for some suitably chosen $\chi, \zeta$ with $\zeta/\chi \approx (2\alpha - 1)^{-1}$. Thus, one might think that a natural analogue of \eqref{eq-alg-feasible} shall be the following: there exist $\ell_1, \ldots, \ell_{\chi} \in [n]\setminus \{\pi(1),\dots,\pi(k)\}$ such that
\begin{equation}\label{eq-alg-feasible-general}
\sum_{1\le j\leq k}\sum_{1 \leq i\leq \chi} G_{j,k+i}\Gs_{\pi(j), \ell_i}\geq  \zeta\,.
\end{equation}
However, \eqref{eq-alg-feasible-general} is not really feasible since in order for \eqref{eq-alg-feasible-general} to hold it is necessary that for some $i\in \{1,\dots,\chi\}$, there exists $\ell \in [n]\setminus \{\pi(1),\dots,\pi(k)\}$ such that  $\sum_{1\le j\le k} G_{j,k+i}\Gs_{\pi(j), \ell} \geq \lceil(2\alpha - 1)^{-1} \rceil$ (where $\lceil x\rceil$ denotes the minimal integer that is at least $x$). A simple first moment computation suggests that the preceding requirement cannot be satisfied for most steps. 

In light of the above discussions, we see that in addition to common edges between the matched vertices and the ``new'' vertices, it will be important to also take advantage of common edges that are within ``new'' vertices, which requires to also carefully choose a collection of vertices from $ V$ (otherwise typically there will be no edges within a constant number of fixed vertices). In order to implement this, it turns out that we may carefully construct a rooted tree $\mathbf T$ with $\chi$ non-leaf vertices and $\zeta$ edges, and we wish that the collection of added common edges per step contains an isomorphic copy of $\mathbf T$. To be more precise, let $\operatorname M = \{j\in [n]:   v_j \mbox{ has been matched}\}$ and let $\pi(\operatorname M) = \{\pi(j): j\in \operatorname M\}$, and in addition let $k$ be the minimal integer in $[n]\setminus \operatorname M$. We then wish to strengthen \eqref{eq-alg-feasible-general} to the following: there exists $\xi = \zeta + 1 - \chi$ integers $i_1, \ldots, i_{\xi} \in \operatorname M$, $\chi$ integers $j_1 = k, j_2, \ldots, j_{\chi} \in [n] \setminus \operatorname M $  and  $\chi$ integers $\ell_1, \ldots, \ell_{\chi}\in [n]\setminus \{\pi(1),\dots,\pi(k)\}$ such that the following hold:
\begin{itemize}
\item the subgraph of $G$ induced on $\{v_{i_1}, \ldots, v_{i_{\xi}}\} \cup \{v_{j_1}, \ldots,   v_{j_\chi}\}$ contains $\mathbf T$ as a subgraph with leaf vertices $\{v_{i_1}, \ldots,   v_{i_{\xi}}\}$;

\item the subgraph of $\Gs$ induced on $\{\mathsf v_{\pi(i_1)}, \ldots, \mathsf v_{\pi(i_{\xi})}\} \cup \{\mathsf v_{\ell_1}, \ldots, \mathsf v_{\ell_\chi}\}$
contains $\mathbf T$ as a subgraph with leaf vertices $\{\mathsf v_{\pi(i_1)}, \ldots, \mathsf v_{\pi(i_{\xi})}\}$.
\end{itemize}
(In this case, we then set $\pi(j_i) = \ell_i$ for $1\leq i\leq \chi$.)
In the above, we require the non-leaf vertices of isomorphic copies of $\mathbf T$ to be contained in ``new'' vertices in order to make sure that the edges we add per each iteration are disjoint. In order for the existence of isomorphic copies of $\mathbf T$, we need to pose some carefully chosen balanced conditions on $\mathbf T$; see Lemma~\ref{lem:designing-tree}, \eqref{eq:balanced} and \eqref{eq:balanced-2} below.

\begin{figure}[ht]
	\centering
	\includegraphics[scale=0.9]{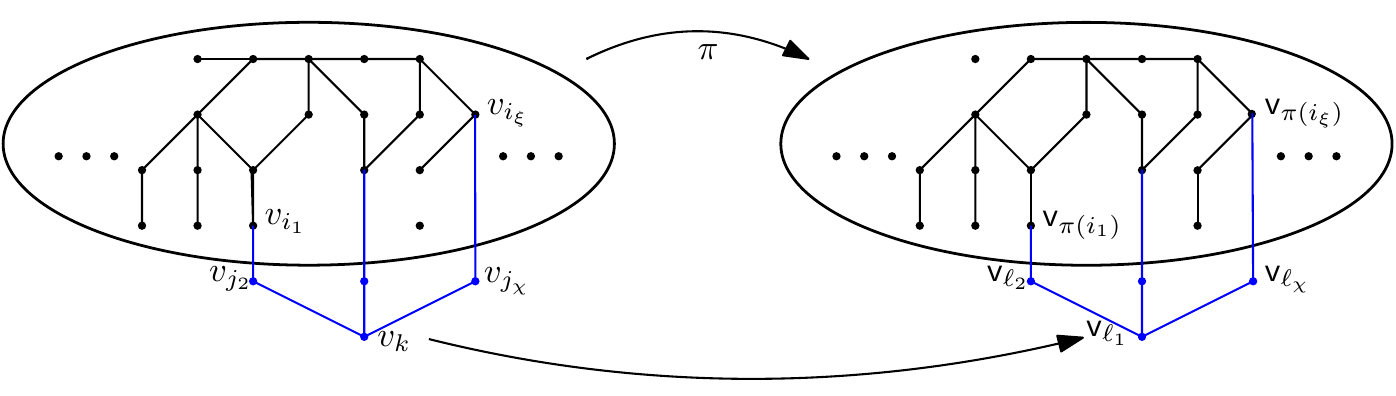}
	\caption{A single step in the iterative matching algorithm: the black vertices are matched ones, and the blue vertices are the ones matched in the current step (where a tree is added on both sides with leaves being black vertices).}
	\label{fig:matching_alg}
\end{figure}

The existence of $\mathbf T$, once appropriate balanced conditions are posed, is not that hard to show in light of \cite{BCL19, MWXY21+}. However, the major challenge comes in since we need to yet treat the correlations between different iterative steps. This is indeed fairly delicate and most of the difficult arguments in this paper (see Section~\ref{sec:complimentary-proof}) are devoted to address this challenge. 

Finally, we point out that in order for a high precision approximation of $(2\alpha - 1)^{-1}$, we will have to choose integers $\chi, \zeta$ fairly large. Since our algorithm is required to check all $\chi$-tuples over unmatched vertices, we only obtain an algorithm with polynomial running time where the power may grow in the precision of approximation. It remains an interesting question whether a polynomial time algorithm with a fixed power can find a matching that approximates the maximal overlap up to a constant that is arbitrarily close to 1.

\section{Proof of Theorem~\ref{thm:main}}
\subsection{Upper bound of the maximal overlap}\label{subsec:upper-bound}
In this subsection, we prove the upper bound of Theorem~\ref{thm:main} in the next proposition.
\begin{proposition}\label{prop-upper-bound}
	For any constant $\rho>\frac{1}{2\alpha-1}$, we have 
	\begin{equation}
		\label{eq:upper-bound}
		\Pb\left[\max_{\pi\in S_n}\mathrm{Overlap}(\pi)\ge \rho n\right]\to 0\,,\quad\text{as }n\to\infty\,.
	\end{equation}
\end{proposition}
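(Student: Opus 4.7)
My plan is a straightforward first moment / union bound argument over all permutations $\pi \in S_n$. For each fixed $\pi$, the variable $\mathrm{Overlap}(\pi) = \sum_{i<j} G_{i,j}\Gs_{\pi(i),\pi(j)}$ is a sum of $\binom{n}{2}$ independent Bernoulli$(p^2)$ random variables, hence distributed as $\mathbf B\bigl(\binom{n}{2},p^2\bigr)$, since $G$ and $\Gs$ are independent and the bijectivity of $\pi$ means the pairs $(\pi(i),\pi(j))$ are distinct. So the task reduces to controlling the upper tail of a binomial and paying an $n!$ union-bound factor.

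For the upper tail I will use the crude but sharp-enough bound $\Pb[\mathbf B(N,q)\ge k] \le \binom{N}{k}q^k$, which follows from a union bound over $k$-subsets (equivalently, from Markov applied to $\binom{X}{k}$). Applied with $N=\binom{n}{2}$, $q=p^2=n^{-2\alpha}$, and $k=\lceil \rho n\rceil$, together with $\binom{N}{k}\le (eN/k)^k$, this yields
\[
\Pb[\mathrm{Overlap}(\pi)\ge \rho n] \;\le\; \left(\frac{e n^2/2}{\rho n}\right)^{\rho n} n^{-2\alpha\rho n}
\;=\; \left(\frac{e}{2\rho}\right)^{\rho n} n^{\rho n(1-2\alpha)}.
\]
Multiplying by $|S_n|=n!\le n^n$ and taking logarithms, the exponent is at most $n\log n + \rho n(1-2\alpha)\log n + O(n) = \bigl(1+\rho(1-2\alpha)\bigr)n\log n + O(n)$. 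The hypothesis $\rho>\frac{1}{2\alpha-1}$ is exactly the condition $1+\rho(1-2\alpha)<0$, so the total expected number of permutations $\pi$ with $\mathrm{Overlap}(\pi)\ge \rho n$ decays like $n^{-c n\log n/\log n}$, and in particular tends to zero. Applying Markov's inequality to this first moment gives \eqref{eq:upper-bound}.

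There is essentially no hard step here; the only thing to double-check is that the binomial tail bound is sharp enough to beat the $n!$ factor, and the algebra above shows that the margin is exactly the strict inequality $\rho(2\alpha-1)>1$. One minor cosmetic point is that $\rho n$ need not be an integer, which is handled by replacing it with $\lceil \rho n\rceil$ throughout and absorbing the $O(1)$ correction into the $O(n)$ remainder in the exponent. The independence of the two graphs is what makes each summand Bernoulli$(p^2)$ rather than something more complicated, so no variance or correlation analysis is required for the upper bound; all the subtle work is reserved for the matching lower bound in the rest of the paper.
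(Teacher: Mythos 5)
Your proposal is correct and follows essentially the same route as the paper: a union bound over $S_n$, the observation that $\mathrm{Overlap}(\pi)\sim\mathbf B\big(\binom{n}{2},p^2\big)$ for each fixed $\pi$, and a Chernoff-type binomial upper-tail bound, which shows the $n!$ factor is beaten precisely because $\rho(2\alpha-1)>1$. The bound $\binom{N}{k}q^k\le (eN/k)^k q^k$ you use is identical, after taking logarithms, to the large-deviation estimate the paper cites.
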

\begin{proof}
	By a union bound we see the left hand side of \eqref{eq:upper-bound} is bounded by
	\begin{equation*}
		\sum_{\pi\in S_n}\Pb\left[\mathrm{Overlap}(\pi)\ge \rho n\right]=n!\Pb\left[\mathbf B\ge \rho n\right],
	\end{equation*}
where $\mathbf B$ is distributed as a binomial variable $\mathbf B(\binom{n}{2},p^2)$. By standard large deviation estimates for binomial variables (see e.g. \cite[Theorem 4.4]{MU05}), we have
\[
\Pb\left[\mathbf B\ge \rho n\right]\le \exp\left(-\rho n\log\left(\frac{\rho n}{\binom{n}{2}p^2}\right)+\rho n\right)=\exp\left(-\rho(2\alpha-1+o(1))n\log n+O(n)\right),
\]
which is of smaller magnitude than $\exp(-n\log n)\ll (n!)^{-1}$. This completes the proof.
\end{proof}

We introduce some asymptotic notation which will be used later. For non-negative sequences $f_n$ and $g_n$, we write $f_n \lesssim g_n$ if there exists a constant $C>0$ such that $f_n \leq C g_n$ for all $n\geq 1$. We write $f_n \asymp g_n$ if $f_n \lesssim g_n$ and $g_n \lesssim f_n$.

\subsection{Preliminaries of the algorithm}\label{subsubsec:tree}

The rest of the paper is devoted the description and analysis of the algorithm in Theorem~\ref{thm:main}. 
Since $\alpha\in (1/2,1)$, we may pick an integer $k\ge 1$ such that $\frac{1}{2\alpha-1}\in(k,k+1]$. Henceforth we will fix $\varepsilon>0$ together with a positive constant $\eta$ sufficiently close to $0$. More precisely, we choose $\eta>0$ such that
\begin{align}
	\label{eq:eta}
	&\frac{1-2\eta}{2\alpha+2\eta-1}>\frac{1-\varepsilon}{2\alpha-1}\,,\quad \frac{1}{2\alpha+2\eta-1}>k\,,\\
	\label{eq:eta-2}
	\mbox{and }&\bigg\lfloor\frac{(k+1)(2{(\alpha+\eta)}-1)-1}{2-2{(\alpha+\eta)}}\bigg\rfloor=\bigg\lfloor\frac{(k+1)(2{\alpha}-1)-1}{2-2{\alpha}}\bigg\rfloor\,,
\end{align}
where $\lfloor x\rfloor$ is the greatest integer $\le x$ and $\lceil x\rceil$ is the minimal integer $\ge x$.

As described in Section~\ref{sec:overview}, our  matching algorithm works by searching for isomorphic copies of  certain well-chosen tree in an iterative manner. We next construct this tree with a number of ``balanced conditions'' which will play an essential role in the analysis of the algorithm later.
Before doing so, we introduce some notation conventions. For any simple graph $\mathbf H$, let $V(\mathbf H),E(\mathbf H)$ be the set of vertices and edges of $\mathbf H$, respectively. Throughout the paper, we will use bold font for a graph (e.g., $\mathbf T$) to emphasize its structural information; we use normal font for  subgraphs  of $G$ (e.g., $ {T}$) and  we use mathsf font for subgraphs of $\Gs$ (e.g., $\mathsf T$). In addition, we use sans-serif font such as $\operatorname L,\operatorname Q$ to denote the subsets of $\{1,\cdots,n\}$ which serve as the subscripts of vertices in $G$ and $\Gs$.
\begin{lemma}
	\label{lem:designing-tree}
	There exist an {\emph{irrational}} number $\alpha_\eta$ and two integers $\chi,\zeta$ such that there is a rooted tree $\mathbf T$ with leaf set $\mathbf L$ and non-leaf set $\mathbf Q$ such that the following hold:\\ 
	\noindent \emph{(i)} $\alpha<\alpha_\eta<\alpha+\eta, |\mathbf Q|=\chi, |E(\mathbf T)|=\zeta$ and $0<\chi-(2\alpha_\eta-1)\zeta<1-\alpha_\eta$.\\
	\noindent \emph{(ii)} For each $u\in \mathbf Q$ which is adjacent to some leaves, $u$ is adjacent to exactly $k$ leaves.\\
	\noindent \emph{(iii)} For any subgraph $\mathbf{F}\subsetneq \mathbf T$ with $\mathbf L \subset \mathbf F$, it holds that
	\begin{equation}
		\label{eq:balanced}
		|V(\mathbf T)\setminus V(\mathbf F)|<\alpha_\eta |E(\mathbf T)\setminus E(\mathbf F)|\,.
	\end{equation}
    \noindent \emph{(iv)} For any subtree $\mathbf T_0\subsetneq \mathbf T$, it holds that
    \begin{equation}
    	\label{eq:balanced-2}
    	|V(\mathbf T_0)\cap \mathbf Q|-(2\alpha_\eta-1)|E(\mathbf T_0)|>\chi-(2\alpha_\eta-1)\zeta\,.
    \end{equation}
\end{lemma}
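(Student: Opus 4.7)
The plan proceeds in three stages: choose the number-theoretic parameters $\alpha_\eta, \chi, \zeta$; construct the tree $\mathbf T$; and verify the four listed conditions.

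For Stage 1, I note that the inequalities \eqref{eq:eta} and the identity \eqref{eq:eta-2} are semi-open/discrete conditions satisfied at $\alpha + \eta$, so they persist for $\alpha_\eta$ in an open sub-interval of $(\alpha, \alpha + \eta)$; by density of irrationals, $\alpha_\eta$ can be chosen irrational inside this sub-interval. Having fixed $\alpha_\eta$, the irrationality of $1/(2\alpha_\eta - 1)$ lets me apply Weyl's equidistribution theorem to produce arbitrarily large integers $\chi$ (within any prescribed residue class modulo $k$) for which the fractional part $\{\chi/(2\alpha_\eta - 1)\}$ lies in a prescribed sub-interval of $(0,1)$ of positive length. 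Choosing this sub-interval so as to force $\zeta := \lfloor \chi/(2\alpha_\eta - 1)\rfloor$ to satisfy $0 < \chi - (2\alpha_\eta - 1)\zeta < 1 - \alpha_\eta$ yields (i), and the residue-class constraint ensures $m := (\zeta - \chi + 1)/k$ is a positive integer, which will count the number of leaf-bearing non-leaves in the construction.

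For Stage 2, let $K := \lfloor ((k+1)(2\alpha_\eta - 1) - 1)/(2 - 2\alpha_\eta)\rfloor$, which by \eqref{eq:eta-2} equals the corresponding quantity with $\alpha$ in place of $\alpha_\eta$. I build $\mathbf T$ as a short backbone of leaf-less non-leaves with $m$ ``gadgets'' hung off it, where each gadget is a chain of at most $K+1$ non-leaves terminating in a leaf-bearing one carrying its $k$ leaves (the remaining non-leaves of the chain being leaf-less). Mixing gadget lengths (e.g., in $\{K, K+1\}$) and tuning the backbone length produces a rooted tree with exactly $\chi$ non-leaves, $m$ of them leaf-bearing, and $\zeta$ edges; condition (ii) is then immediate.

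For Stage 3, condition (iii) reduces to showing $e(S) > |S|/\alpha_\eta$ for every non-empty $S \subset \mathbf Q$, where the worst case is a pure leaf-less sub-chain of length at most $K+1$ for which $e(S) = |S| + 1$; this in turn reduces to the inequality $K + 1 < \alpha_\eta/(1 - \alpha_\eta)$, which I will derive from \eqref{eq:eta-2} together with $\alpha + \eta < 1/2 + 1/k$ (enforceable by taking $\eta$ small). For (iv), any proper subtree $\mathbf T_0 \subsetneq \mathbf T$ containing the root is obtained by pruning a disjoint family of rooted sub-subtrees (pieces) $P$ off $\mathbf T$, and the global inequality decomposes into the piecewise condition $\ell_P/q_P > (2-2\alpha_\eta)/(2\alpha_\eta - 1)$ for every piece with $q_P \geq 1$; the backbone/gadget design is chosen so each piece is a union of whole gadgets plus possibly a sub-gadget tail, for which the bound $L \leq K+1$ and the strict inequality $k(2\alpha_\eta - 1) > (K+1)(2-2\alpha_\eta)$ (coming from $\alpha_\eta$ being irrational) give the required margin. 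The principal obstacle I anticipate is the tight simultaneous satisfaction of (iii) and (iv): (iii) caps leaf-less stretches from above, while (iv) forbids ``internal-heavy'' pieces; the exact integer $K$ from \eqref{eq:eta-2} is the pivot that reconciles these opposing constraints, and the irrationality of $\alpha_\eta$ is what promotes the boundary equalities into the strict inequalities the lemma demands.
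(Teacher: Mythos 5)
Your proposal follows a genuinely different route from the paper's, and while the high-level plan is reasonable, it has two concrete gaps that prevent it from being a complete proof.

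\textbf{Comparison with the paper's approach.} The paper chooses $\chi,\zeta$ by a continued-fraction-type expansion of an irrational $\beta$ with $2\alpha-1<(1+\beta)/(k+1+\beta)<2(\alpha+\eta)-1$, truncated so that $\chi/\zeta$ lands in the desired window, and builds $\mathbf T$ as a fully balanced tree with branching factors $n_l,n_{l-1},\dots,n_1,k$ down the generations. The regularity of that tree is what makes (iii) and (iv) tractable: (iii) is proved by induction on height using ``entire subtrees,'' and (iv) is proved by an exact computation for the self-similar tails $\mathbf T_v$ followed by a monotonicity reduction. You instead propose to obtain $\chi,\zeta$ via Weyl equidistribution of $\{\chi/(2\alpha_\eta-1)\}$ along a residue class (a clean idea that would indeed produce the integrality of $m=(\zeta-\chi+1)/k$ together with (i)), and to build a caterpillar-style tree: a backbone with $m$ chain gadgets hanging off it. This is a legitimate alternative skeleton, but the verification steps don't close.

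\textbf{Gap 1: condition (iv) for subtrees not containing the root.} You explicitly restrict to ``any proper subtree $\mathbf T_0\subsetneq\mathbf T$ containing the root,'' but (iv) quantifies over \emph{all} proper subtrees. The subtrees not containing the root (in particular the rooted tails $\mathbf T_v$ for non-root $v$, which in your construction are suffixes of the backbone together with their gadgets) are precisely where the paper's argument needs its $\mathbb T$-step and the monotonicity reduction \eqref{eq:iteration of Tv}, and they are not covered by your piecewise decomposition. Worse, for a caterpillar these tails are the dangerous case: removing the first backbone vertex together with its gadget removes a subgraph whose non-leaf/edge ratio can be very close to $\chi/\zeta$ (for $k=1$, $K=0$, a backbone vertex carrying a single one-vertex gadget contributes $2$ non-leaves and $3$ edges, and whether $2/3\lessgtr\chi/\zeta$ depends on how many gadgets hang off each backbone vertex). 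So this case genuinely requires an argument, and the piecewise decomposition you describe does not supply one.

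\textbf{Gap 2: the construction is under-determined, and the parameter counts need not be achievable within the stated shape.} You need simultaneously: exactly $\chi$ non-leaves, exactly $m$ of them leaf-bearing, exactly $\zeta$ edges, no degree-two leaf-less chain longer than $K+1$, and no tail $\mathbf T_v$ whose non-leaf density drops to $\chi/\zeta$. ``Mixing gadget lengths in $\{K,K+1\}$ and tuning the backbone length'' does not obviously reconcile these: the backbone itself consists of leaf-less vertices, and if gadgets attach only intermittently, the backbone contains degree-two leaf-less chains whose length you have not controlled; if gadgets attach at every backbone vertex, the ratio $(\chi-m)/m$ forces the gadget lengths, and one must check (after the fact) that this is consistent with the length cap $K$ and with the tail-density constraint of Gap~1. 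You flag this tension yourself as the ``principal obstacle,'' but the resolution via the single integer $K$ is asserted rather than demonstrated, and the concrete tail computation above suggests the resolution is not automatic. In the paper this tension is sidestepped by the fully self-similar construction, in which every tail $\mathbf T_v$ is a scaled copy of a standard shape whose density can be computed exactly (the quantities $\alpha_j$ in the proof) and shown to strictly exceed $\widetilde\alpha$.

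In short: the Weyl-equidistribution step and the reduction of (iii) to bounding degree-two leaf-less chains are sound and somewhat more elementary than the paper's, but (iv) for non-root subtrees is missing and the caterpillar construction has not been shown to simultaneously meet the exact cardinality, chain-length, and tail-density constraints.
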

\begin{proof}
Recall that $k=\lceil\frac{1}{2\alpha-1}\rceil-1$. First, we claim there exist positive integers $n_1\leq n_2\leq\cdots\leq n_l$ such that
\begin{equation}
\label{eq:alpha tilde}
	2\alpha-1<\frac{1+\frac{1}{n_1}+\frac{1}{n_1n_2}+\cdots+\frac{1}{n_1n_2\cdots n_l}}{k+1+\frac{1}{n_1}+\frac{1}{n_1n_2}+\cdots+\frac{1}{n_1n_2\cdots n_{l-1}}}<2(\alpha+\eta)-1\,.
\end{equation}
In order to see this, pick an \emph{irrational} number $\beta$ such that $$ 2\alpha-1<\frac{1+\beta}{k+1+\beta}<2(\alpha+\eta)-1\,.$$
Then we can express $\beta$ as the infinite sum $$\beta=\frac{1}{n_1}+\frac{1}{n_1n_2}+\frac{1}{n_1n_2n_3}+\cdots$$
with positive integers $n_1\le n_2\le \cdots$ determined by the following inductive procedure: assuming $n_1,\dots,n_{k-1}$ have been fixed, we choose $n_k$ such that
\[
\frac{1}{n_k} < n_1\cdots n_{k-1}\left(\beta-\frac{1}{n_1}-\cdots-\frac{1}{n_1\cdots n_{k-1}}\right) < \frac{1}{n_k-1}
\]
(note that the irrationality of $\beta$ ensures the strict inequality above). It is straightforward to verify that this yields the desired expression. With this at hand, we obtain the desired relation \eqref{eq:alpha tilde} by an appropriate truncation. 

 Writing $\ell=n_1n_2\cdots n_l$, we set
 $$\chi=\ell\left(1+\frac{1}{n_1}+\cdots+\frac{1}{n_1n_2\cdots n_l}\right) \mbox{ and } \zeta=\ell\Big(k + 1+\frac{1}{n_1}+\cdots+\frac{1}{n_1n_2\cdots n_{l-1}}\Big)\,.$$
Thus, we have $\zeta=k\ell+\chi-1$. We define a rooted tree $\mathbf T$ with $(\ell+2)$ generations as follows: the $0$-th generation contains a single root; for $0\leq i\leq l-1$, each vertex in the $i$-th generation has $n_{l-i}$ children; each vertex in the $l$-th generation has  $k$ children (which are leaves). It is clear that $\mathbf{T}$ has $\chi$ non-leaf vertices and $\zeta$ edges. In addition, $\mathbf{T}$ satisfies (ii). Choose $ \widetilde{\alpha}$ such that $\chi/\zeta = 2\widetilde{\alpha}-1$. Then by \eqref{eq:alpha tilde}, we see that  (i) holds for $\alpha_\eta\in (\alpha,\widetilde{\alpha})$ which are sufficiently close to $\widetilde{\alpha}$. In the rest of the proof, we will show that for some irrational $\alpha_\eta<\widetilde\alpha$ sufficiently  close to $\widetilde{\alpha}$, (iii) and (iv) also hold. We remark that the proof is somewhat technical and it may be skipped at the first reading.

We begin with \eqref{eq:balanced}. We say $\mathbf T_0$ is an \emph{entire subtree} of $\mathbf T$ if $\mathbf{T}_0$ is a subtree of $\mathbf{T}$ such that $\mathbf T_0$ contains all neighbors of $u$ in $\mathbf T$ for any vertex $u$ with degree larger than 1 in $\mathbf T_0$. For a subgraph $\mathbf{F} \subset \mathbf T$, we consider the following procedure:
\begin{itemize}
\item remove all the vertices and edges in $\mathbf F$ from $\mathbf T$ and thus what remains is a union of vertices and edges while some edges may be incomplete since its one or both endpoints may have been removed;
\item for each aforementioned incomplete edge that remains and for each of the endpoint that has been removed, we add a \emph{distinct} vertex to replace the removed endpoint. 
\end{itemize}
\begin{figure}[ht]
\centering
\includegraphics[scale=0.8]{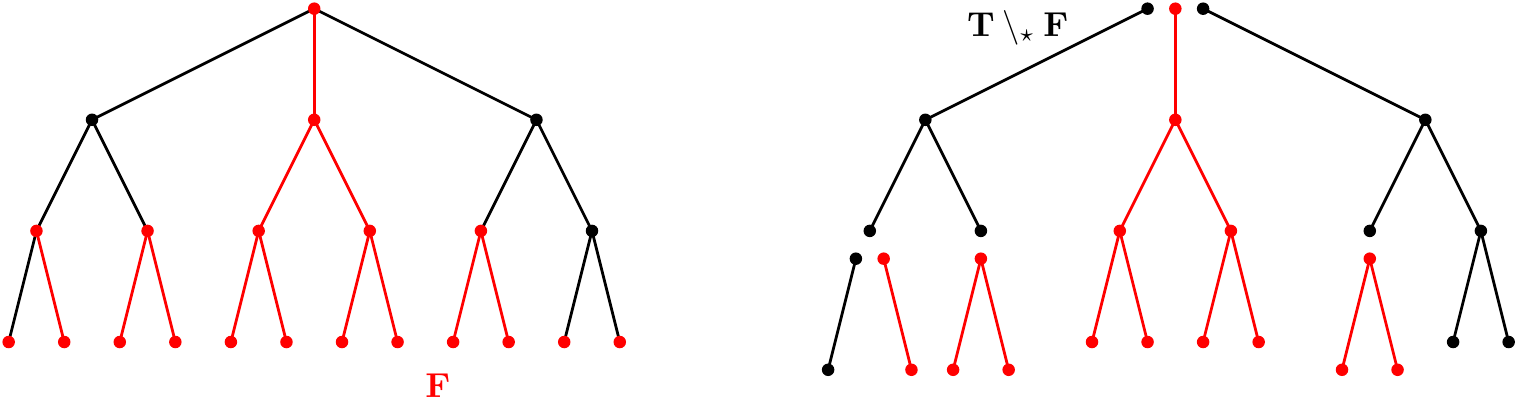}
\caption{The red part on the left is the subgraph $\mathbf F$. The black part on the  right is $\mathbf T\setminus_\star \mathbf F$. There are three entire subtrees (black) in $\mathbf T\setminus_\star \mathbf F$.}
\label{fig:Entire trees}
\end{figure}
At the end of this procedure, we obtain a collection of trees that are both edge-disjoint and vertex-disjoint and we denote this collection of trees as $\mathbf T\setminus_\star \mathbf F$. In addition, for each tree in $\mathbf T\setminus_\star \mathbf F$, it can be regarded as a subtree of $\mathbf T$ (if we identify the added vertex in the tree in $\mathbf T\setminus_\star \mathbf F$ to the corresponding vertex in $\mathbf T$) and one can verify that each such subtree is an entire subtree of $\mathbf T$. 

Therefore, in order to prove \eqref{eq:balanced} it suffices to show that for any entire subtree $\mathbf{T}_0 \subset \mathbf T$ where $\mathbf Q_0$ is the collection of vertices with degree larger than 1 in $\mathbf T_0$, we have 
\begin{equation}\label{eq-entire-tree-bound}
|\mathbf Q_0|\leq \alpha |E(\mathbf{T}_0)|
\end{equation}
 (note that $\alpha<\alpha_\eta$) and in addition the equality holds only when the tree is a singleton. For a subtree $\mathbf T_0$ of $\mathbf T$, assume its root $\mathbf v$ (i.e., the vertex in $V(\mathbf T_0)$ closest to the root of $\mathbf T$) lies in the $i$-th generation of $\mathbf T$. We define the height of $\mathbf T_0$ by
 \[
 h(\mathbf T_0)=\begin{cases}
 	l+1-i,\quad&\text{ if }|V(\mathbf T_0)|\le 2\text{ or }\mathbf v\in \mathbf Q_0\,,\\
 	l-i,\quad &\text{ if }|V(\mathbf T_0)|>2\text{ and }\mathbf v\notin \mathbf Q_0\,.
 \end{cases}
 \]

Since obviously the equality in \eqref{eq-entire-tree-bound} holds if $|V(\mathbf T_0)|\le 2$, we then deal with the other cases.
Let $m = |\{i: n_i=1\}|$. From \eqref{eq:alpha tilde} we see
\begin{equation*}
	\frac{1}{n_1}+\cdots+\frac{1}{n_1\cdots n_{l-1}}<\frac{(k+1)(2\widetilde{\alpha}-1)-1}{2-2\widetilde{\alpha}}<\frac{1}{n_1}+\cdots+\frac{1}{n_1\cdots n_l}\,.
\end{equation*}
Since $n_1=\cdots=n_m=1$ and $n_t\ge 2$ for $t>m$, we get from \eqref{eq:eta-2} that
\begin{equation}
	\label{eq:number of ni}
	m= \Big\lfloor\frac{(k+1)(2\widetilde{\alpha}-1)-1}{2-2\widetilde{\alpha}}\Big\rfloor=\Big\lfloor\frac{(k+1)(2{\alpha}-1)-1}{2-2{\alpha}}\Big\rfloor\,.
\end{equation}
We now  prove \eqref{eq-entire-tree-bound} for any entire subtree $\mathbf{T}_0$ with $h(\mathbf T_0) \leq m+1$.
\begin{itemize}
	\item If $1/2<{\alpha}\le3/4$, then we have $k\ge 2$ and $m=0$. Thus, $\mathbf T_0$ must be a star graph with $k$ or $k+1$ leaves, implying that 
	\[
	|\mathbf Q_0|-\alpha |E(\mathbf{T}_0)| \leq 1-k\alpha<0\,.
	\]
	\item If $3/4<\alpha<1$, then $k=1$ and  $\frac{2m+3}{2m+4}< \alpha\le \frac{2m+5}{2m+6}$. Thus, $\mathbf T_0$ must be a chain with length at most $m+2$, implying that
	\[
	|\mathbf Q_0|-\alpha |E(\mathbf{T}_0)|\le (m+1)-(m+2)\alpha<0\,.
	\] 
\end{itemize}

For an entire subtree $\mathbf T_0$ with $h(\mathbf T_0) \geq m+2$, we prove \eqref{eq-entire-tree-bound} by proving it for generalized entire subtrees, where $\mathbf T_0$ is a generalized entire subtree if $\mathbf T_0$ contains all the children (in $\mathbf T$) of $u$ for each $u\in \mathbf Q_0$. We prove a stronger version of \eqref{eq-entire-tree-bound} since we will prove it by induction (and it is not uncommon that proving a stronger statement by induction makes the proof easier). For the base case when $h(\mathbf T_0) = m+2$, it follows since
\[
|\mathbf Q_0|-\alpha |E(\mathbf T_0)|\le
\begin{cases}
	1-2\alpha,\quad &1/2<\alpha\le 3/4\,,\\
	(2m+3)-(2m+4)\alpha,\quad &3/4<\alpha<1\,,
\end{cases} 
\]
which is negative by preceding discussions. Now suppose that \eqref{eq-entire-tree-bound} holds whenever $\mathbf T_0$ is a generalized entire subtree with $h(\mathbf T_0) = h$, and we next consider a generalized entire subtree $\mathbf T_0$ with $h(\mathbf T_0)=h+1$. Denote the non-leaf vertex of $\mathbf T_0$ at the $(l-h-1)$-th generation (in $\mathbf T$) by $\mathbf o_0$ (this is well-defined since $\mathbf o_0$ uniquely exists as long as $\mathbf T_0$ contains more than $2$ vertices). In addition, denote  the subtrees of $\mathbf{T}_0$ rooted at children of $\mathbf o_0$ by $\mathbf{T}_1,\dots,\mathbf{T}_s$ where $\mathbf T_i$ contains all the descendants (in $\mathbf T_0$) of its root. Then $\mathbf{T}_i$ is a generalized entire subtree with height $h$ for $i = 1, \ldots, s$. Thus, by our induction hypothesis we get (denoting by $\mathbf Q_i$ the non-leaf vertices in $\mathbf T_i$)
\begin{equation}
	\label{eq:induction-hypothesis}
	|\mathbf Q_i|\le \alpha|E(\mathbf{T}_i)| \mbox{ for all } 1\le i\le s\,.
\end{equation}
Note that $|\mathbf Q_0|=1+|\mathbf Q_1|+\ldots+|\mathbf Q_s|$ and $|E(\mathbf{T}_0)|\geq s+|E(\mathbf{T}_1)|+\cdots+|E(\mathbf{T}_s)|$. Combined with \eqref{eq:induction-hypothesis} and the fact $s\ge 2$, it yields that $|\mathbf{Q}_0|-\alpha|E(\mathbf{T}_0)|<0$, completing the inductive step for the proof of \eqref{eq-entire-tree-bound} (so this proves \eqref{eq:balanced}).

We next prove \eqref{eq:balanced-2}. It suffices to show that for any subtree $\mathbf{T}_0\subsetneq \mathbf T$, 
\begin{equation}
	\label{eq:density lower bound}
	\frac{|V(\mathbf{T}_0)\cap \mathbf Q|}{|E(\mathbf{T}_0)|}>\frac{|\mathbf Q|}{|E(\mathbf{T})|}=\frac{\chi}{\zeta}=2\widetilde{\alpha}-1\,.
\end{equation}
Indeed, \eqref{eq:density lower bound} implies that 
\[
|V(\mathbf T_0)\cap \mathbf Q|-(2\widetilde\alpha-1)|E(\mathbf T_0)|>0=\chi-(2\widetilde{\alpha}-1)\zeta \mbox{ for all } \mathbf T_0\subsetneq \mathbf T\,.
\]
Writing $A$ the set of $\alpha_\eta$ for which \eqref{eq:balanced-2} holds, we get from the preceding inequality that $\widetilde{\alpha}\in A$. By the finiteness of $\mathbf T$, we have that $A$ is an open set and thus we can pick an irrational $\alpha_\eta \in A$ with $\alpha_\eta<\widetilde{\alpha}$ and sufficiently close to $\widetilde{\alpha}$.

Define
$\mathbb{T}=\{\mathbf{T}_v:  v\in \mathbf T\}$ where $\mathbf{T}_v$ is a subtree rooted at $v$ containing all descendants of $v$ in $\mathbf T$.
We first verify \eqref{eq:density lower bound} for subtrees in $\mathbb{T}$. Denoting $n_0=1$, we see that for any $0\le j\le l$ and any vertex $v$ in the $(l-j)$-th generation, 
\[
\frac{|V(\mathbf T_v)\cap \mathbf Q|}{|E(\mathbf T_v)|}=\frac{\frac{1}{n_0}+\frac{1}{n_1}+\cdots+\frac{1}{n_1\cdots n_j}}{k+\frac{1}{n_0}+\frac{1}{n_1}+\cdots+\frac{1}{n_1\cdots n_{j-1}}}\stackrel{\operatorname {def}}{=}2\alpha_j-1\,.
\]
We now show $\alpha_j>\widetilde{\alpha}$ for any $0\le j\le l-1$. This is true for $j=0$ by the choice of $\eta$ in \eqref{eq:eta}. For $j \geq 1$, we assume otherwise that the inequality does not hold for some $j\geq 1$. Then we may pick the minimal $j_0\in \{1,2,\dots,l-1\}$ such that $\alpha_{j_0}\leq \widetilde{\alpha}$.  By minimality of $j_0$, we have that $\alpha_{j_0-1}>\widetilde{\alpha}$ and thus $n_{j_0}>\frac{1}{2\widetilde{\alpha}-1}$. Since $\{n_j\}_{j=0}^{l}$ is non-decreasing, we get $n_j>\frac{1}{2\widetilde{\alpha}-1}$ for all $j\ge j_0$, and thus $\alpha_j<\widetilde{\alpha}$ for any $j>j_0$. This implies $\widetilde{\alpha}=\alpha_l<\widetilde{\alpha}$, arriving at a contradiction and thus verifying \eqref{eq:density lower bound} for subtrees in $\mathbb{T}$.

Finally, we reduce the general case to subtrees in $\mathbb{T}$ as follows: for any subtree $\mathbf T_0\subset\mathbf T$, we denote its root by $\mathbf o_0$ (this is the closest vertex in $\mathbf T_0$ to the root of $\mathbf T$) and consider $\mathbf T_{\mathbf o_0}\in\mathbb T$.  It is clear that $\mathbf T_0$ can be obtained from $\mathbf T_{\mathbf o_0}$ by deleting some vertices and edges. Further, the numbers of vertices and edges deleted are the same and let us denote by $N$ this number. We then have
\begin{equation}
	\label{eq:iteration of Tv}
	\frac{|V(\mathbf T_0)\cap \mathbf Q|}{|E(\mathbf T_0)|}\ge \frac{|V(\mathbf T_{\mathbf o_0})\cap \mathbf Q|-N}{|E(\mathbf T_{\mathbf o_0})|-N}\geq \frac{|\mathbf T_{\mathbf o_0}\cap \mathbf Q|}{E(\mathbf T_{\mathbf o_0})}\geq {2\widetilde{\alpha}-1}\,,
\end{equation}
with equality holds if and only if $N=0$ and $\mathbf T_{\mathbf o_0}=\mathbf{T}$ (that is, $\mathbf{T}_0=\mathbf{T}$). This completes the proof of \eqref{eq:density lower bound}. 
\end{proof}

In what follows, we fix $\alpha_\eta,\chi,\zeta$ and the tree $\mathbf T$ given in Lemma~\ref{lem:designing-tree}, and let $\xi = \zeta-\chi+1$. We label the vertices of $\mathbf T$ by $1,2,\dots,\chi,\chi+1,\dots,\chi+ \xi$, such that the root is labeled by $1$, $\mathbf Q=\{1,2,\dots,\chi\}$ and $\mathbf L=\{\chi+1,\dots,\chi+ \xi\}$. Recall that
\begin{equation}
	\label{eq:edge-of-the-tree}
	E(\mathbf T)=\{(i,j):1\le i<j\le \chi+ \xi, i\mbox{ is adjacent to } j \mbox{ in }\mathbf T\}\,. 
\end{equation}

For later proof, it would be convenient to consider \ER graphs with edge density $p_\eta=n^{-\alpha_\eta}$, and we can reduce our problem to this case by monotonicity. Indeed, since $\alpha_\eta>\alpha$ we have $p_\eta < p$ for all $n$ and thus $\mathbf G(n,p)$ is stochastically dominated by $\mathbf G(n,p_\eta)$. In addition, since monotonicity in $p$ naturally holds for $\mathrm{Overlap}(\pi)$, we may change the underlying distribution from $\mathbf{G}(n,p)$ to $\mathbf G(n,p_\eta)$ and prove the lower bound for the latter. In what follows we use $G$ and $\Gs$ to denote two independent \ER graphs $\mathbf G(n,p_\eta)$ (as well as the respective adjacency matrices); we drop the superscript $\eta$ here for notation convenience. We denote by $\Pb$ the joint law of $(G, \mathsf G)$. Finally, we remark that the assumption of irrationality for $\alpha_\eta$ in Lemma~\ref{lem:designing-tree} is purely technical, which will be useful in later proofs (for the purpose of ruling out equalities).

\subsection{Description of the algorithm}\label{subsubsec:algo}

We first introduce some notations.  For any nonempty subset $\operatorname S\subset [n]$ and any integer $1\le m\le |\operatorname S|$, define (recalling \eqref{eq:edge-of-the-tree})
\begin{equation}
	\label{eq:def-ordered-tuple}
	\mathfrak{A}(\operatorname S,m)=\{(i_1,\dots,i_m) \in S^m: i_1,\dots,i_m\text{ are distinct}\}\,.
\end{equation}
In addition, for $m\in \{\chi,\xi\}$, we sample a random total ordering $\prec_m$ on the set $\mathfrak{A}([n],m)$ uniformly from all possible orderings and fix it (This can be done efficiently, since it is equivalent to sampling a uniform permutation on a set with cardinality polynomial in $n$, which can be done in poly-time in $n$).
We will omit the subscript when it is clear in the context. For any tuple $\operatorname L = (t_1, \ldots, t_m)$ and any permutation $\pi$, we write $\pi(\operatorname L) = (\pi(t_1), \ldots, \pi(t_m))$. For a $\xi$-tuple $\operatorname L=(t_{\chi+1},\dots,t_{\chi+ \xi})\in \mathfrak{A}([n], \xi)$ and a $\chi$-tuple $\operatorname Q=(t_1,\dots,t_\chi)\in \mathfrak{A}([n],\chi)$ with $\operatorname L\cap \operatorname Q=\emptyset$ (i.e., the coordinates of $\operatorname L$ are disjoint from the coordinates of $\operatorname Q$), we let $  L =\{v_i:i\in \operatorname L\}$, $Q =\{v_i:i\in \operatorname Q\}$ and define
\begin{equation}\label{eq:def-TsimT'}
\{L\Join_{G}   Q\} = \{G_{t_i,t_j}= 1 \mbox{ for all } (i,j)\in E(\mathbf T)\}\,.
\end{equation}
 Let $\{L\not\Join_{G}Q\}$ be the complement of $\{L\Join_{G}  Q\}$. In addition, similar notations of $\mathsf L$, $\mathsf Q$, $\{\mathsf L\Join_{\Gs}\mathsf Q\},\{\mathsf L\not\Join_{\Gs}\mathsf Q\}$ apply for the graph $\Gs$, where the $G_{i, j}$'s in $\eqref{eq:def-TsimT'}$ are replaced by corresponding $\mathsf G_{i, j}$'s.
For any $\xi$-tuple $\operatorname L\subset \mathfrak{A}([n], \xi)$ and any subset $\operatorname U\subset [n]$, let $\{L\Join_{G} U\}$ be the event that $\{L\Join_{G} Q\}$ occurs for at least one $\chi$-tuple $\operatorname  Q\subset \mathfrak{A}(\operatorname U\setminus\operatorname L,\chi)$ where  $U = \{v_i:i\in \operatorname U\}$.  Further, for $r\in \operatorname U$, let $\{L\Join_{G,r} U\}$ denote the event that there exists $\operatorname Q = (t_1, \ldots, t_{\chi}) \in \mathfrak A(\operatorname U \setminus \operatorname L, \chi)$ such that the event
 $\{L\Join_{G}Q\}$ holds and $t_1 = r$. Similar notations of $\{\mathsf L\Join_\Gs \mathsf U\},\{\mathsf L\Join_{\Gs,r} \mathsf U\}$ apply for $\Gs$. Moreover, we define a mapping $\Pi = \Pi_\pi: [n]\to\mathsf V$ corresponding to $\pi$ such that $\Pi(i)=\mathsf v_{\pi(i)}$, and we define a mapping $I_{G}:[n]\to V$ by $I_{G}(i)= v_i$ (we also define $I_{\Gs}$ similarly).
 
Now we are ready to describe our matching algorithm. Roughly speaking, our algorithm proceeds iteratively in the following greedy sense: in the $s$-th step, assuming we have already matched a set $\operatorname M_{s-1}$ (i.e., we have determined the value of $\pi^*(i)$  for $i\in \operatorname M_{s-1}$), we then pick some $u\in \operatorname R_{s-1}=[n]\setminus \operatorname M_{s-1}$ and try to find a triple of tuples $(L,Q,\mathsf Q)$ with
\begin{equation}
	\begin{split}
		  L&=\{(v_{t_{\chi+1}},\cdots,v_{t_{\chi+ \xi}}):\,(t_{\chi+1}\cdots,t_{\chi+ \xi}) \in \mathfrak A(\operatorname M_{s-1}, \xi)\},\\
		  Q&=\{(v_{t_{1}},\cdots, v_{t_{\chi}}):\,(t_1,\dots,t_\chi)\in \mathfrak A(\operatorname R_{s-1},\chi)\},\\
		\mathsf Q&=\{(\mathsf v_{t_1'},\cdots,\mathsf v_{t_\chi'}):\,(t_1',\dots,t_\chi')\in \mathfrak A([n]\setminus \pi^*(\operatorname M_{s-1}),\chi)\}\,.
	\end{split}\notag
\end{equation}
such that $t_1=u$ and both $\{L\Join_{G}  Q\}$ and  $\{\Pi\left( I_{G}^{-1}(L)\right)\Join_{\Gs} \mathsf Q\}$ happen. If such $L,Q,\mathsf Q$ exist, we let $\pi^*(t_j)=t_j'$ and $\Pi(t_i)=\mathsf v_{t_j'}$ for $1\le j\le \chi$ (that is, we determine the values of $\pi^*$ at $t_1, \ldots, t_{\chi}$ in this step); else we just choose the value $\pi^*(u)$ arbitrarily (that is, we just determine the value of $\pi^*$ at $u$ in an arbitrary manner). This completes the construction for the $s$-th step. We expect that in most steps we are able to find such triples and thus the increment for the overlap is at least $\zeta$. Since there are around ${n}/{\chi}$ steps in total, this would imply that at the end of the procedure $\mathrm{Overlap}(\pi^*)\approx {\zeta}n/\chi \approx \frac{n}{2\alpha-1}$, as desired.

While the above algorithm may indeed achieve our goal,  we will further incorporate the following technical operations in order to facilitate the analysis of the algorithm:
\begin{itemize}
\item  Fixing a large integer $\kappa_0$ with $\kappa_0>4\zeta/\eta$, in each step we will first exclude the vertices which have been ``used'' for at least $\kappa_0$ times. 
\item In each step, when seeking the desired triple $(L,Q,\mathsf Q)$, we will check the tuples in the order given by $\prec$. 
\end{itemize}

We next present our full algorithm formally. Initially, we set $\pi^*(i)=i$ for all $1\le i\le \eta n$ and let $\operatorname M_0=\{1,2,\dots,\lfloor \eta n\rfloor\}$. Let $\operatorname R_0=[n]\setminus \operatorname M_0$. 
Set $\operatorname {EXP}_0= \operatorname {SUC}_0= \operatorname {FAIL}_0=\emptyset$. As we will see in the formal definition below, we will define $\operatorname {EXP}_s= \operatorname {SUC}_s\cup \operatorname {FAIL}_s\subset \mathfrak A([n], \xi)$ to be the set of $\xi$-tuples which are explored during the $s$-th step, where $ \operatorname {SUC}_s$ (respectively $ \operatorname {FAIL}_s$) denotes the collection of tuples which were successfully matched (respectively, failed to be matched). Our algorithm then proceeds as follows:
\begin{breakablealgorithm}\label{Alg}
			\label{algo:greedy}
			\caption{Greedy Matching Algorithm}
	\begin{algorithmic}[1]
		\STATE Define $\pi^*(i),1\le i\le \eta n$ and  $\operatorname M_0,\operatorname R_0, \operatorname {EXP}_0, \operatorname {SUC}_0, \operatorname {FAIL}_0$ as above.
		\FOR{$s=1,2,\dots$}
		\IF{$|\operatorname R_{s-1}|\ge \eta n$}
		\STATE Set $I_s=0$; set a triple $ \operatorname {MT}_s= \mathtt{null}$; set $\mathtt M_s=\operatorname M_{s-1}$.
		\FORALL{$u\in \operatorname M_{s-1}$}
		\IF{$u$ appears in at least $\kappa_0$ tuples in $\{\operatorname {MT}_t: 1\le t\le s-1\}$}
		\STATE Delete $u$ from $\mathtt M_s$.
		\ENDIF
		\ENDFOR
		\STATE Set $\operatorname {EXP}_{s}=\operatorname {SUC}_{s}=\operatorname {FAIL}_{s}=\emptyset$.
		\STATE Find the minimal element $u_s\in \operatorname R_{s-1}$.
		\STATE Let $\operatorname {CAND}_s$  be the collection of $\operatorname L\in\mathfrak A(\mathtt M_s, \xi)$ so that $\{I_{G}(\operatorname L)\Join_{G,u_s} \operatorname R_{s-1}\}$ holds.
		\STATE Label elements in $\operatorname {CAND}_s$ by $\operatorname L_1,\dots,\operatorname L_l$ such that $\operatorname L_1\prec \cdots\prec \operatorname L_l$.
		\STATE Label all  $\chi$-tuples in $\mathfrak A([n]\setminus \pi^*(\operatorname M_{s-1}),\chi)$ by $\operatorname Q_1,\dots,\operatorname Q_m$ so that $\operatorname Q_1\prec\cdots\prec \operatorname Q_m$.
		\FOR{$i=1,2,\dots,l$}
		\FOR{$j=1,2,\dots,m$}
		\STATE Check whether $\{\Pi(\operatorname L_i)\Join_{\Gs} I_\Gs(\operatorname Q_j)\}$ happens where $\Pi = \Pi_{\pi^*}$.
		\IF{$\{\Pi(\operatorname L_i)\Join_{\Gs} I_\Gs(\operatorname Q_j)\}$ happens}
		\STATE  Find a $\chi$-tuple $\operatorname Q\in \mathfrak A(\operatorname R_{s-1},\chi)$ such that $\{I_{G}(\operatorname L_i)\Join_{G, u_s} I_{G}(\operatorname Q)\}$ holds. \COMMENT{The existence of $\operatorname Q$ is guaranteed by definition of $\operatorname L_i$.}
		\STATE Set $I_s=1$ and $\operatorname {MT}_s=(\operatorname L_i,\operatorname Q,\operatorname Q_j)$.
		\STATE Add $\operatorname L_i$ into $\operatorname {SUC}_{s}$. Then break the \textbf{for} cycle, and break the \textbf{for} cycle again.
		\ELSIF{$j=m$} 
		\STATE Add $\operatorname L_i$ into $\operatorname {FAIL}_{s}$. \COMMENT{This means we have checked all possible tuples $\operatorname Q_j,j=1,2,\dots,m$ but failed to find the desired one.}
		\ENDIF
		\ENDFOR
		\ENDFOR
		\STATE Set $\operatorname {EXP}_{s}=\operatorname {SUC}_{s}\cup\operatorname {FAIL}_{s}$.
		\IF{$I_s=1$}
		\STATE  Recall $\operatorname {MT}_s=(\operatorname L_i,\operatorname Q, \operatorname Q_j)$. Set $\pi^*$ on coordinates of $\operatorname Q$ so that $\pi^*(\operatorname Q) = \operatorname Q_j$.
		\STATE Set $\operatorname M_{s}$ as the union of $\operatorname M_{s-1}$ and all coordinates in $\operatorname Q$; set $\operatorname R_{s}=[n]\setminus \operatorname M_{s}$.
		\ELSE 
		\STATE Set $\pi^*(u_s)$ as the minimal element in $[n]\setminus\pi^*(\operatorname M_{s-1})$.
		\STATE Set $\operatorname M_{s}=\operatorname M_{s-1}\cup \{u_s\},\operatorname R_{s}=[n]\setminus \operatorname M_{s}$.
		\ENDIF
		\ELSE
		\STATE set $\pi^*(u)$ for $u\in \operatorname R_s$ arbitrarily such that $\pi^*$ becomes a permutation on $[n]$.
		\STATE Break the \textbf{for} cycle.
		\ENDIF
		\ENDFOR 
		\RETURN $\pi^*$.
	\end{algorithmic}
\end{breakablealgorithm}
\begin{remark}
	$\operatorname{CAND}_s$ stands for the candidate tuples which may be successfully matched in the $s$-th step, while $\operatorname{EXP}_s$ denotes for the set of tuples in $\operatorname{CAND}_s$ that have been checked during the algorithm.  In most part of this paper, we do not distinguish $\operatorname{CAND}_s$ and $\operatorname{EXP}_s$, and they are indeed equal when $I_s=0$ (i.e., we have checked all candidate tuples but failed to match up). However, one should keep in mind that typically it holds $|\operatorname{EXP}_{s}|\ll |\operatorname{CAND}_s|$. Heuristically, this is because the checking procedure is according to a uniform ordering $\prec$ and it stops as long as any successful matching triple is found. We shall make this precise and incorporate it as an ingredient for proofs in Section~\ref{sec:typical-good-event}. 
\end{remark}

\subsection{Aanalysis of the algorithm}\label{subsec:algo-analysis}
In this subsection we prove that Algorithm~\ref{algo:greedy} satisfies the condition of Theorem~\ref{thm:main}. To this end, we need to analyze the conditional probability for the event $\{I_s=1\}$ given the behavior of Algorithm~\ref{algo:greedy} in previous steps. For notation convenience, in what follows we write $\pi = \pi^*$ and write $\operatorname R'_s = [n] \setminus \pi(\operatorname M_s)$. Since in each step we always have $|\operatorname R_{s+1}|\geq |\operatorname R_s|-\chi$, the algorithm runs for at least $S=\lfloor \frac{1-2\eta}{\chi}n\rfloor$ steps. We will only consider the first $S$ steps. For each $1\le s\le S$, define $\mathcal{F}_{s-1}$ to be the $\sigma$-field generated by $\operatorname M_t, \operatorname {SUC}_t, \operatorname {FAIL}_t, \operatorname {CAND}_t, I_t,  \operatorname {MT}_t$ for $t = 1, \ldots, s-1$ as well as $\pi(i)$ for $i\in\operatorname M_{s-1}$ (here we denote the matching triples in $\operatorname {MT}_t$ by $(\operatorname L_t,\operatorname Q_t,\operatorname Q'_t)$ if $I_t=1$, and denote $\operatorname {MT}_t=\mathtt{null}$ if $I_t=0$). 
Then $\mathcal{F}_{s-1}$ contains all the information generated by Algorithm~\ref{algo:greedy} in the first $s-1$ steps. Thus conditioning on a realization of $\mathcal{F}_{s-1}$ is equivalent to conditioning on a realization of the first $s-1$ steps of Algorithm~\ref{algo:greedy}. We further denote $\mathcal{F}_{s-1/2}$ as the $\sigma$-field generated by $\mathcal{F}_{s-1}$ and $ \operatorname{CAND}_{s}$.

With slight abuse of notation, we will write $\Pb[\cdot\mid \mathcal{F}_{s-1}]$ (respectively $\Pb[\cdot\mid \mathcal{F}_{s-1/2}]$) for the conditional probability given some particular realization of $\mathcal{F}_{s-1}$ (respectively some realization of $\mathcal{F}_{s-1/2}$). Let
\begin{equation}
		\label{eq:Fail_s-1}
\begin{aligned}
	\mathsf{Fail}_{s-1}=& \bigcup_{1\le t\le s-1}\Big\{\big(\Pi(\operatorname L),I_\Gs(\operatorname Q')\big):\operatorname L\in\operatorname {FAIL}_{t},\,\operatorname Q'\in \mathfrak{A}(\operatorname R_t',\chi)\Big\}\\
\bigcup &\bigcup_{1\le t\le s-1,\atop I_t=1}\Big\{\big(\Pi (\operatorname L_{t}),I_\Gs(\operatorname Q')):\operatorname Q'\in \mathfrak{A}(\operatorname R_t',\chi),\operatorname Q'\prec \operatorname Q'_t\Big\}
\end{aligned}
\end{equation}
and
\begin{equation}
	\label{eq:Suc_s-1}\text{Suc}_{s-1}=\bigcup_{1\le t\le s-1,\atop I_t=1}\{(I_G(\operatorname{L}_t),I_G(\operatorname{Q}_t))\}\,,\quad
	\mathsf{Suc}_{s-1}= \bigcup_{1\le t\le s-1,\atop I_t=1}\{(\Pi(\operatorname{L}_t),I_{\mathsf G}(\operatorname{Q}_t'))\}\,.
\end{equation}
Since the two graphs are independent, we have that for any event $\mathcal B$ measurable with respect to  $\mathsf G$,
$$
\Pb[\mathcal B\mid \mathcal{F}_{s-1/2}]=\Pb[\mathcal B\mid \mathcal{F}_{s-1}]=\Pb[\mathcal B\mid \mathcal A_s^1,\mathcal A_s^2]
$$
with
\begin{equation}
	\label{eq:def-of-As}
	\mathcal A_s^1=\bigcap_{(\mathsf L,\mathsf Q)\in \mathsf{Fail}_{s-1}}\{\mathsf L\not\Join_\Gs \mathsf Q\}\,,\quad \mathcal A_s^2=\bigcap_{(\mathsf L,\mathsf Q)\in \mathsf{Suc}_{s-1}}\{\mathsf L\Join_{\Gs} \mathsf Q\}\,.
\end{equation}

Now we investigate the event $\{I_s=0\}$ conditioned on some realization of $\mathcal{F}_{s-1}$.
As in the algorithm, we first pick $u_s\in \operatorname R_{s-1}$ and find all $\xi$-tuples $\operatorname{L}\in  \operatorname {CAND}_s$ (recall that $I_G(\operatorname{L})\Join_{G,u_s} R_{s-1}$ for $\operatorname{L}\in  \operatorname {CAND}_s$). Note that whenever $\mathcal{F}_{s-1}$ is given, this procedure is measurable with respect to $G$. Provided with $\operatorname {CAND}_s$, we have that $I_s=0$ is equivalent to
\[
\sum_{\operatorname L \in  \operatorname {CAND}_s} \mathbf{1}_{\{\Pi(\operatorname L)\Join_\Gs \mathsf R_{s-1}\}}=0 \mbox{ where } \mathsf R_{s-1}=I_\Gs(\operatorname R_{s-1}')\,.
\]
Since our aim is to lower-bound the preceding sum, it turns out more convenient to consider the sum over tuples with additional properties. To this end, we will define certain $s$-good tuples ($s$-good is measurable with respect to $\mathcal F_{s-1}$; see Definition~\ref{def:s-good} below) and consider
\begin{equation}
	\label{def:X_s}
	X_s\stackrel{\operatorname {def}}{=}\sum_{\operatorname L \in  \operatorname {CAND}_s\atop \operatorname{L}\text{ is }s\text{-good}} \mathbf{1}_{\{\Pi(\operatorname L)\Join_\Gs \mathsf R_{s-1}\}}\,.
\end{equation}
Clearly, $\{I_s=0\} \subset \{X_s = 0\}$. 

For any $1\le s\le S$, we shall define two good events $\mathcal G_s^i,i=1,2$, where $\mathcal G_s^1$ is measurable with respect to $\mathcal{F}_{s-1}$ and $\Gc_s^2$ is measurable with respect to $\mathcal{F}_{s-1/2}$. The precise definitions of $\Gc_s^1,\Gc_s^2$ will be given in the next section. Roughly speaking, $\mathcal G_s^1$ states that the cardinality of $\operatorname {Fail}_{s-1}$ is not unusually large, while $\Gc_s^2$ says that the number of $s$-good tuples in $\operatorname {CAND}_s$ is not too small and the intersecting profile for pairs of $s$-good tuples in $\operatorname {CAND}_s$ behaves in a typical way. 
 
The following three propositions, whose proofs are postponed until Section~\ref{sec:complimentary-proof}, are key ingredients for the proof of Theorem~\ref{thm:main}.
\begin{proposition}
	\label{prop:one-point-est}
	For  $1\le s\le S$, any realization of $\mathcal{F}_{s-1}$ and any $\xi$-tuple $\operatorname L\in \mathfrak{A}(\operatorname{M}_{s-1},\xi)$,
	\begin{equation}
		\label{eq:one-point-upper-bound}
		\Pb[\Pi(\operatorname L)\Join_\Gs \mathsf R_{s-1}\mid \mathcal A_s^1,\mathcal A_s^2]\le \Pb[\Pi(\operatorname L)\Join_\Gs \mathsf R_{s-1}]\,.
	\end{equation}
	Furthermore, for any realization on the good event $\Gc_s^1$ and $s$-good $\xi$-tuple $\operatorname{L}\in \mathfrak{A}(\operatorname{M}_{s-1},\xi)$, it holds uniformly that
	\begin{equation}
		\label{eq:one-point-lower-bound}
			\Pb[\Pi(\operatorname L)\Join_\Gs \mathsf R_{s-1}\mid \mathcal A_s^1,\mathcal A_s^2]\ge\big[1-o(1)\big] \Pb[\Pi(\operatorname L)\Join_\Gs \mathsf R_{s-1}]\,.
	\end{equation}
	In addition, under such additional assumptions, for some positive constants $c_1, c_2$ depending only on $\eta, \alpha_\eta$ and $\mathbf T$, we have 
	\begin{equation}
		\label{eq:one-point-est}
		c_1n^\chi p_\eta^\zeta\le\Pb[\Pi(\operatorname L)\Join_\Gs \mathsf R_{s-1}\mid \mathcal A_s^1,\mathcal A_s^2]\le c_2n^\chi p_\eta^\zeta\,.
	\end{equation}
\end{proposition}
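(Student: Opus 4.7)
The proposition bundles three claims: the FKG-type upper bound \eqref{eq:one-point-upper-bound}, the matching lower bound \eqref{eq:one-point-lower-bound} valid on the good event $\Gc_s^1$ and for $s$-good tuples, and the two-sided asymptotic \eqref{eq:one-point-est}. My plan rests on a clean edge decomposition of $\Gs$: call an edge \emph{matched-matched} if both endpoints lie in $\pi(\operatorname M_{s-1})$, and \emph{free} otherwise. Each previously successful tree has leaves $\Pi(\operatorname L_t)$ matched before step $t$ and non-leaves $I_\Gs(\operatorname Q'_t)$ matched during step $t$, so $\mathcal A_s^2$ is measurable with respect to matched-matched edges only. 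Conversely, any tree extension witnessing $A:=\{\Pi(\operatorname L)\Join_\Gs \mathsf R_{s-1}\}$ uses a non-leaf tuple $\mathsf Q\subset \mathsf R_{s-1}$ and so every tree edge is incident to at least one vertex of $\mathsf Q$, whence $A$ is measurable with respect to free edges only. Thus $A$ and $\mathcal A_s^2$ are independent, $\Pb[A\mid \mathcal A_s^2]=\Pb[A]$, and conditioning on $\mathcal A_s^2$ leaves the law of the free edges intact.

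Given this, \eqref{eq:one-point-upper-bound} is immediate from FKG: under $\Pb[\cdot\mid \mathcal A_s^2]$ the free edges are i.i.d. Bernoulli$(p_\eta)$, the target $A$ is increasing in them, and $\mathcal A_s^1=\bigcap\{\mathsf L\not\Join_\Gs \mathsf Q\}$ is decreasing in them (each failure constraint forbids at least one tree edge from being present), so $\Pb[A\mid \mathcal A_s^1,\mathcal A_s^2]\le \Pb[A\mid \mathcal A_s^2]=\Pb[A]$.

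For the lower bound \eqref{eq:one-point-lower-bound} I would write
\begin{equation*}
\Pb[A\mid \mathcal A_s^1,\mathcal A_s^2]=\frac{\Pb[A\mid \mathcal A_s^2]-\Pb[A\cap(\mathcal A_s^1)^c\mid \mathcal A_s^2]}{1-\Pb[(\mathcal A_s^1)^c\mid \mathcal A_s^2]}
\end{equation*}
and aim to prove, on $\Gc_s^1$ and for every $s$-good $\operatorname L$, the two estimates (i) $\Pb[(\mathcal A_s^1)^c\mid \mathcal A_s^2]=o(1)$ and (ii) $\Pb[A\cap(\mathcal A_s^1)^c\mid \mathcal A_s^2]=o(\Pb[A])$. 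Both proceed by a union bound over failure pairs $(\mathsf L,\mathsf Q)\in\mathsf{Fail}_{s-1}$. Estimate (i) uses the single-event bound $\Pb[\mathsf L\Join_\Gs \mathsf Q\mid \mathcal A_s^2]\le p_\eta^\zeta$ together with the cap on $|\mathsf{Fail}_{s-1}|$ built into $\Gc_s^1$. Estimate (ii) requires the joint two-copy bound $\sum_{\mathsf Q'\in\mathfrak A(\mathsf R_{s-1},\chi)}\Pb[\{\Pi(\operatorname L)\Join_\Gs \mathsf Q'\}\cap\{\mathsf L\Join_\Gs \mathsf Q\}\mid \mathcal A_s^2]$ to beat $\Pb[A]\asymp n^\chi p_\eta^\zeta$; this sum is stratified by how many edges the two tree copies share. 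The balanced condition \eqref{eq:balanced} from Lemma~\ref{lem:designing-tree} controls each generic stratum, while the $s$-good condition is tailored precisely to exclude tuples $\operatorname L$ whose neighborhood interacts pathologically with $\mathsf{Fail}_{s-1}$.

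Finally, \eqref{eq:one-point-est} is a direct moment computation applied to $Z:=\sum_{\mathsf Q\in\mathfrak A(\mathsf R_{s-1},\chi)}\mathbf 1_{\{\Pi(\operatorname L)\Join_\Gs \mathsf Q\}}$. Since $|\mathsf R_{s-1}|\ge \eta n$, one has $\E Z\asymp n^\chi p_\eta^\zeta$; Markov gives $\Pb[A]\le \E Z$, and combined with \eqref{eq:one-point-upper-bound} this yields the upper half of \eqref{eq:one-point-est}. For the lower half I use Paley--Zygmund, $\Pb[Z\ge 1]\ge (\E Z)^2/\E Z^2$, and expand $\E Z^2$ by the intersection type of two tree copies sharing the common leaf set $\Pi(\operatorname L)$: the balanced condition \eqref{eq:balanced} guarantees that every intermediate overlap contributes at most $O(\E Z)$, producing $\E Z^2\lesssim (\E Z)^2+\E Z$ and hence $\Pb[A]\gtrsim \min(\E Z,1)\asymp n^\chi p_\eta^\zeta$. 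Combined with \eqref{eq:one-point-lower-bound}, this yields \eqref{eq:one-point-est}. The main obstacle is step (ii) above: producing a uniform two-copy estimate against every failure pair in $\mathsf{Fail}_{s-1}$ is what dictates the delicate form of both $\Gc_s^1$ and the $s$-good condition.
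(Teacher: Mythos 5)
Your reduction to the independence of $A:=\{\Pi(\operatorname L)\Join_\Gs \mathsf R_{s-1}\}$ and $\mathcal A_s^2$ (via the matched-matched/free edge partition) is correct and is exactly what the paper uses via~\eqref{eq:O_s-location}, and your FKG argument for the upper bound~\eqref{eq:one-point-upper-bound} matches the paper's.

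The lower bound, however, has a genuine gap. You plan to prove (i) $\Pb[(\mathcal A_s^1)^c\mid\mathcal A_s^2]=o(1)$ and (ii) $\Pb[A\cap(\mathcal A_s^1)^c\mid\mathcal A_s^2]=o(\Pb[A])$. Claim (i) is not available and in fact should be false: the set $\mathsf{Fail}_{s-1}$ from~\eqref{eq:Fail_s-1} contains, for each prior step $t$, all pairs $(\Pi(\operatorname L),I_\Gs(\operatorname Q'))$ over $\operatorname Q'\in\mathfrak A(\operatorname R_t',\chi)$ and $\operatorname L\in\operatorname{FAIL}_t$, and for a typical successful step $|\operatorname{FAIL}_t|\asymp n^{\alpha_\eta\zeta-\chi}$, so the expected number of freshly occurring violations per step (under $\widehat\Pb=\Pb[\cdot\mid\mathcal A_s^2]$) is of constant order $|\operatorname{FAIL}_t|\cdot n^\chi p_\eta^\zeta\asymp 1$. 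Summed over $\asymp n$ steps this makes $\widehat\Pb[\mathcal A_s^1]$ exponentially small, not $1-o(1)$; the good event $\Gc_s^1$ as defined in the paper (a bound on $\operatorname{Enum}(\mathbf F_1,\mathbf F_2)$) provides no cap of the kind you invoke on $|\mathsf{Fail}_{s-1}|$. Moreover, since FKG gives $\Pb[A\mid\mathcal A_s^1,\mathcal A_s^2]\le\Pb[A]$ unconditionally, if (ii) held while (i) failed your display would force $\Pb[A\mid\mathcal A_s^1,\mathcal A_s^2]>\Pb[A]$ — a contradiction — so (ii) is also not a viable target. What is actually needed, and what the paper proves as Lemma~\ref{lem:structure}, is a \emph{ratio} bound $\widehat\Pb[\mathcal A_s^1\mid \Pi(\operatorname L)\Join_\Gs I_\Gs(\operatorname Q)]\ge(1-o(1))\widehat\Pb[\mathcal A_s^1]$, which is meaningful even when $\widehat\Pb[\mathcal A_s^1]$ is tiny. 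The paper obtains this by pairing the decomposition over specific $\operatorname Q$ with the uniqueness event $\mathcal U$ (Lemma~\ref{lem:unique-is-whp}), applying FKG to factor out $\mathcal A_s^1$ per summand, and then comparing $\widehat\Pb[\mathcal A_s^1]$ before and after opening the edges of $\mathsf T$ through the $\mathscr A_s^0,\mathscr A_s^1$ construction in~\eqref{def:mathscrA} — this is the missing idea. The $s$-good notion and $\Gc_s^1$ are designed precisely to make this ratio estimate go through (controlling the event $\mathcal E_s$ in~\eqref{eq-def-mathcal-E-s}), not to bound $|\mathsf{Fail}_{s-1}|$ directly.

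Your Paley--Zygmund route to~\eqref{eq:one-point-est} is fine in spirit and is essentially equivalent to the paper's use of Lemma~\ref{lem:unique-is-whp} (whose union bound~\eqref{eq-unique-is-typical} is the same second-moment computation under the balanced condition~\eqref{eq:balanced}), but it rests on~\eqref{eq:one-point-lower-bound}, so it inherits the gap above.
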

\begin{proposition}
	\label{prop:two-point-est}
	For any realization of $\mathcal{F}_{s-1/2}$ on $\Gc_s^1 \cap \Gc_s^2$, it holds uniformly for $1\le s \le S$ that 
	\begin{equation}
		\label{eq:second-moment-bound}
		\begin{aligned}
			\mathbb{E}\left[X_s^2\mid \mathcal{F}_{s-1/2}\right]\le \big[1+o(1)\big]\big(\mathbb{E}\left[X_s\mid \mathcal{F}_{s-1/2}\right]\big)^2\,.
		\end{aligned}
	\end{equation}
\end{proposition}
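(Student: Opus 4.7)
The plan is to expand $\mathbb{E}[X_s^2\mid\mathcal F_{s-1/2}]$ as a double sum over pairs of $s$-good tuples in $\operatorname{CAND}_s$, decompose by intersection type, and show that the ``essentially disjoint'' contribution already reproduces the square of the first moment up to a factor $(1+o(1))$, while every non-trivial merging of the witnessed trees is negligible thanks to the balanced conditions of Lemma~\ref{lem:designing-tree}.

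Specifically, I would write
\[
\mathbb{E}[X_s^2\mid\mathcal F_{s-1/2}] = \sum_{\operatorname L_1,\operatorname L_2}\Pb\big[\Pi(\operatorname L_1)\Join_\Gs\mathsf R_{s-1},\,\Pi(\operatorname L_2)\Join_\Gs\mathsf R_{s-1}\,\big|\,\mathcal A_s^1,\mathcal A_s^2\big]
\]
(summing over $s$-good tuples in $\operatorname{CAND}_s$), and then union-bound each existence event by a sum over witness $\chi$-tuples $\operatorname Q_i\in\mathfrak A(\operatorname R_{s-1}',\chi)$. For each resulting quadruple $(\operatorname L_i,\operatorname Q_i)_{i=1,2}$ I form the ``merged'' labeled graph $\mathbf H$ obtained by gluing the two copies of $\mathbf T$ along their shared vertices, and classify terms by $\mathbf H$. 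When $\operatorname L_1\cap\operatorname L_2=\emptyset$ and $\operatorname Q_1\cap\operatorname Q_2=\emptyset$, the two tree events are edge-disjoint; using both the upper and lower bounds of Proposition~\ref{prop:one-point-est} together with the intersection-profile control provided by $\Gc_s^2$, the contribution of this fully disjoint part is at most $[1+o(1)](\mathbb{E}[X_s\mid\mathcal F_{s-1/2}])^2$.

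For every non-trivial merging I would use a first-moment count. If $\mathbf H$ is the union of two copies of $\mathbf T$ glued along a shared subgraph $\mathbf F$, the number of quadruples realising this type is at most $n^{v(\mathbf H)-|\operatorname L_1\cup\operatorname L_2|}$ times the number of $s$-good leaf pairs with the prescribed overlap (bounded under $\Gc_s^2$), and the joint tree probability is of order $p_\eta^{e(\mathbf H)}$ with $v(\mathbf H)=2(\chi+\xi)-|V(\mathbf F)|$ and $e(\mathbf H)=2\zeta-|E(\mathbf F)|$. Thus the contribution of this type relative to the disjoint one is suppressed by a factor of order $n^{|V(\mathbf F)|-\alpha_\eta|E(\mathbf F)|}$. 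Condition \eqref{eq:balanced} forces this factor to be at most $n^{-c}$ for some constant $c>0$ whenever $\mathbf F$ is a proper subgraph containing $\mathbf L$ (i.e., when the leaf parts are shared and part of the tree is glued), while \eqref{eq:balanced-2} handles the refinement where $\mathbf F$ is a subtree sitting in the non-leaf part (i.e., when the two copies share an internal sub-configuration). Summing over the (constantly many) topological types of $\mathbf F$ yields the desired $o(1)(\mathbb{E}[X_s\mid\mathcal F_{s-1/2}])^2$ bound.

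The hard part will be handling the conditional measure $\Pb[\cdot\mid\mathcal A_s^1,\mathcal A_s^2]$ uniformly across all merging types. The conditioning involves polynomially many negative events from $\mathsf{Fail}_{s-1}$ and positive events from $\mathsf{Suc}_{s-1}$, some of which may share vertices with both $(\operatorname L_1,\operatorname Q_1)$ and $(\operatorname L_2,\operatorname Q_2)$; the one-point bound of Proposition~\ref{prop:one-point-est} therefore has to be upgraded to a two-point statement. Making this precise will rely on the vertex-reuse cap $\kappa_0$ built into Algorithm~\ref{algo:greedy} and the bound on $|\operatorname{Fail}_{s-1}|$ guaranteed by $\Gc_s^1$, together with a correlation inequality of FKG-type for the failure direction and a careful inclusion--exclusion for the success direction, to show that the conditioning correction remains within a $1+o(1)$ factor uniformly in the merging structure. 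I expect this uniform two-point conditioning estimate, rather than the combinatorics of merged trees, to be the main technical obstacle.
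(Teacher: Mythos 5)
Your overall strategy is the same as the paper's: expand $\mathbb{E}[X_s^2\mid\mathcal F_{s-1/2}]$ as a double sum over $s$-good tuples in $\operatorname{CAND}_s$, split by intersection pattern, argue that the essentially-disjoint part reproduces the square of the first moment, and kill the merged contributions using the balanced conditions \eqref{eq:balanced}--\eqref{eq:balanced-2} together with the quantitative control on intersection profiles built into $\Gc_s^2$. That much matches.

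The one place you are at a genuine disadvantage is exactly where you flag the ``main technical obstacle'': the conditional measure $\Pb[\cdot\mid\mathcal A_s^1,\mathcal A_s^2]$. You plan to upgrade the one-point bound of Proposition~\ref{prop:one-point-est} to a two-point conditional statement via a mix of FKG (for the failure direction) and inclusion--exclusion (for the success direction), uniformly over merging types. The paper avoids this entirely. The observation is that the joint event $\{\mathsf L_i\Join_\Gs\mathsf R_{s-1}\}\cap\{\mathsf L_j\Join_\Gs\mathsf R_{s-1}\}$ is increasing, $\mathcal A_s^1$ is decreasing, and $\widehat{\Pb}=\Pb[\cdot\mid\mathcal A_s^2]$ is still a product measure, so a single application of FKG gives
\[
\Pb\big[\mathsf L_i\Join_\Gs\mathsf R_{s-1},\,\mathsf L_j\Join_\Gs\mathsf R_{s-1}\mid\mathcal F_{s-1}\big]
=\widehat{\Pb}\big[\cdots\mid\mathcal A_s^1\big]
\le\widehat{\Pb}\big[\cdots\big]
=\Pb\big[\mathsf L_i\Join_\Gs\mathsf R_{s-1},\,\mathsf L_j\Join_\Gs\mathsf R_{s-1}\big]\,,
\]
the last step by independence since the events only involve $\mathsf R_{s-1}$. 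Thus for the \emph{upper} bound on the second moment the conditioning disappears before the combinatorics even starts; the conditional probabilities only re-enter on the right-hand side of Lemma~\ref{lem:est-for-disjoint} through the one-point \emph{lower} bound \eqref{eq:one-point-lower-bound}, which is already proved on $\Gc_s^1$. So the ``uniform two-point conditioning estimate'' you anticipate is not needed, and pursuing it would be substantial wasted effort.

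Two smaller remarks. First, the paper indexes the decomposition by the shared leaf set $\mathbf R\subset\mathbf L$ (via $\operatorname{IP}_s(\mathbf R)$), not by the entire merged graph $\mathbf H$; within a fixed $\mathbf R$, the further choice of shared intersection $\mathbf F$ is handled inside Lemma~\ref{lem:est-for-intersecting} by passing to $\mathbf F_{\mathbf R}$ that minimizes $\operatorname{Cap}$, and the count of pairs with shared leaf set $\mathbf R$ is controlled by item (ii) of $\Gc_s^2$ through $\mathbb D(\mathbf{Span}(\mathbf R))$. This is cleaner than carrying the full labeled $\mathbf H$ around. Second, your stated suppression factor $n^{|V(\mathbf F)|-\alpha_\eta|E(\mathbf F)|}$ is not quite the right quantity: the relevant exponent is governed by $\operatorname{Cap}(\mathbf F)=|V(\mathbf F)\cap\mathbf Q|-\alpha_\eta|E(\mathbf F)|$, because shared leaves are enumerated separately via $\operatorname{IP}_s(\mathbf R)$ and only shared non-leaf vertices (and shared edges) produce a combinatorial/probabilistic saving. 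In the paper's bookkeeping the final cross term per $\mathbf R$ is $\big(n^{\chi+\zeta}p_\eta^{2\zeta}\big)^2\cdot n^{-|E(\mathbf{Span}(\mathbf R))|+|V(\mathbf{Span}(\mathbf R))\cap\mathbf Q|}\cdot n^{-2\operatorname{Cap}(\mathbf F_{\mathbf R})}$, and it is Lemma~\ref{lem:designing-tree}(iv) that turns this into $o(1)$ times the disjoint contribution. Your \eqref{eq:balanced} plays a role in the one-point uniqueness lemma, not directly here.
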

\begin{proposition}
	\label{prop:good-events-are-typical}
	The good events are typical, i.e. for $1\le s\le S$, it holds uniformly that 
	\begin{equation}
		\label{eq:second-moment-est}
			\Pb\big[\Gc_s^1\big]=1-o(1)\,,\quad \Pb\big[\Gc_s^2\big]=1-o(1)\,. 
	\end{equation}
\end{proposition}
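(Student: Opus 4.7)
The plan is to treat $\mathcal{G}_s^1$ and $\mathcal{G}_s^2$ separately, with a moment-method flavor on each, and then take a final union bound over $s\leq S$ to get uniformity in $s$.

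For $\mathcal{G}_s^1$, which lives on $\mathcal{F}_{s-1}$ and asserts that the cumulative set $\mathsf{Fail}_{s-1}$ from \eqref{eq:Fail_s-1} is not abnormally large, I would directly control $\sum_{t<s}|\operatorname{FAIL}_t|$ together with the ``skipped'' tuples recorded from the successful steps. The key input is that by the conditional lower bound in Proposition~\ref{prop:one-point-est}, for any $s$-good candidate tuple $\operatorname L\in\operatorname{CAND}_t$ the probability of $\{\Pi(\operatorname L)\Join_{\Gs}\mathsf R_{t-1}\}$ is of order $n^\chi p_\eta^\zeta$; combined with the fact that $\prec$ is a uniform ordering independent of $\Gs$, an averaging argument shows that in expectation only a $(1-\Omega(n^\chi p_\eta^\zeta))$-fraction of the tuples in $\operatorname{CAND}_t$ end up in $\operatorname{FAIL}_t$. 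A Chebyshev or Chernoff-type bound conditional on $\mathcal{F}_{t-1}$, followed by a union bound over $t\leq S\leq n$, gives the required cardinality bound on $\mathsf{Fail}_{s-1}$ with probability $1-o(1)$.

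For $\mathcal{G}_s^2$, which lives on $\mathcal{F}_{s-1/2}$ and is a statement about the graph $G$ alone, the plan is a standard second-moment argument. Let $Y_s$ denote the number of $s$-good tuples in $\operatorname{CAND}_s$, and let $Z_s^{(k)}$ count pairs of such tuples whose embedded trees share a specified intersection profile. The first moment $\E[Y_s\mid\mathcal{F}_{s-1}]$ is of order $n^{\xi+\chi-\alpha_\eta\zeta}$, which grows polynomially by Lemma~\ref{lem:designing-tree}(i). For the second moment I would classify pairs $(\operatorname L,\operatorname L')$ by the vertex and edge profile of the overlap between their supporting trees in $G$; the balanced conditions \eqref{eq:balanced} and \eqref{eq:balanced-2} then ensure that each nontrivial overlap profile contributes strictly less than $(\E Y_s)^2$, so Chebyshev yields the concentration needed for the ``not too small'' part of $\mathcal{G}_s^2$. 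Analogous bookkeeping for $Z_s^{(k)}$ controls the intersecting profile, and a final union bound over $s\leq S$ gives $\Pb[\mathcal{G}_s^2]=1-o(1)$.

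The main obstacle, and the step I expect to be most delicate, is that both moment computations are inherently \emph{conditional} on previously revealed edges of $G$ and $\Gs$, and one must verify that this conditioning does not destroy the leading-order estimates. The mechanism that prevents catastrophic accumulation of conditioning is the usage cap $\kappa_0$ in Algorithm~\ref{algo:greedy}: any vertex that has been used in $\kappa_0$ successful matching triples is removed from $\mathtt M_s$, so that for any fresh candidate tree assembled in step $s$, only $O(\chi\kappa_0)=O(1)$ of its vertices have had their incident edges previously probed. Hence the overwhelming majority of the $\zeta$ tree edges remain independent $\mathrm{Bernoulli}(p_\eta)$ variables and the $o(1)$ corrections from conditioning can be absorbed into the main terms $n^{\chi-\alpha_\eta\zeta}$. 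Making this precise uniformly over $s\leq S$, over all $\xi$-tuples, and over the polynomially many intersection profiles is where the careful technical bookkeeping of Section~\ref{sec:complimentary-proof} will concentrate.
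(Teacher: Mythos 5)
Your proposal misses several of the mechanisms that make the paper's argument work, and it would not close as written.

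First, the precise content of $\Gc_s^1$ (Definition~\ref{def:G-s-1}) is not a cardinality bound on $\mathsf{Fail}_{s-1}$; it is a structural bound on $\operatorname{Enum}(\mathbf F_1,\mathbf F_2)$, the number of $\xi$-tuples in $\bigcup_{t\le s-1}\operatorname{EXP}_t$ that embed in a prescribed way relative to $\mathsf T\cup\mathsf{GO}_{s-1}$. Controlling the size of $\mathsf{Fail}_{s-1}$ alone does not give this, because one must also track which tuples sit near which subtrees. The paper handles this not by a fraction-of-failures averaging argument but by showing $\Gc\cap\Gc_\kappa\subset\Gc_s^1$, where $\Gc$ (Lemma~\ref{lem:G-is-typical}) and $\Gc_\kappa$ (Lemma~\ref{lem:last-lemma}) are deterministic extension-count events on $G$. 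Your proposed averaging heuristic that a $(1-\Omega(n^\chi p_\eta^\zeta))$-fraction of $\operatorname{CAND}_t$ lands in $\operatorname{FAIL}_t$ also mismodels the algorithm: the checking loop stops at the first success, so either $\operatorname{FAIL}_t=\operatorname{CAND}_t$ (the rare failure event) or $|\operatorname{EXP}_t|\ll|\operatorname{CAND}_t|$; controlling $|\operatorname{EXP}_t|$ hinges on the fact that $\prec$ is a uniform ordering independent of $\Gs$ and on the lower bound $X_t\ge\log n$, which is Lemma~\ref{lem:good-steps} and requires the induction hypothesis.

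Second, your $\Gc_s^2$ argument sidesteps the genuinely hard step. You set $Y_s=|\operatorname{GC}_s|$ and invoke Chebyshev, but the bottleneck is bounding $|\operatorname{CAND}_s\setminus\operatorname{GC}_s|$ — the $s$-bad tuples — and this quantity is defined through the history $\bigcup_t\operatorname{EXP}_t$, which is not a function of $G$ alone and is not tamed by the $\kappa_0$ cap. You claim the cap ensures only $O(\chi\kappa_0)$ vertices of any new tree have had edges previously probed; in fact $\kappa_0$ only bounds $\Delta(GO_{s-1})$ (participation in successful triples, \eqref{eq-degree-bound}), while the sets $\operatorname{EXP}_t$, which define $s$-badness, are not controlled by the cap at all. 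The paper instead drops to the coarser $\sigma$-field $\mathcal{I}_{s-1}$, proves stochastic domination by an independent \ER graph (Lemma~\ref{lem:dominated-by-ER}) so that first-moment counts of candidate trees go through, and then bounds $\mathbb{E}|\operatorname{CAND}_s\setminus\operatorname{GC}_s|$ by a delicate union bound that splits according to whether the offending tuple $\operatorname{L}^*$ was successfully matched, whether the offending step $t^*$ is near $\operatorname L$ in $GO_{s-1}$, and whether $X_{t^*}\ge\log n$, the last two again requiring Lemma~\ref{lem:good-steps}.

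Third, your proof has no induction. Proposition~\ref{prop:good-events-are-typical} is proved by induction on $s$: the base case and the inductive step both need the prior steps' good events to conclude $\Pb[X_t<\log n]=o(1)$ (Lemma~\ref{lem:good-steps}), which in turn feeds into bounding the $s$-bad tuples. A one-shot second-moment argument at step $s$ cannot close this loop. Without (a) the reduction to $\mathcal I_{s-1}$ and stochastic domination, (b) exploiting the uniformly random $\prec$ to control $|\operatorname{EXP}_t|$, and (c) the inductive use of earlier good events via Lemma~\ref{lem:good-steps}, the conditioning issues you flag as "delicate" do in fact derail the computation.
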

\begin{proof}[Proof of Theorem~\ref{thm:main}]
 It is clear that this algorithm runs in polynomial time, so it remains to show \eqref{eq:approximation}. For each $1\le s\le S$, whenever the realization of $\mathcal{F}_{s-1/2}$ satisfies $\Gc_s^1\cap\Gc_s^2$, we can apply the second moment method and obtain that
	\begin{align*}
	    \Pb[I_s=1\mid \mathcal{F}_{s-1/2}]\ge\Pb[X_s>0\mid \mathcal{F}_{s-1/2}]\ge \frac{\left(\mathbb{E}[X_s\mid \mathcal{F}_{s-1/2}]\right)^2}{\mathbb{E}[X_s^2\mid\mathcal{F}_{s-1/2}]}\,.
	    \end{align*}
Thus, from Proposition~\ref{prop:two-point-est} we get $\Pb[I_s=0\mid \mathcal{F}_{s-1/2}]=o(1)$, uniformly for all $1\le s\le S$ and all realization of $\mathcal{F}_{s-1/2}$ satisfying $\Gc_s^1\cap \Gc_s^2$. Combined with Proposition~\ref{prop:good-events-are-typical}, it yields that
\begin{equation}
	\label{eq:sum-is-o(n)}
\begin{aligned}
	&\ \mathbb{E}\Big[\sum_{s=1}^S\mathbf{1}_{\{I_s=0\}}\Big] = \sum_{s=1}^S\Big(\mathbb{E}\big[\mathbf{1}_{\{I_s=0\}\cap (\Gc_s^1\cap \Gc_s^2)^c}\big]+\mathbb{E}\big[\mathbf{1}_{\{I_s=0\}\cap\Gc_s^1\cap\Gc_s^2}\big]\Big)\\
	\le &\ \sum_{s=1}^S\Big(\Pb\big[(\Gc_s^1\cap \Gc_s^2)^c\big]+\mathbb{E}\big[\Pb[I_s=0\mid \mathcal{F}_{s-1/2}]\mathbf{1}_{\Gc_s^1\cap \Gc_s^2}\big]\Big)=o(n)\,.
\end{aligned}
\end{equation}
Define the increment of the target quantity $\mathrm{Overlap}(\pi)$ in the $s$-th step by
\begin{equation*}
	\mathcal O_s=\sum_{i<j, (i,j)\in \mathfrak{A}(\operatorname M_{s+1},2)\setminus \mathfrak{A}(\operatorname M_s,2)} G_{i,j}\mathsf G_{\pi(i),\pi(j)}\,.
\end{equation*}
 Then  $\{I_s= 1\}  \subset \{\mathcal O_s\ge \zeta\}$ for all  $1\le i\le S$. Therefore,
\begin{align*}
\mathrm{Overlap}(\pi)\ge&\ \sum_{s=1}^S \mathcal O_s\ge \zeta\sum_{s=1}^S \mathbf{1}_{\{I_s=1\}}=\zeta S-\zeta\sum_{s=1}^S\mathbf{1}_{\{I_s=0\}}\\
\ge&\ \frac{1-2\eta-o(1)}{2\alpha_\eta-1}n-\zeta\sum_{s=1}^S\mathbf{1}_{\{I_s=0\}}\,.
\end{align*}
Since $\frac{1-2\eta}{2\alpha_\eta-1}>\frac{1-\varepsilon}{2\alpha-1}$ by the choice of $\eta$ in \eqref{eq:eta} and Lemma~\ref{lem:designing-tree} (i), we conclude from Markov's inequality that 
\begin{equation*}
	\begin{aligned}
		\Pb\left[\mathrm{Overlap}(\pi)\le \frac{1-\varepsilon}{2\alpha-1}n\right]\le \frac{\zeta\cdot \mathbb{E}\left[\sum_{i=1}^S\mathbf{1}_{\{I_s=0\}}\right]}{\left(\frac{1-2\eta-o(1)}{2\alpha_\eta-1}-\frac{1-\varepsilon}{2\alpha-1}\right)n}\stackrel{\eqref{eq:sum-is-o(n)}}{=}o(1)\,,
	\end{aligned}
\end{equation*}
completing the proof of the theorem.
\end{proof}

\section{Complimentary proofs}\label{sec:complimentary-proof}

In this section, we prove Propositions~\ref{prop:one-point-est}, \ref{prop:two-point-est} and \ref{prop:good-events-are-typical}.
We need some more definitions and notations. For a subgraph $\mathbf H \subset \mathbf T$, we define its capacity by $$ \operatorname{Cap}(\mathbf H)=|V(\mathbf H)\cap \mathbf Q|-\alpha_\eta|E(\mathbf H)|\,.$$
For a subtree $\mathbf T_0\subset \mathbf T$, we say it is \emph{dense} if 
$\operatorname {Cap}(\mathbf F)> \operatorname {Cap}(\mathbf T)$ for any subgraph $\mathbf F\subsetneq \mathbf T_0$ with $V(\mathbf T_0)\cap \mathbf L \subset \mathbf F$.
In addition,  we
choose a subgrah $\mathbf F_0\subset \mathbf{T_0}$ with $V(\mathbf T_0)\cap \mathbf L\subset V(\mathbf{F}_0)$ such that $\operatorname {Cap}(\mathbf{F}_0)$ is minimal (among all such choices for $\mathbf F_0$). 
We define a quantity $\mathbb D(\mathbf T_0)$ by 
\begin{equation}
	\label{eq:def-of-M(T)}
\log_n \mathbb D(\mathbf T_0)=\operatorname{Cap}(\mathbf T_0)-\operatorname {Cap}(\mathbf F_0)\,.
\end{equation}

Throughout this section, we will frequently deal with the conditional probability that $\Gs$ contains a subgraph $\mathsf{T}\cong \mathbf{T}$ with fixed leaves given the existence of certain subgraph $\mathsf H$ in $\Gs$. To this end, we decompose such event according to the intersecting pattern of $\mathsf T\cap \mathsf H$.
For each possible realization $\mathsf{F}$ of $\mathsf{T}\cap \mathsf{H}$, take $\mathbf{F}\subset \mathbf T$ such that $\mathbf{F}$ is the image of 
$\mathsf{F}$ under the isomorphism from $\mathsf T$ to $\mathbf T$. Then it is straightforward from Markov's inequality that
\begin{equation}
	\label{eq:observation}
	\Pb[\exists \mathsf T\subset \mathsf G \mbox{ such that } \mathsf T \cong \mathbf T \mbox{ and }\mathsf{T}\cap \mathsf{H}=\mathsf{F}\mid \mathsf{H}\subset \Gs]\le n^{|V(\textbf{T})\setminus V(\textbf{F})|}p_\eta^{|E(\textbf{T})\setminus E(\textbf{F})|}\,.
\end{equation}
(In the above, $\mathsf H$ is a fixed subgraph with vertex set in $\mathsf V$, and the event $\mathsf H\subset \Gs$ means that every edge in $\mathsf H$ is contained in $\Gs$.)
Thus, in order to upper-bound $\Pb[\exists \mathsf T\subset \mathsf G \mbox{ such that } \mathsf T \cong \mathbf T \mid \mathsf{H}\subset \Gs]$, we may sum over the right hand side of \eqref{eq:observation} over all possible $\mathsf F$.

Finally, we introduce some notations. For $1\le s\le S$, fix a realization of $\mathcal F_{s-1}$. Recall the definitions of $\operatorname{Fail}_{s-1}$, $\operatorname{Suc}_{s-1}$, $\mathsf{Suc}_{s-1}$ and $\mathcal A_s^1,\mathcal A_s^2$ in \eqref{eq:Fail_s-1}, \eqref{eq:Suc_s-1} and \eqref{eq:def-of-As}. Clearly $\mathcal A_s^1$ is decreasing and $\mathcal A_s^2$ is increasing. We let (below $L\cup Q$ means the set of all vertices appearing in $L$ and $Q$)
	\[
	O_{s-1}= \bigcup_{(L,Q)\in \operatorname{Suc}_{s-1}}\{v:v\in L\cup Q\}\text{\quad and\quad } \mathsf O_{s-1}= \bigcup_{(\mathsf L,\mathsf Q)\in \mathsf{Suc}_{s-1}}\{\mathsf v:\mathsf v\in \mathsf L\cup \mathsf Q\}.
	\]
	Then $\mathsf O_{s-1}$ is the set of vertices involved in the event $\mathcal A_s^2$. From the algorithm we see
	\begin{equation}
		\label{eq:O_s-location}
		O_{s-1}\subset M_{s-1}=I_G(\operatorname{M}_{s-1})\,,\quad \mathsf O_{s-1}\subset \mathsf{M}_{s-1}=\Pi(\operatorname{M}_{s-1})\,.
	\end{equation}
	We define ${GO}_{s-1}$ (respectively $\mathsf {GO}_{s-1}$) as the graph on $M_{s-1}$ (respectively $\mathsf M_{s-1}$) which is the union of the trees
	that certify the events $\{L\Join_{G} Q\}$ for $(L,Q)\in \operatorname{Suc}_{s-1}$ (respectively $\{\mathsf L\Join_{\Gs} \mathsf Q\}$ for $(\mathsf L,\mathsf Q)\in \mathsf{Suc}_{s-1}$). Then it is clear that the event $\mathcal A_s^2$ is equivalent to $\mathsf{GO}_{s-1}\subset \Gs$. For a given realization of $\mathcal{F}_{s-1}$, we see that $O_{s-1},\mathsf O_{s-1}, {GO}_{s-1}$ and $\mathsf{GO}_{s-1}$ are deterministic, and in addition ${GO}_{s-1}\cong \mathsf {GO}_{s-1}$ through $\Pi\circ I_G^{-1}$ restricted on $M_{s-1}$.
 In addition, the rule of Algorithm~\ref{algo:greedy} implies 
\begin{equation}\label{eq-degree-bound}
\Delta( {GO}_s)\le \kappa_0+\Delta(\textbf{T})\,,
\end{equation}
where $\Delta(\cdot)$ is the maximal vertex degree. This is because except for the first time when a vertex is ``used'' it will always participate as a leaf vertex and thus only contributes 1 to the degree. For simplicity, we will denote $\widehat\Pb$ for the conditional probability on $\Gs$ given by $\Pb\big[\cdot|\mathcal A_s^2\big]=\Pb\big[\cdot\mid \mathsf{GO}_s\subset \Gs\big].$

\subsection{Proof of Proposition~\ref{prop:one-point-est}}

We continue to fix a realization of $\mathcal{F}_{s-1}$ as above and also fix a $\xi$-tuple $\operatorname{L}\in \mathfrak{A}(\operatorname{M}_{s-1},\xi)$. We first show the upper bound \eqref{eq:one-point-upper-bound}. Note that $\widehat{\Pb}$ is a product measure which admits the FKG inequality. As a result,
\begin{equation*}
	\label{eq:using-of-FKG}
	\begin{split}
		&\ \Pb\left[\Pi(\operatorname L)\Join_\Gs \mathsf R_{s-1}|\mathcal A_s^1,\mathcal A_s^2\right]=\widehat \Pb\left[\Pi(\operatorname L)\Join_\Gs \mathsf R_{s-1}|\mathcal A_s^1\right]\\
		\le&\ \widehat \Pb[\Pi(\operatorname L)\Join_\Gs \mathsf R_{s-1}]=\Pb[\Pi(\operatorname L)\Join_\Gs \mathsf R_{s-1}]\,,
	\end{split}
\end{equation*}
where the inequality holds because $\{\Pi(\operatorname L)\Join_\Gs \mathsf R_{s-1}\}$ is increasing and $\mathcal A_s^1$ is decreasing, and the last equality follows from independence (recall \eqref{eq:O_s-location}). This gives the upper bound.

Now we turn to the lower bound \eqref{eq:one-point-lower-bound}. The precise definitions of $\Gc_s^1$ and $s$-good tuples will be given later in this subsection, and we just assume $\mathcal{F}_{s-1}$ satisfies $\Gc_s^1$ and $\operatorname{L}$ is $s$-good for now. Let $\mathcal U$ be the event that there is a unique $\chi$-tuple  $\operatorname Q\in \mathfrak{A}(\operatorname R_{s-1}',\chi)$ such that $\Pi(\operatorname L)\Join_{\Gs} I_\Gs(\operatorname Q)$. Then we have
\begin{align}\label{eq-join-remove-A-1-2-lower}
		 &\ \Pb[\Pi(\operatorname L)\Join_\Gs \mathsf R_{s-1}\mid \mathcal A_s^1,\mathcal A_s^2]
		\ge\Pb\left[\Pi(\operatorname L)\Join_\Gs \mathsf R_{s-1},\mathcal U \mid \mathcal A_s^1,\mathcal A_s^2\right]\nonumber\\
		=&\ \frac{1}{\widehat{\Pb}[\mathcal A_s^1]}\sum_{\operatorname Q\in\mathfrak{A}(\operatorname R_{s-1}',\chi)}{\widehat\Pb\left[\Pi(\operatorname L)\Join_{\Gs}I_\Gs(\operatorname Q),\mathcal U,\mathcal A_s^1\right]}\,.
\end{align}
For each term in the preceding sum, note that $\mathcal U$ is decreasing conditioned on $\Pi(\operatorname L)\Join_{\Gs} I_\Gs(\operatorname Q)$ and thus we have
\begin{align}\label{eq-join-lower-bound-product}
		{\widehat \Pb\left[\Pi(\operatorname L)\Join_\Gs I_\Gs(\operatorname Q),\mathcal U,\mathcal A_s^1\right]}= &\  \widehat{\Pb}\left[\Pi(\operatorname L)\Join_\Gs I_\Gs(\operatorname Q),\mathcal U\right]\cdot{\widehat \Pb\left[\mathcal A_s^1|\  \Pi(\operatorname L)\Join_\Gs I_\Gs(\operatorname Q),\mathcal U\right]}\nonumber\\
		=&\ {\Pb}\left[\Pi(\operatorname L)\Join_\Gs I_\Gs(\operatorname Q),\mathcal U\right]\cdot \widehat\Pb\left[\mathcal A_s^1\mid \Pi(\operatorname L)\Join_\Gs I_\Gs(\operatorname Q),\mathcal U\right]\nonumber\\
		\ge&\ {\Pb}\left[\Pi(\operatorname L)\Join_\Gs I_\Gs(\operatorname Q),\mathcal U\right] \cdot\widehat\Pb\left[\mathcal A_s^1\mid \Pi(\operatorname L)\Join_\Gs I_\Gs(\operatorname Q)\right],
\end{align}
where the second equality follows from independence and the last inequality follows from FKG inequality ($\widehat{\Pb}[\cdot\mid \Pi(\operatorname L)\Join_\Gs I_\Gs(\operatorname Q)]$ is still a product measure).
Thus to prove the \eqref{eq:one-point-lower-bound}, the key is to show the following two lemmas.
\begin{lemma}
	\label{lem:unique-is-whp}
	For any $\operatorname L\in \mathfrak{A}(\operatorname M_{s-1}, \xi)$, it holds that
	\begin{equation}
		\label{eq: 4 goal first moment}
		\Pb\left[\Pi(\operatorname L)\Join_\Gs I_\Gs(\operatorname Q),\mathcal U\right]\ge \big[1-o(1)\big]\Pb\left[\Pi(\operatorname L)\Join_\Gs I_\Gs(\operatorname Q)\right].
	\end{equation}
\end{lemma}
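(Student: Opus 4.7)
The plan is to establish Lemma~\ref{lem:unique-is-whp} by showing that conditional on $\Pi(\operatorname L)\Join_\Gs I_\Gs(\operatorname Q)$, the uniqueness event $\mathcal U$ fails with probability only $o(1)$. Since the two random graphs are independent and $\operatorname Q\in \mathfrak A(\operatorname R'_{s-1},\chi)$ involves only $\mathsf V$-vertices disjoint from $\Pi(\operatorname M_{s-1})$, we have $\Pb[\Pi(\operatorname L)\Join_\Gs I_\Gs(\operatorname Q)]=p_\eta^{\zeta}$ and
\begin{equation*}
\Pb\left[\Pi(\operatorname L)\Join_\Gs I_\Gs(\operatorname Q),\,\mathcal U\right]
= \Pb[\Pi(\operatorname L)\Join_\Gs I_\Gs(\operatorname Q)]\cdot\bigl(1-\Pb[\mathcal U^{c}\mid \Pi(\operatorname L)\Join_\Gs I_\Gs(\operatorname Q)]\bigr),
\end{equation*}
so everything reduces to controlling $\Pb[\mathcal U^{c}\mid \Pi(\operatorname L)\Join_\Gs I_\Gs(\operatorname Q)]$.

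By a union bound, this conditional probability is at most the expected number of other $\chi$-tuples $\operatorname Q'\in \mathfrak A(\operatorname R'_{s-1},\chi)\setminus\{\operatorname Q\}$ for which $\Pi(\operatorname L)\Join_\Gs I_\Gs(\operatorname Q')$ holds, given $\Pi(\operatorname L)\Join_\Gs I_\Gs(\operatorname Q)$. I would group such $\operatorname Q'$ according to the \emph{intersection shape} of the associated tree copies. More precisely, let $\mathsf T_1,\mathsf T_2\subset \Gs$ be the isomorphic copies of $\mathbf T$ prescribed by $(\operatorname L,\operatorname Q)$ and $(\operatorname L,\operatorname Q')$ respectively; they share all leaves $\Pi(\operatorname L)$. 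Pulling $\mathsf T_1\cap \mathsf T_2$ back through the embedding of $\mathsf T_1$ yields a subgraph $\mathbf F\subset \mathbf T$ with $\mathbf L\subset V(\mathbf F)$, and $\mathbf F\subsetneq \mathbf T$ because $\operatorname Q'\neq \operatorname Q$.

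For a fixed such $\mathbf F$, the number of admissible $\operatorname Q'$ is bounded by an $\mathbf T$-dependent constant (the number of ways to re-embed the non-leaves of $\mathbf F$ inside $\mathbf T$ via the $\mathsf T_2$-embedding) times at most $n^{\chi-|V(\mathbf F)\cap \mathbf Q|}$ (the choices for the unshared non-leaves of $\mathsf T_2$ in $\operatorname R'_{s-1}$). Since the $|E(\mathbf F)|$ edges of $\mathsf T_1\cap \mathsf T_2$ are already forced to be present by conditioning on $\mathsf T_1\subset\Gs$, and the remaining $\zeta-|E(\mathbf F)|$ edges of $\mathsf T_2$ are independent of the conditioning, the conditional probability that $\mathsf T_2\subset \Gs$ is exactly $p_\eta^{\zeta-|E(\mathbf F)|}$. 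Combining,
\begin{equation*}
\Pb[\mathcal U^{c}\mid \Pi(\operatorname L)\Join_\Gs I_\Gs(\operatorname Q)]
\;\lesssim\;\sum_{\substack{\mathbf F\subsetneq \mathbf T\\ \mathbf L\subset V(\mathbf F)}} n^{\chi-|V(\mathbf F)\cap \mathbf Q|}\,p_\eta^{\zeta-|E(\mathbf F)|}
\;=\;\sum_{\mathbf F} n^{|V(\mathbf T)\setminus V(\mathbf F)|-\alpha_\eta|E(\mathbf T)\setminus E(\mathbf F)|},
\end{equation*}
where I used $p_\eta=n^{-\alpha_\eta}$, $|\mathbf Q|=\chi$ and $|E(\mathbf T)|=\zeta$. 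The balanced condition \eqref{eq:balanced} of Lemma~\ref{lem:designing-tree} makes each exponent strictly negative, and since the sum is over the finite (depending only on $\mathbf T$) collection of proper subgraphs $\mathbf F\subsetneq \mathbf T$ containing $\mathbf L$, the whole sum is $o(1)$, which yields \eqref{eq: 4 goal first moment}.

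The step most likely to require care is the combinatorial bookkeeping when two rigid copies of the labeled tree $\mathbf T$ glue along the leaves: different automorphisms of the intersection may inflate the count by an $\mathbf F$-dependent multiplicity, and one must verify that all these multiplicities are absorbed into an overall constant (finite, independent of $n$) so that the strict inequality in \eqref{eq:balanced} indeed suffices. Once this enumeration is cleanly set up, the rest is a direct application of the balanced property.
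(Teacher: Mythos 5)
Your argument is essentially the paper's own proof: you reduce to bounding $\Pb[\mathcal U^c\mid \Pi(\operatorname L)\Join_\Gs I_\Gs(\operatorname Q)]$, decompose by the intersection shape $\mathbf F$ of the second tree copy with the first, bound each class by the first-moment count $n^{|V(\mathbf T)\setminus V(\mathbf F)|}p_\eta^{|E(\mathbf T)\setminus E(\mathbf F)|}$ (which is exactly the paper's observation \eqref{eq:observation}), and conclude via the balanced condition \eqref{eq:balanced}. The automorphism/multiplicity bookkeeping you flag is handled identically in the paper by summing over labeled subgraphs $\mathbf F\subsetneq\mathbf T$, a finite $n$-independent collection, so the constant is absorbed and the argument goes through.
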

\begin{lemma}
	\label{lem:structure}
	On some good event $\Gc_s^1$ (defined in Definition~\ref{def:G-s-1} below), we have that for any $s$-good $\operatorname L\in\mathfrak{A}(\operatorname M_{s-1}, \xi)$, 
\begin{equation}
	\label{eq: 5 goal first moment}
	{\widehat\Pb\left[\mathcal A_s^1\mid \Pi(\operatorname L)\Join_\Gs I_\Gs(\operatorname Q)\right]}\ge \big[1-o(1)\big]\widehat{\Pb}\left[\mathcal A_s^1\right].
\end{equation}
\end{lemma}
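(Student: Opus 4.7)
Denote $\mathcal J=\{\Pi(\operatorname L)\Join_\Gs I_\Gs(\operatorname Q)\}$, which is the deterministic event that a fixed set $E^*$ of $\zeta$ edges (the tree embedding with leaves $\Pi(\operatorname L)$ and non-leaves $I_\Gs(\operatorname Q)$) is contained in $\Gs$. Under $\widehat\Pb[\cdot\mid\mathcal J]$ the remaining edges are still independent Bernoulli-$p_\eta$ variables, so the only effect of the conditioning is to force $E^*$ to be present. Since I expect $\widehat\Pb[\mathcal A_s^1]=1-o(1)$ on $\Gc_s^1$ (which I would verify by a first-moment bound on $|\mathsf{Fail}_{s-1}|$ using Proposition~\ref{prop:one-point-est}), proving \eqref{eq: 5 goal first moment} reduces to showing
\[
\widehat\Pb\bigl[(\mathcal A_s^1)^c\mid\mathcal J\bigr]\le\sum_{(\mathsf L',\mathsf Q')\in\mathsf{Fail}_{s-1}}\widehat\Pb[\mathsf L'\Join_\Gs\mathsf Q'\mid\mathcal J]=o(1)\,.
\]

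To bound a single summand, I would enumerate the potential witnesses $\mathsf T'\cong\mathbf T$ in $\Gs$ with leaves $\mathsf L'$ and non-leaves inside $\mathsf Q'$, and classify them by the labeled intersection $\mathbf F\subset\mathbf T$ corresponding to $\mathsf T'\cap E^*$ (and also, when needed, the intersection with $\mathsf{GO}_{s-1}$). Because the edges of $E^*$ are already forced, $\mathsf T'$ costs only $p_\eta^{\zeta-|E(\mathbf F)|}$ from its remaining edges, and there are $O(n^{\chi-|V(\mathbf F)\cap\mathbf Q|})$ choices for its non-$\mathbf F$ non-leaf coordinates, yielding a contribution of order $n^{\chi-|V(\mathbf F)\cap\mathbf Q|}p_\eta^{\zeta-|E(\mathbf F)|}$ per intersection pattern $\mathbf F$; this is essentially the estimate \eqref{eq:observation} applied conditionally.

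The case $\mathbf F=\emptyset$ reproduces the unconditional bound $\widehat\Pb[\mathsf L'\Join_\Gs\mathsf Q']\asymp n^\chi p_\eta^\zeta$ of Proposition~\ref{prop:one-point-est}, and is already absorbed into the estimate $\widehat\Pb[\mathcal A_s^1]=1-o(1)$. For every nonempty pattern $\mathbf F$, the strict balanced condition \eqref{eq:balanced} (strictness coming from the irrationality of $\alpha_\eta$) guarantees a savings factor $n^{-\delta}$ for some $\delta=\delta(\alpha_\eta,\mathbf T)>0$ over the $\mathbf F=\emptyset$ case; summed against the polynomial upper bound on $|\mathsf{Fail}_{s-1}|$ enforced by $\Gc_s^1$, this produces $o(1)$. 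The $s$-good hypothesis on $\operatorname L$ is designed precisely to rule out pathological alignments of $\Pi(\operatorname L)$ with leaves of potential failure embeddings, which are the configurations where the hypothesis $\mathbf L\subset\mathbf F$ required to invoke \eqref{eq:balanced} could fail.

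The main technical obstacle is the case analysis over $\mathbf F$, compounded by the need to simultaneously track the overlap of $\mathsf T'$ with $\mathsf{GO}_{s-1}$: the conditioning $\mathsf{GO}_{s-1}\subset\Gs$ means edges in $\mathsf T'\cap\mathsf{GO}_{s-1}$ are also cost-free, so the naive counting must be refined via the bounded-degree property \eqref{eq-degree-bound} and the $s$-good condition to ensure this extra overlap is small. Correctly formulating $\Gc_s^1$ and the definition of $s$-good so that every one of these compound overlap exponents stays strictly negative, while $|\mathsf{Fail}_{s-1}|$ enters at a controllable scale, is where the actual work resides; once this bookkeeping is in place, summing the resulting geometric contributions over $\mathbf F$ delivers the $1-o(1)$ lower bound.
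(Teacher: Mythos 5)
Your reduction rests on the claim $\widehat\Pb[\mathcal A_s^1]=1-o(1)$ on $\Gc_s^1$, and this is where the argument breaks. The event $\mathcal A_s^1$ aggregates the negation of $\{\mathsf L\Join_\Gs\mathsf Q\}$ over the entire failure history $\mathsf{Fail}_{s-1}$, and this history is far too large for such a bound: even in a ``good'' run of the algorithm, each of the $\approx n$ steps contributes on the order of $n^{\alpha_\eta\zeta-\chi}$ failed $\xi$-tuples (the expected number of $\prec$-earlier candidates tried before the first success), and each failed tuple is paired in $\mathsf{Fail}_{s-1}$ with all $\approx n^\chi$ choices of $\operatorname Q'\in\mathfrak A(\operatorname R_t',\chi)$. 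Multiplying by the per-pair probability $p_\eta^\zeta$ gives an expected ``failure mass'' of order $1$ per step and hence order $n$ overall, so $\widehat\Pb[\mathcal A_s^1]$ is typically exponentially small in $n$, not $1-o(1)$. Consequently the plan of showing $\widehat\Pb[(\mathcal A_s^1)^c\mid\mathcal J]=o(1)$ also fails: the union bound you propose diverges. Your remark that the $\mathbf F=\emptyset$ contribution ``is already absorbed into the estimate $\widehat\Pb[\mathcal A_s^1]=1-o(1)$'' is circular, since that estimate is precisely the thing you have not established (and cannot, as it is false).

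The paper avoids this obstacle by never attempting to show $\widehat\Pb[\mathcal A_s^1]$ is close to $1$. It instead writes the conditional probability space as a product $\Omega_1\times\Omega_2$ with $\Omega_2=\{0,1\}^{E(\mathsf T)}$, defines the slices $\mathscr A_s^i=\{\omega_1:\omega_1\oplus(i,\dots,i)\in\mathcal A_s^1\}$ for $i\in\{0,1\}$, and observes that $\widehat\Pb[\mathcal A_s^1\mid\mathcal J]=\widehat\Pb[\mathscr A_s^1]$ while $\widehat\Pb[\mathcal A_s^1]\le\widehat\Pb[\mathscr A_s^0]$, so that the lemma reduces to the \emph{ratio} estimate $\widehat\Pb[\mathscr A_s^1\mid\mathscr A_s^0]\ge 1-o(1)$. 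That is, one shows that opening the $\zeta$ edges of $\mathsf T$ rarely flips the truth value of $\mathcal A_s^1$ once it holds with those edges closed. The flip event is contained in a local event $\mathcal E_s$ (there exists $\mathsf T'\cong\mathbf T$ using at least one edge of $\mathsf T$ with leaves among previously explored tuples; one must also rule out that opening edges alters the algorithm history, which happens to also entail $\mathcal E_s$), one passes from $\widehat\Pb[\mathcal E_s\mid\mathscr A_s^0]$ to $\widehat\Pb[\mathcal E_s]$ by FKG since $\mathcal E_s$ is increasing and $\mathscr A_s^0$ is decreasing, and then $\widehat\Pb[\mathcal E_s]=o(1)$ is proved by the intersecting-pattern decomposition $(\mathbf F_1,\mathbf F_2)$ against $\mathsf T$ and $\mathsf{GO}_{s-1}$, the $s$-good hypothesis (which kills the $(\mathbf T,\emptyset)$ pattern), and the good event $\Gc_s^1$ bounding $\operatorname{Enum}(\mathbf F_1,\mathbf F_2)$. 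Your intuitions about \eqref{eq:balanced}, the role of $s$-goodness, and the need to track the overlap with $\mathsf{GO}_{s-1}$ via \eqref{eq-degree-bound} are on target, but these tools must be deployed inside the ratio comparison, not against the raw union bound for $\widehat\Pb[(\mathcal A_s^1)^c\mid\mathcal J]$, which has a divergent $\mathbf F=\emptyset$ term that your plan has no way to tame.
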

We now continue to complete the proof for \eqref{eq:one-point-lower-bound}. Note that
\[
{\Pb}\left[\Pi(\operatorname L)\Join_\Gs \mathsf R_{s-1}\right]\le \sum_{ \operatorname Q\in \mathfrak{A}(\operatorname R_{s-1}',\chi)}\Pb\left[\Pi(\operatorname L)\Join_\Gs I_\Gs(\operatorname Q)\right].
\]
Combined with \eqref{eq-join-remove-A-1-2-lower}, \eqref{eq-join-lower-bound-product} and Lemmas~\ref{lem:unique-is-whp} and \ref{lem:structure}, this verifies \eqref{eq:one-point-lower-bound}.

We next prove \eqref{eq:one-point-est}. The upper bound follows easily from Markov's inequality. As for the lower bound, by Lemma~\ref{lem:unique-is-whp} we have
\begin{align}
&\Pb[\Pi(\operatorname L)\Join_\Gs \mathsf R_{s-1}]\ge \sum_{\operatorname Q\in \mathfrak{A}(\operatorname R_{s-1}',\chi)}\Pb\left[\Pi(\operatorname L)\Join_{\Gs} I_\Gs(\operatorname Q),\mathcal U\right] \nonumber\\
\ge& \big[1-o(1)\big]\sum_{\operatorname Q\in \mathfrak{A}(\operatorname R_{s-1}',\chi)}\Pb\left[\Pi(\operatorname L)\Join_{\Gs} I_\Gs(\operatorname Q)\right]\gtrsim n^\chi p_\eta^\zeta\,. \label{eq-to-cite-prop-2.3-proof}
\end{align}
Combined with \eqref{eq:one-point-upper-bound} and \eqref{eq:one-point-lower-bound}, it yields \eqref{eq:one-point-est}, completing the proof of Proposition~\ref{prop:one-point-est}. 

It remains to prove Lemmas~\ref{lem:unique-is-whp} and \ref{lem:structure}.

\begin{proof}[Proof of Lemma~\ref{lem:unique-is-whp}]
	\eqref{eq: 4 goal first moment} is equivalent to $ \Pb\left[\mathcal U\mid \Pi(\operatorname L)\Join_\Gs I_\Gs(\operatorname Q)\right]\ge 1-o(1)$. We now show $\Pb\left[\mathcal U^c\mid \Pi(\operatorname L)\Join_\Gs I_\Gs(\operatorname Q)\right]=o(1)$ by a union bound. Let $\mathsf{T}$ denote the tree in $\Gs$ that certifies $\Pi(\operatorname L)\Join_\Gs I_\Gs(\operatorname Q)$ (so in particular $V(\mathsf T)=\{\mathsf v: \mathsf v\in\Pi(\operatorname L)\cup I_\Gs(\operatorname Q)\}$ and $\mathsf L=\Pi(\operatorname L)$ is the leaf set of $\mathsf T$). On the event $\mathcal U^c$, there exists another tree $\mathsf{T}'\cong \mathsf T$  with $V(\mathsf T) \neq V(\mathsf T')$ such that $\mathsf T' \subset \Gs$ and the leaf set of $\mathsf T'$ is also $\mathsf L$. Therefore, $\mathsf T$ and $\mathsf{T}'$ intersect at some subgraph of $\mathsf{F}$ with $\mathsf{L}\subset \mathsf{F}\subsetneq \mathsf{T}$.  By a union bound, we see
    \begin{align}
    	&\ \Pb[\mathcal U^c\mid \Pi(\operatorname L)\Join_\Gs I_\Gs(\operatorname Q)] \nonumber \\
    	\leq & \sum_{\mathsf{L}\subset \mathsf{F}\subsetneq\mathsf{T}}\Pb[\mbox{there exists } \mathsf{T}'\subset \Gs \mbox{ such that } \mathsf T' \cong \mathsf T \mbox{ and } \mathsf{T}'\cap \mathsf{T}=\mathsf{F}\mid \mathsf{T}\subset \Gs] \nonumber \\
    	\stackrel{\eqref{eq:observation}}{\le}&\sum_{ \mathbf L\subset\mathbf F\subsetneq \mathbf T}n^{|V(\textbf{T})\setminus V(\textbf{F})|}p_\eta^{|E(\mathbf{T})\setminus E(\mathbf{F})|}=\sum_{\mathbf L\subset \mathbf F\subsetneq \mathbf T}n^{|V(\mathbf T)\setminus V(\mathbf F)|-\alpha_\eta |E(\mathbf T)\setminus E(\mathbf F)|}\,, \label{eq-unique-is-typical}
    \end{align}
    which is $o(1)$ by \eqref{eq:balanced}, as desired.
\end{proof}

The rest of this subsection is devoted to the proof of Lemma~\ref{lem:structure}. Recalling the definition of $\mathsf{GO}_{s-1}$, we see $\widehat{\Pb}$ is a product measure on the space $\Omega=\{0,1\}^{\mathsf E_0 \setminus E(\mathsf{GO}_{s-1})}$. Denote $\mathsf{T}\cong \textbf T$ as the subtree of $\Gs$ that certifies $\{\Pi(\operatorname L)\Join_\Gs I_\Gs(\operatorname Q)\}$ as before. For $\omega\in \Omega$, we write $\omega=\omega_1\oplus \omega_2$, where $\omega_1\in \Omega_1 = \{0, 1\}^{\mathsf E_0 \setminus (E(\mathsf{GO}_{s-1}) \cup E(\mathsf T))}$ and $\omega_2 \in \Omega_2 = \{0, 1\}^{E(\mathsf T)}$ (here $\oplus$ means concatenation). It is then clear from the definition that 
	\[
	\widehat{\Pb}[\omega_1\oplus \omega_2]=\widehat{\Pb}[\omega_1]\widehat\Pb[\omega_2] \mbox{ and } \{\Pi(\operatorname L)\Join_{\Gs} I_\Gs(\operatorname Q)\} = \{\omega_2=(1,\ldots,1)\}\,.
	\]
For $i\in \{0, 1\}$, define
\begin{equation}
	\label{def:mathscrA}
	\mathscr{A}_s^{i}=\left\{\omega_1\in \Omega_1:\omega_1\oplus\{i,\dots,i\}\in \mathcal A_s^1\right\}\,.
\end{equation}
 The fact that $\mathcal A_s^1$ is decreasing implies that $\mathscr{A}_s^{0}$ is also decreasing and $\mathscr{A}_{s}^{1}\subset \mathscr{A}_s^{0}$. The left hand side of \eqref{eq: 5 goal first moment} can be expressed as $\widehat{\Pb}\big[\mathscr{A}_s^{1}\big]$.
 For $\widehat{\Pb}\big[\mathcal A_s^1\big]$ we have
\begin{align*}
		\widehat{\Pb}\left[\mathcal A_s^1\right]=&\ \sum_{\omega_1\in\Omega_1}\sum_{\omega_2\in\Omega_2}\mathbf{1}_{\mathcal A_s^1}( \omega_1\oplus\omega_2)\widehat{\Pb}[\omega_1\oplus\omega_2]\\
		\le &\ \sum_{\omega_1\in\Omega_1}1_{\mathcal A_s^1}(\omega_1\oplus\{0,\dots,0\}){\Pb}[\omega_1]\sum_{\omega_2\in \Omega_2}\Pb[\omega_2]\\
		=&\ \widehat{\Pb}\left[\big\{\omega_1:\omega_1\oplus\{0,\dots,0\}\in \mathcal A_s^1\big\}\right]=\widehat{\Pb}\left[\mathscr{A}_s^{0}\right]\,.
\end{align*}
Provided with this, we see \eqref{eq: 5 goal first moment} reduces to 
\begin{equation*}
	\frac{\widehat\Pb[\mathscr{A}_s^{1}]}{\widehat{\Pb}[\mathscr{A}_s^{0}]}=\widehat{\Pb}\big[\mathscr{A}_s^{1}\mid \mathscr{A}_s^{0}\big]\ge 1-o(1)\,.
\end{equation*}
To this end, we note that for $\omega_1 \in \mathscr A^0_s \setminus \mathscr A^1_s$, we have $\mathcal A_s^1$ holds if the edges in $E(\mathsf{T})$ are all closed while $\mathcal A_s^1$ fails after opening these edges. There are two possible scenarios for this: (i) opening edges in $\mathsf{T}$ changes the realization of $\mathcal{F}_{s-1}$ and hence alters the event $\mathcal A_s^1$; (ii) opening edges in $\mathsf{T}$ does not change the realization of $\mathcal{F}_{s-1}$, but some of these edges participate in the tuple which certifies the failure of $\mathcal A_s^1$. Denote by $\mathcal E_s$ the event that after opening edges in $E(\mathsf{T})$, there exists a subgraph $\mathsf{T}' \subset \Gs$ with an isomorphism $\phi:\mathbf T\to \mathsf T'$ such that
\begin{equation}\label{eq-def-mathcal-E-s}
E(\mathsf T') \cap E(\mathsf T) \neq \emptyset \mbox{ and } \phi(\mathbf L) = \Pi(\operatorname L') \mbox{ for some }\operatorname L'\in\bigcup_{1\le t \le s-1}\operatorname{EXP}_{t}\,.
\end{equation}
Clearly, $\mathcal E_s$ is an increasing event. We claim that both scenarios imply $\mathcal E_s$. Indeed, if $\mathcal E_s$ does not hold, then after opening edges in $E(\mathsf{T})$, the realization of $\mathcal{F}_{s-1}$ remains to be the same by the rule of Algorithm~\ref{algo:greedy}. So, in particular $\mathsf{Fail}_{s-1}$ remains to be the same. In addition, from the definition of $\mathcal A_s^1$, we see under the event $\mathcal E_s^c$, whether $\mathcal A_s^1$ happens does not depend on openess/closedness for edges in $E(\mathsf{T})$. This proves the claim and now it suffice to show 
\[
\widehat{\Pb}\big[\mathcal E_s\mid \mathscr{A}_s^{0}\big]\le \widehat{\Pb}[\mathcal E_s]=o(1)\,,
\]
where the first inequality follows from FKG. 

We divide $\mathcal E_s$ according to the intersecting patterns of $\mathsf{T}'$ with $\mathsf T$ and $ \mathsf{GO}_{s-1}$. Denote $\mathcal P$ for the pairs $(\mathbf{F}_1,\mathbf{F}_2)$ with $\mathbf{F}_1,\mathbf{F}_2\subset \mathbf T$ such that $\mathbf{F}_1$ is a \emph{proper} subtree of $\mathbf T$ with at least one edge, $V(\mathbf F_1)\cap V(\mathbf F_2)=\emptyset$ and $ \mathbf L \subset V(\mathbf{F}_1)\cup V( \mathbf F_2)$. For a pair $(\mathbf F_1,\mathbf F_2)\in \mathcal P$, define $\mathcal E_s(\mathbf F_1,\mathbf F_2)$ to be the event that \eqref{eq-def-mathcal-E-s} holds and that
\[
\phi\big(E(\mathbf F_1)\big)\cap E(\mathsf{T})\neq \emptyset\,,\quad \phi\left(\mathbf F_1\cup\mathbf F_2\right)= \mathsf{T}'\cap (\mathsf{GO}_{s-1}\cup \mathsf T)\,.
\]

Recalling \eqref{eq-def-mathcal-E-s}, we obtain 
\begin{equation}
	\label{eq:containing-relation-naive-version}
	\mathcal E_s\subset \mathcal{E}_s(\mathbf T,\emptyset)\cup \bigcup_{(\mathbf{F}_1,\mathbf{F}_2)\in \mathcal P}\mathcal E_s(\textbf{F}_1,\mathbf{F}_2)\,.
\end{equation}
(Indeed, $\mathbf F_1$ can be taken as any component of $\phi^{-1}(\mathsf T'\cap (\mathsf T\cup \mathsf{GO}_{s-1}))$ which contains at least one edge in $E(\mathsf T)$.) 
We want to exclude the case $\mathcal E_s(\mathbf T,\emptyset)$ from the assumption that $\operatorname{L}$ is $s$-good. To this end, we make the following definition.
	\begin{defn}
		\label{def:s-good}
		For a $\xi$-tuple $\operatorname{L}\in \mathfrak{A}(\operatorname{M}_{s-1},\xi)$, we say it is \emph{$s$-bad}, if for some $T \cong \mathbf T$ with leaf set $L=I_G(\operatorname{L})$ and $V(T)\cap M_{s-1}=L$, we have that the graph $T\cup GO_{s-1}$ contains a subgraph $T^*\cong \mathbf T$ satisfying that $E(T^*)\cap E(T)\neq \emptyset$ and that the leaf set of $T^*$ equals to $L^*=I_G(\operatorname{L}^*)$ for some $\operatorname{L}^*\in \bigcup_{1\le t\le s-1}\operatorname{EXP}_{t}$ (note that in this definition for `some' $T\cong \mathbf T$ is equivalent to for `any' $T\cong \mathbf T$). Otherwise we say $\operatorname{L}\in \mathfrak{A}(\operatorname{M}_{s-1},\xi)$ is \emph{$s$-good}.
	\end{defn}
From the definition and the fact that $GO_{s-1}\cong \mathsf{GO}_{s-1}$ through $\Pi\circ I_G^{-1}$, we see that $\mathcal E_s\cap \mathcal E_s(\mathbf T,\emptyset)=\emptyset$  under the assumption that $\operatorname{L}$ is $s$-good. Thus, \eqref{eq:containing-relation-naive-version} can be strengthened as
\begin{equation}
	\label{eq:containing-relation}
	\mathcal E_s\subset \bigcup_{(\mathbf{F}_1,\mathbf{F}_2)\in \mathcal P}\mathcal E_s(\mathbf{F}_1,\mathbf{F}_2)\,.
\end{equation}

For a pair $(\mathbf F_1,\mathbf F_2)\in \mathcal P$, let $\operatorname{Enum}(\mathbf F_1,\mathbf F_2)$ be the number of $\xi$-tuples $\operatorname L'\in \bigcup_{1\le t\le s-1}\operatorname{EXP}_{t}$ such that there exist two 
embeddings $\phi_i:\mathbf F_i\to\mathsf{T}\cup \mathsf{GO}_{s-1},i=1,2$ 
such that
 $$\phi_1\big(E(\mathbf F_1)\big)\cap E(\mathsf T)\neq \emptyset\text{ and }\phi_1\big(V(\mathbf F_1)\cap \mathbf L\big)\cup\phi_2\big(V(\mathbf F_2)\cap \mathbf L\big)= \Pi(\operatorname L')\,.$$ 
 (Note that $ \operatorname {Enum}(\mathbf F_1,\mathbf F_2)$ depends on $\mathsf T$ although we did not include $\mathsf T$ in the notation.)
Then $ \operatorname {Enum}(\mathbf F_1,\mathbf F_2)$ is the number of possible choices for the leaf set of $\mathsf{T}'$ which may potentially certify the event $\mathcal E_s(\mathbf F_1,\mathbf F_2)$. For each fixed choice, we see from \eqref{eq:observation} that the probability that this indeed certifies the event $\mathcal E_s(\mathbf F_1,\mathbf F_2)$ is upper-bounded by 
\begin{equation}
	\label{eq:Prob}
	 \operatorname {Prob}(\mathbf F_1,\mathbf F_2)=n^{|V(\mathbf T)|-|V(\mathbf F_1)|-|V(\mathbf F_2)|}p_\eta^{|E(\mathbf T)|-|E(\mathbf F_1)|-|E(\mathbf F_2)|}\,.
\end{equation}
Then by a union bound, for any pair $(\mathbf F_1,\mathbf F_2)\in \mathcal P$ it holds that 
\begin{equation}
	\label{eq:prob-times-enum}
	\widehat{\Pb}\left[\mathcal E_s(\mathbf F_1,\mathbf F_2)\right]\le  \operatorname {Enum}(\mathbf F_1,\mathbf F_2)\times  \operatorname {Prob}(\mathbf F_1,\mathbf F_2)\,.
\end{equation}
Let $\mathcal P_0$ be the collection of $(\mathbf F_1,\mathbf F_2) \in \mathcal P$ which maximizes the right hand side of \eqref{eq:prob-times-enum}. We claim that for any $(\mathbf F_1,\mathbf F_2) \in \mathcal P_0$ the components of $\mathbf F_2$ are dense trees which intersect $\mathbf L$.
We now prove this claim by contradiction, and we divide the proof into two cases. 
If $\mathbf F_2$ contains a tree component disjoint from $\mathbf L$, we can simply remove this component from $\mathbf F_2$ to get a pair $(\mathbf F_1,\mathbf F_2')\in \mathcal P$ with 
\[
\operatorname {Prob}(\mathbf F_1,\mathbf F_2')> \operatorname {Prob}(\mathbf F_1,\mathbf F_2),\quad \operatorname {Enum}(\mathbf F_1,\mathbf F_2')\ge  \operatorname {Enum}(\mathbf F_1,\mathbf F_2)\,,
\]
contradicting the maximality. If $\mathbf F_2$ contains a tree component $\mathbf T_0$ which intersects $\mathbf L$ but is not dense, we may find some $\mathbf F_0\subsetneq \mathbf T_0$  which contains $V(\mathbf T_0)\cap \mathbf L$ such that $\operatorname {Cap}(\mathbf F_0)< \operatorname {Cap}(\mathbf T_0)$ (this is feasible since $\alpha_\eta$ is irrational). We define $\mathbf F'_2$ by replacing $\mathbf T_0$ with $\mathbf F_0$ in $\mathbf F_2$ and get a pair $(\mathbf F_1,\mathbf F_2')\in \mathcal P$. Again, it satisfies
\[
 \operatorname {Prob}(\mathbf F_1,\mathbf F_2')>\operatorname {Prob}(\mathbf F_1,\mathbf F_2) \mbox{ and } \operatorname {Enum}(\mathbf F_1,\mathbf F_2')\ge  \operatorname {Enum}(\mathbf F_1,\mathbf F_2)\,,
\] 
contradicting the maximality. This completes the verification of the claim. 
Recall \eqref{eq:def-of-M(T)}. We  are now ready to define our good event $\Gc_s^1$.
\begin{defn}
	\label{def:G-s-1}
	Fix some large constant $\kappa_1=\kappa_1(\eta,\alpha_\eta,\mathbf T)$ which will be determined later. We define $\Gc_s^1$ to be the event that for any  $(\mathbf F_1,\mathbf F_2)\in \mathcal P_0$ and for any $\mathsf T\subset  \mathsf G$,
	\begin{equation}
		\label{eq:good-s-1}
		 \operatorname {Enum}(\mathbf F_1,\mathbf F_2)\le \begin{cases}
			\kappa_1n\times(np_\eta)^{|E(\mathbf T)|-|E(\mathbf F_2)|},\quad &\text{if }V(\mathbf F_1)\cap \mathbf L = \emptyset\,,\\
			\kappa_1\mathbb D(\mathbf F_1)\times (np_\eta)^{|E(\mathbf T)|-|E(\mathbf F_1)|-|E(\mathbf F_2)|},&\text{if }V(\mathbf F_1)\cap \mathbf L \neq \emptyset\,.
		\end{cases}
	\end{equation} 
\end{defn}
Since $|\mathcal P|$ is uniformly bounded in $n$, it suffices to show that on $\Gc_s^1$
\begin{equation}\label{eq-enum-prob-P-0}
\operatorname {Enum}(\mathbf F_1,\mathbf F_2)\times  \operatorname {Prob}(\mathbf F_1,\mathbf F_2) = o(1) \mbox{ for any }(\mathbf F_1, \mathbf F_2)\in \mathcal P_0\,.
\end{equation}
To this end, we may write
$
(\mathbf F_1,\mathbf F_2)=(\mathbf T_0,\mathbf T_1\cup \cdots\cup \mathbf T_l)
$,
where $\mathbf T_i$ is a subtree of $\mathbf T$ for $0\le i\le l$. The proof is divided into two cases.

In the case that $V(\mathbf T_0)\cap \mathbf L = \emptyset$, the target product is upper-bounded by (recalling \eqref{eq:good-s-1})
\begin{align*}
&\kappa_1n^{\chi+\zeta}p_\eta^{2\zeta}\times n^{-|V(\mathbf T_0)|+1}p_\eta^{-|E(\mathbf T_0)|}\times \prod_{i=1}^{l}n^{-|V(\mathbf T_i)\cap \mathbf Q|-|E(\mathbf T_i)|}p_\eta^{-2|E(\mathbf T_i)|}\\
=&\ \kappa_1n^{\chi-(2\alpha_\eta-1)\zeta}\times (np_\eta)^{-|E(\mathbf T_0)|}\times \prod_{i=1}^l n^{-|V(\mathbf T_i)\cap \mathbf Q|+(2\alpha_\eta-1)|E(\mathbf T_i)|}\,.
\end{align*}
Note that the second term (i.e., $(np_\eta)^{-|E(\mathbf T_0)|}$) is no more than $n^{\alpha_\eta-1}$ since $E(\mathbf T_0)\neq \emptyset$ and 
that the third term is bounded by $1$ from \eqref{eq:balanced-2}. Thus, we see the expression above is upper-bounded by $\kappa_1n^{\chi-(2\alpha_\eta-1)\zeta}\times n^{\alpha_\eta-1}$, which is $o(1)$ by Lemma~\ref{lem:designing-tree} (i). This proves the desired bound in this case. 

In the case that $V(\mathbf T_0)\cap \mathbf L \neq \emptyset$, we may write 
\[
\mathbb D(\mathbf T_0)=n^{\operatorname {Cap}(\mathbf T_0)- \operatorname {Cap}(\mathbf F_0)}=n^{\operatorname {Cap}(\mathbf T_0)-\sum_{j=1}^m \operatorname {Cap}(\mathbf T_j')}\,,
\]
where $\mathbf F_0 \subset \mathbf T_0$ is the union of disjoint trees $\mathbf T_1', \ldots, \mathbf T_m'$ with  $V(\mathbf T_0)\cap \mathbf L \subset V(\mathbf F_0)$ such that $ \operatorname {Cap}(\mathbf F_0)$ is minimized. As a result, we see the product $ \operatorname {Enum}(\mathbf F_1,\mathbf F_2)\times \operatorname {Prob}(\mathbf F_1,\mathbf F_2)$ is upper-bounded by
\begin{equation}\label{eq-bound-long-expression}
	\kappa_1n^{\chi+\zeta}p_\eta^{2\zeta}\times n^{-\sum_{j=1}^m \operatorname {Cap}(\mathbf T_j')}\times (np_\eta)^{-|E(\mathbf T_0)|}\times \prod_{i=1}^{l}n^{-|V(\mathbf T_i)\cap \mathbf Q|-|E(\mathbf T_i)|}p_\eta^{-2|E(\mathbf T_i)|}\,.
	\end{equation}
	If some of $\mathbf T_j'$  is not a singleton, then \eqref{eq-bound-long-expression} is upper-bounded by (recalling that the product above is upper-bounded by 1)
	\begin{align*}
		&\ \kappa_1n^{\chi+\zeta}p_\eta^{2\zeta}\times n^{-\sum_{j=1}^m  \operatorname {Cap}(\mathbf T_j')}\times (np_\eta)^{-\sum_{j=1}^m|E(\mathbf T_j')|}\\
		\leq&\  \kappa_1n^{\chi-(2\alpha_\eta-1)\zeta}\prod_{j=1}^m n^{-|V(\mathbf T_j')\cap Q(\mathbf T)|+(2\alpha_\eta-1)|E(\mathbf T_j')|}\,,
		\end{align*}
		which is $o(1)$ by Lemma~\ref{lem:designing-tree} (i) and (iv). If each $\mathbf T_j'$ is a singleton, then \eqref{eq-bound-long-expression} is upper-bounded by
		$$\kappa_1n^{\chi+\zeta}p_\eta^{2\zeta}\times (np_\eta)^{-1} \leq \kappa_1n^{\chi-(2\alpha_\eta-1)\zeta}\times n^{\alpha_\eta-1}\,,$$
which is also $o(1)$ Lemma~\ref{lem:designing-tree} (i). This completes the proof in this case, and thus finally completes the proof of Lemma~\ref{lem:structure}.

\subsection{Proof of Proposition~\ref{prop:two-point-est}}

We continue to fix some $1\le s\le S$ and a realization of $\mathcal{F}_{s-1}$. We further fix a realization of $\operatorname {CAND}_s$, and hence we also get the set of $s$-good $\xi$-tuples in $\operatorname{CAND}_s$, denoted as $\operatorname{GC}_s=\{\operatorname L_1,\dots,\operatorname L_l\}$.
For any nonempty subset $\mathbf R \subset \mathbf L$, denote $\mathbf{Span}(\mathbf R)$ for the subtree of $\mathbf T$ spanned by $\mathbf R$ (i.e., the minimal subtree that contains $\mathbf R$). Throughout this section, it will be convenient to partition pairs in $\operatorname {GC}_s\times \operatorname {GC}_s$ according to their intersecting patterns. More precisely, for two $\xi$-tuples $\operatorname L = (t_1, \ldots, t_\xi)$ and $\operatorname L' = (t'_1, \ldots, t'_\xi)$, we let $\operatorname {Loc}(\operatorname L,\operatorname L') = \{1\leq i\leq \xi: t'_i \in \{t_1, \ldots, t_\xi\}\}$. For any subset $\mathbf R\subset \mathbf L$ (Recall that $\mathbf L=\{\chi+1,\dots,\chi+\xi\}$), we let 
\[
 \operatorname {IP}_s(\mathbf R)=\big\{(\operatorname L_i,\operatorname L_j) \in\operatorname{GC}_s \times \operatorname{GC}_s :\operatorname {Loc}(\operatorname L_i,\operatorname L_j)=\{r-\chi, r\in \mathbf R\}\big\}\,.
\]
We are now ready to define our good event $\Gc_s^2$.
\begin{defn}
	\label{def:G-s-2}
Fix some positive constants $\kappa_2,\kappa_3$ depending only on $\eta,\alpha_\eta$ and $\mathbf T$ which will be determined later. The good event $\Gc_s^2$ is the intersection of the following two events:\\
(i) $l=|\operatorname{GC}_s|\ge \kappa_2(np_\eta)^\zeta$.\\
(ii) For any nonempty subset $\mathbf R\subset \mathbf L$, 
$$|\operatorname {IP}_s(\mathbf R)| \leq \kappa_3(np_\eta)^\zeta\times \mathbb D(\mathbf{Span}(\mathbf R))\times (np_\eta)^{|E(\mathbf T)\setminus E(\mathbf{Span}(\mathbf R))|}\,.$$
\end{defn}
We next assume that the realization $\mathcal{F}_{s-1/2}=\sigma(\mathcal{F}_s\cup \operatorname{CAND}_s)$ satisfies $\Gc_s^1\cap \Gc_s^2$ and prove \eqref{eq:second-moment-bound}. For simplicity, we write $\mathsf L_i=\Pi(\operatorname L_i)$. Since $\mathcal{F}_{s-1}$ satisfies $\Gc_s^1$, from Proposition~\ref{prop:one-point-est} we see for any $\mathsf L_i\in \Pi\left(\operatorname {GC}_s\right)$,
\[
\Pb[\mathsf L_i\Join_{\Gs} \mathsf R_{s-1}\mid \mathcal{F}_{s-1/2}]=\Pb[\mathsf L_i\Join_{\mathsf G}\mathsf R_{s-1}\mid \mathcal{F}_{s-1}]\ge\big[1-o(1)\big]\Pb[\mathsf L_i\Join_{\Gs} \mathsf R_{s-1}]\ge c_1n^\chi p_\eta^\zeta\,.
\]
Combined with (i) in $\Gc_s^2$, this yields a lower bound on the right hand side of \eqref{eq:second-moment-bound}:
\begin{equation}
	\label{eq:lower-bound-of-EXs}
\begin{split}
		\big(\mathbb{E}\left[X_s\mid\mathcal{F}_{s-1/2}\right]\big)^2=\left(\sum_{i=1}^l \Pb[\mathsf L_i\Join_{\Gs} \mathsf R_{s-1}\mid \mathcal{F}_{s-1/2}]\right)^2\gtrsim l^2\big(n^{\chi}p_\eta^{\zeta}\big)^2\gtrsim \big(n^{\chi+\zeta}p_\eta^{2\zeta}\big)^2\,.
\end{split}
\end{equation}
For the left hand side of \eqref{eq:second-moment-bound}, we may expand it out and break the sum into several parts as follows:
\begin{align}
		\nonumber\ \mathbb{E}\left[X_s^2\mid \mathcal{F}_{s-1/2}\right]=&\sum_{i,j=1}^l \Pb[\mathsf L_i\Join_{\Gs} \mathsf R_{s-1},\mathsf L_j\Join_{\Gs}\mathsf R_{s-1}\mid \mathcal{F}_{s-1/2}]\\
		=&\sum_{(\operatorname L_i,\operatorname L_j)\in\operatorname{IP}_s(\emptyset)}\Pb[\mathsf L_i\Join_\Gs\mathsf R_{s-1} ,\mathsf L_j\Join_\Gs\mathsf R_{s-1}\mid \mathcal{F}_{s-1}] \label{eq:break-of-sum-1}\\
		+&\sum_{\emptyset\neq \mathbf R\subset \mathbf L}\sum_{(\operatorname L_i,\operatorname L_j)\in\operatorname {IP}_s(\mathbf R)}\Pb[\mathsf L_i\Join_\Gs\mathsf R_{s-1},\mathsf L_j\Join_\Gs\mathsf R_{s-1}\mid \mathcal{F}_{s-1}]\,.\label{eq:break-of-sum-2}
\end{align}
We estimate the probabilities in the sum above by the following two lemmas.
\begin{lemma}
	\label{lem:est-for-disjoint}
	For any  $(\operatorname L_i,\operatorname L_j)\in\operatorname{IP}_s(\emptyset)$, it holds that
	\begin{equation}
		\label{eq:est-for-disjoint}
		\begin{aligned}
		&\ \Pb[\mathsf L_i\Join_{\Gs} \mathsf R_{s-1}, \mathsf L_j\Join_\Gs\mathsf R_{s-1}\mid \mathcal{F}_{s-1}]\\
		\le&\ \big[1+o(1)\big]\Pb[\mathsf L_i\Join_\Gs\mathsf R_{s-1}\mid \mathcal{F}_{s-1}]\Pb[\mathsf L_j\Join_\Gs\mathsf R_{s-1}\mid \mathcal{F}_{s-1}]\,.
		\end{aligned}
	\end{equation}
\end{lemma}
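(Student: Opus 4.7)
The plan is to first use FKG to remove the conditioning on $\mathcal A_s^1$, then enumerate pairs of witness trees $(\mathsf T,\mathsf T')$ by their intersection pattern, show that the vertex-disjoint case produces the target product up to $(1+o(1))$ via independence and the uniqueness argument in Lemma~\ref{lem:unique-is-whp}, and show that all other patterns are lower order thanks to the fact that the pulled-back intersection must be a forest on $\mathbf Q$. At the very end, Proposition~\ref{prop:one-point-est} converts the unconditional product back into the conditional one in the statement.

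Set $A=\{\mathsf L_i\Join_\Gs \mathsf R_{s-1}\}$ and $B=\{\mathsf L_j\Join_\Gs \mathsf R_{s-1}\}$. Since $\widehat\Pb$ is a product measure on which $A\cap B$ is increasing while $\mathcal A_s^1$ is decreasing, FKG gives, for each candidate pair of witness trees $(\mathsf T,\mathsf T')$,
\[
\widehat\Pb[\mathsf T\subset\Gs,\,\mathsf T'\subset\Gs,\,\mathcal A_s^1]\le \widehat\Pb[\mathsf T,\mathsf T'\subset\Gs]\,\widehat\Pb[\mathcal A_s^1].
\]
Summing over pairs and dividing by $\widehat\Pb[\mathcal A_s^1]$ yields
\[
\Pb[A\cap B\mid\mathcal{F}_{s-1}] = \widehat\Pb[A\cap B\mid\mathcal A_s^1] \;\le\; \sum_{\mathsf T,\mathsf T'}\widehat\Pb[\mathsf T,\mathsf T'\subset\Gs],
\]
so the problem reduces to controlling the right-hand side.

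I will classify pairs $(\mathsf T,\mathsf T')$ by the intersection $\mathsf T\cap\mathsf T'$. Because $\mathsf L_i\cap\mathsf L_j=\emptyset$ by the assumption $\operatorname{Loc}(\operatorname L_i,\operatorname L_j)=\emptyset$, and because each leaf of one witness lies in $\mathsf M_{s-1}$ while each non-leaf of the other lies in $\mathsf R_{s-1}$, the intersection contains only non-leaf vertices. Pulling back by the isomorphisms $\phi,\phi'\colon\mathbf T\to\mathsf T,\mathsf T'$, the pattern is described by graph-isomorphic subgraphs $\mathbf F,\mathbf F'\subset\mathbf T$ with $V(\mathbf F),V(\mathbf F')\subset\mathbf Q$. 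In particular $\mathbf F'$ is a forest, so writing $c(\mathbf F')$ for its number of connected components one has $|V(\mathbf F')|=|E(\mathbf F)|+c(\mathbf F')$ with $c(\mathbf F')\ge 1$ whenever $\mathbf F'\ne\emptyset$. For the disjoint pattern $\mathbf F=\emptyset$, independence and the uniqueness argument of Lemma~\ref{lem:unique-is-whp} (cf.\ \eqref{eq-to-cite-prop-2.3-proof}) give
\[
\sum_{(\mathsf T,\mathsf T')\text{ vertex-disjoint}}\widehat\Pb[\mathsf T,\mathsf T'\subset\Gs] \;\le\; \Bigl(\sum_{\mathsf T}p_\eta^\zeta\Bigr)\Bigl(\sum_{\mathsf T'}p_\eta^\zeta\Bigr)\;\le\;(1+o(1))\widehat\Pb[A]\widehat\Pb[B].
\]
Each non-empty pattern contributes at most
\[
n^{2\chi-|V(\mathbf F')|}p_\eta^{2\zeta-|E(\mathbf F)|} = n^{2\chi}p_\eta^{2\zeta}\cdot n^{-(1-\alpha_\eta)|E(\mathbf F)|-c(\mathbf F')} \le n^{-1}\cdot n^{2\chi}p_\eta^{2\zeta},
\]
and summing over the finitely many patterns gives $o(\widehat\Pb[A]\widehat\Pb[B])$.

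Finally, Proposition~\ref{prop:one-point-est} supplies $\widehat\Pb[A\mid\mathcal A_s^1]\ge(1-o(1))\widehat\Pb[A]$ and similarly for $B$, hence $\widehat\Pb[A]\widehat\Pb[B]\le(1+o(1))\Pb[A\mid\mathcal{F}_{s-1}]\Pb[B\mid\mathcal{F}_{s-1}]$, which combined with the bound above delivers \eqref{eq:est-for-disjoint}. The main obstacle is the intersection-pattern bookkeeping in the middle step: one has to argue carefully that the disjointness of the two leaf sets together with the location of non-leaves in $\mathsf R_{s-1}$ forces the pulled-back intersection to live inside $\mathbf Q$ (so that $c(\mathbf F')\ge 1$ really does furnish the $n^{-1}$ saving), and one must also lift the single-tree uniqueness argument of Lemma~\ref{lem:unique-is-whp} to the product of the two sums $\sum_{\mathsf T}p_\eta^\zeta$ and $\sum_{\mathsf T'}p_\eta^\zeta$ so that the multiplicative constant on the disjoint contribution is genuinely $1+o(1)$ rather than merely $O(1)$.
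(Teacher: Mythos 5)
Your proposal is correct and follows essentially the same route as the paper: FKG to strip off $\mathcal A_s^1$, a decomposition according to the intersection pattern of the two witness trees (which is forced to be a forest living in $\mathbf Q$ by the disjointness of the leaf sets and the location of non-leaves in $\mathsf R_{s-1}'$), the uniqueness argument of Lemma~\ref{lem:unique-is-whp} to make the disjoint contribution $1+o(1)$ rather than merely $O(1)$, and Proposition~\ref{prop:one-point-est} to pass back to the conditional probabilities. The only cosmetic difference is that you enumerate the pairs $(\mathsf T,\mathsf T')$ jointly, whereas the paper first sums over $\operatorname Q$ for the first tree and then bounds the conditional probability of the second given $\mathsf T\subset\Gs$, a conditional-chaining formulation of the same calculation.
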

\begin{lemma}
	\label{lem:est-for-intersecting}
	For any nonempty subset $\mathbf R \subset \mathbf L$, let $\mathbf{F}_{\mathbf R}$ be the subgraph of $\mathbf{T}$ with $V(\mathbf F_{\mathbf R})\cap \mathbf L = \mathbf R$ such that $ \operatorname{Cap}(\mathbf F_{\mathbf R})$ is minimized out of all such subgraphs. Then for any  $(\operatorname L_i,\operatorname L_j)\in\operatorname {IP}_s(\mathbf R)$, it holds that
	\begin{equation}
		\label{eq:est-for-intersecting}
		\Pb[\mathsf L_i\Join_\Gs\mathsf R_{s-1},\mathsf L_j\Join_\Gs\mathsf R_{s-1}\mid \mathcal{F}_{s-1}]\lesssim \big(n^\chi p_\eta^\zeta\big)^2\times n^{- \operatorname {Cap}(\mathbf{F}_{\mathbf R})}. 
	\end{equation}
\end{lemma}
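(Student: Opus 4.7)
The plan is to discard the conditioning on $\mathcal A_s^1$ via FKG, reduce to an unconditional edge-counting under $\widehat\Pb$ by exploiting a structural disjointness from $\mathsf{GO}_{s-1}$, and then union-bound over the intersection pattern of the two witness trees.

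First I observe that, by independence of $G$ and $\Gs$, one has $\Pb[\,\cdot\mid \mathcal F_{s-1}] = \widehat\Pb[\,\cdot \mid \mathcal A_s^1]$ for any $\Gs$-measurable event, so since both join events are increasing while $\mathcal A_s^1$ is decreasing, FKG (applied to the product measure $\widehat\Pb$ on $\{0,1\}^{\mathsf E_0\setminus E(\mathsf{GO}_{s-1})}$) gives
\[
\Pb\bigl[\mathsf L_i\Join_\Gs \mathsf R_{s-1},\ \mathsf L_j\Join_\Gs \mathsf R_{s-1}\mid \mathcal F_{s-1}\bigr]\le \widehat\Pb\bigl[\mathsf L_i\Join_\Gs \mathsf R_{s-1},\ \mathsf L_j\Join_\Gs \mathsf R_{s-1}\bigr].
\]
The key structural observation is that any candidate tree $\mathsf T\cong \mathbf T$ with leaves in $\Pi(\operatorname M_{s-1})$ and non-leaves in $\mathsf R_{s-1}$ has every edge incident to a non-leaf (since $\mathbf T$ has no leaf-to-leaf edge), hence every such edge has an endpoint in $I_\Gs(\operatorname R_{s-1}')$; but $E(\mathsf{GO}_{s-1})$ consists only of edges among vertices in $\Pi(\operatorname M_{s-1})$, which is disjoint from $I_\Gs(\operatorname R_{s-1}')$. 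So the edges of any candidate tree lie outside $E(\mathsf{GO}_{s-1})$, and for any fixed pair of candidate trees $(\mathsf T_i, \mathsf T_j)$,
\[
\widehat\Pb[\mathsf T_i\subset \Gs,\ \mathsf T_j\subset \Gs]=p_\eta^{|E(\mathsf T_i\cup \mathsf T_j)|},
\]
exactly as under the unconditioned measure.

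Next I expand the right-hand side as a union bound over pairs of witness trees, grouped by their intersection pattern. Since $\Pi(\operatorname M_{s-1})$ and $I_\Gs(\operatorname R_{s-1}')$ are disjoint, each common vertex of $\mathsf T_i$ and $\mathsf T_j$ is either a common leaf or a common internal vertex. The hypothesis $(\operatorname L_i, \operatorname L_j)\in \operatorname{IP}_s(\mathbf R)$ says that, in the $\mathsf T_j$-labelling, the common leaves correspond exactly to the set $\mathbf R\subset \mathbf L$. Thus $\mathsf T_i\cap \mathsf T_j$ corresponds (via the $\mathsf T_j$-isomorphism) to some $\mathbf F\subset \mathbf T$ with $V(\mathbf F)\cap \mathbf L = \mathbf R$. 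For each such $\mathbf F$: $\mathsf T_j$ (with leaves $\mathsf L_j$ fixed) can be chosen in at most $n^\chi$ ways; and once $\mathsf T_j$ together with the embedding of $\mathbf F$ into it is fixed, the remaining internal vertices of $\mathsf T_i$ contribute at most $n^{\chi-|V(\mathbf F)\cap \mathbf Q|}$ further choices, where the $O(1)$ combinatorial factor for fitting $\mathbf F$ inside both labelled copies of $\mathbf T$ is absorbed into $\lesssim$. Multiplying by the joint edge probability $p_\eta^{2\zeta-|E(\mathbf F)|}$ and using $\operatorname{Cap}(\mathbf F)=|V(\mathbf F)\cap \mathbf Q|-\alpha_\eta |E(\mathbf F)|$ gives
\[
n^{2\chi-|V(\mathbf F)\cap \mathbf Q|}\,p_\eta^{2\zeta-|E(\mathbf F)|} \;=\; (n^\chi p_\eta^\zeta)^2\cdot n^{-\operatorname{Cap}(\mathbf F)}.
\]
Summing over the finitely many $\mathbf F\subset \mathbf T$ with $V(\mathbf F)\cap \mathbf L = \mathbf R$, the largest contribution comes from $\mathbf F = \mathbf F_{\mathbf R}$, which yields \eqref{eq:est-for-intersecting}.

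The main delicate point is the first-paragraph identification of $\widehat\Pb$-probabilities with bare $\Pb$-probabilities for witness trees; this relies critically on the tree structure of $\mathbf T$ (no leaf-to-leaf edges) and on the disjointness of the two vertex pools, and is what lets us sidestep a careful analysis of how witness trees might coincidentally share edges with the ``forced'' subgraph $\mathsf{GO}_{s-1}$. Once this is in place, the remainder is a routine union bound and exponent tally, and the bound indexed by $\mathbf F_{\mathbf R}$ emerges directly from the definition of $\operatorname{Cap}$ together with the fact that only finitely many subgraph patterns appear.
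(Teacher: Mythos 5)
Your proof is correct and arrives at the same bound as the paper, but you reorganize the final union bound in a cleaner, more direct way. The paper proves Lemmas~\ref{lem:est-for-disjoint} and~\ref{lem:est-for-intersecting} from a shared setup: after the FKG/independence step it first sums over the witness tree $\mathsf T$ for $\mathsf L_i$ (getting a factor $\lesssim n^\chi p_\eta^\zeta$ via Proposition~\ref{prop:one-point-est}), and then estimates $\Pb[\mathsf L_j\Join_\Gs\mathsf R_{s-1}\mid \mathsf L_i\Join_\Gs I_\Gs(\operatorname Q)]$ by splitting according to whether the second witness tree shares an edge with the first (\eqref{eq-to-cite-sums}). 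You instead make explicit the structural fact that every edge of any witness tree is incident to a non-leaf vertex in $I_\Gs(\operatorname R_{s-1}')$, hence never lies in $E(\mathsf{GO}_{s-1})$, so that under $\widehat\Pb$ the edge variables of both witness trees are i.i.d.\ Bernoulli$(p_\eta)$; you then do a flat double union bound over pairs $(\mathsf T_i, \mathsf T_j)$ grouped by the intersection pattern $\mathbf F\subset \mathbf T$ (read off in the $\mathsf T_j$-labelling), arriving directly at $n^{2\chi - |V(\mathbf F)\cap \mathbf Q|}\,p_\eta^{2\zeta - |E(\mathbf F)|} = (n^\chi p_\eta^\zeta)^2 n^{-\operatorname{Cap}(\mathbf F)}$ for each pattern. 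Both routes identify $\mathbf F_{\mathbf R}$ (the capacity-minimizer among subgraphs with $V(\mathbf F)\cap\mathbf L = \mathbf R$) as the dominant term. The observation that witness edges avoid $E(\mathsf{GO}_{s-1})$ is exactly what justifies the paper's ``follows from independence'' step, so you have not used anything beyond what the paper implicitly relies on; what you buy by making it explicit is that you can dispense entirely with conditional probabilities and the sharper $(1+o(1))$-machinery of Proposition~\ref{prop:one-point-est}, which is unnecessary here since only an order bound is claimed. One small point worth noting but not a gap: the case where the intersection has no edges (so $\mathbf F$ is the isolated vertex set $\mathbf R$, with $\operatorname{Cap}(\mathbf F)=0$) is included in your sum, and since $\operatorname{Cap}(\mathbf F_{\mathbf R})\le 0$ by minimality, it is dominated as required.
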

\begin{proof}[Proof of Proposition~\ref{prop:two-point-est}]
From Lemma~\ref{lem:est-for-disjoint}, \eqref{eq:break-of-sum-1} is upper-bounded by
\begin{align}
	\big[1+o(1)&\big]\sum_{(\operatorname L_i,\operatorname L_j)\in\operatorname {IP}_s(\emptyset)}\Pb[\mathsf L_i\Join_\Gs\mathsf R_{s-1}\mid  \mathcal F_{s-1}]\Pb[\mathsf L_j\Join_\Gs\mathsf R_{s-1}\mid \mathcal F_{s-1}]\\
	\le \big[1+o(1)&\big]\sum_{i,j=1}^l\Pb[\mathsf L_i\Join_\Gs\mathsf R_{s-1} \mid \mathcal{F}_{s-1}]\Pb[\mathsf L_j\Join_\Gs\mathsf R_{s-1} \mid \mathcal{F}_{s-1}]
\end{align}
which is $\big[1+o(1)\big]\big(\mathbb{E}\left(X_s\mid \mathcal{F}_{s-1}\right)\big)^2$. In addition, for each nonempty subset $\mathbf R\subset \mathbf L$, by (ii) in $\Gc_s^2$ and Lemma~\ref{lem:est-for-intersecting} we see that $\sum_{(\operatorname L_i,\operatorname L_j)\in\operatorname{IP}_s(\mathbf R)}\Pb[\mathsf L_i\Join_\Gs\mathsf R_{s-1},\mathsf L_j\Join_\Gs\mathsf R_{s-1}\mid\mathcal{F}_{s-1}]$ is bounded by a constant multiplies
\begin{align}
\nonumber &\ (np_\eta)^\zeta\times\mathbb D(\textbf{Span}(\mathbf R))\times (np_\eta)^{|E(\mathbf T)\setminus E(\mathbf{Span}(\mathbf R))|}\times \big(n^\chi p_\eta^\zeta\big)^2\times n^{- \operatorname {Cap}(\mathbf F_{\mathbf R})}\\
=&\ \big(n^{\chi+\zeta}p_\eta^{2\zeta}\big)^2\times n^{-|E(\textbf{Span}(\mathbf R))|+|V(\textbf{Span}(\mathbf R))\cap \mathbf Q|}\times n^{-2 \operatorname {Cap}(\mathbf F_{\mathbf R})}\,,	\label{eq:est-of-intersecting-sum}
\end{align}
where we used the fact that $\mathbb D(\mathbf{Span}(\mathbf R))=n^{\operatorname {Cap}(\mathbf{Span}(\mathbf R))-\operatorname  {Cap}(\mathbf F_{\mathbf R})}$. Suppose $\mathbf F_{\mathbf R}$ is a union of subtrees $\mathbf T_1,\dots,\mathbf T_r$ of $\mathbf T$. We note that
\[
|E(\mathbf{Span}(\mathbf R))|-|V(\mathbf{Span}(\mathbf R))\cap \mathbf Q| = 
|\mathbf R| - 1 \geq |\mathbf R| - r = \sum_{i=1}^r\big(|E(\mathbf T_i)|-|V(\mathbf T_i)\cap \mathbf Q|\big)\,,
\]
Thus, the term $n^{-|E(\mathbf{Span}(\mathbf R))|+|V(\mathbf{Span}(\mathbf R))\cap \mathbf Q|}\times n^{-2 \operatorname {Cap}(\mathbf F_{\mathbf R})}$ in \eqref{eq:est-of-intersecting-sum} is bounded by
\begin{align*}
	 \prod_{i=1}^r n^{-|E(\mathbf T_i)|+|V(\mathbf T_i)\cap \mathbf Q|-2 \operatorname {Cap}(\mathbf T_i)}
	=\prod_{i=1}^r n^{-|V(\mathbf T_i)\cap \mathbf Q|+(2\alpha_\eta-1)|E(\mathbf T_i)|}\,,
\end{align*}
which is $o(1)$ from  Lemma~\ref{lem:designing-tree} (iv). This shows that \eqref{eq:est-of-intersecting-sum} is $o\Big(\big(n^{\chi+\zeta}p_\eta^{2\zeta}\big)^2\Big)$ for each $\mathbf R$.  
Since the number of possible $\mathbf R$ is uniformly bounded in $n$, we complete the proof by combining with \eqref{eq:lower-bound-of-EXs}. 
\end{proof}
It remains to prove Lemma~\ref{lem:est-for-disjoint} and Lemma~\ref{lem:est-for-intersecting}. We note that for any two tuples $\mathsf L_i,\mathsf L_j\in \Pi\left(\operatorname{GC}_s\right)$, it holds
\begin{align}
&\ \Pb[\mathsf L_i\Join_{\Gs} \mathsf R_{s-1},\mathsf L_j\Join_\Gs\mathsf R_{s-1}\mid \mathcal{F}_{s-1}]=\widehat{\Pb}[\mathsf L_i\Join_\Gs\mathsf R_{s-1},\mathsf L_j\Join_\Gs\mathsf R_{s-1}\mid \mathcal A_s^1] \nonumber\\
\leq&\ \widehat{\Pb}[\mathsf L_i\Join_\Gs\mathsf R_{s-1},\mathsf L_j\Join_\Gs\mathsf R_{s-1}]= \Pb[\mathsf L_i\Join_\Gs\mathsf R_{s-1},\mathsf L_j\Join_\Gs\mathsf R_{s-1}]\,,
\end{align}
where the inequality follows from the FKG inequality and the last equality follows from independence. Then it is clear that $\Pb[\mathsf L_i\Join_\Gs\mathsf R_{s-1},\mathsf L_j\Join_\Gs\mathsf R_{s-1} \mid \mathcal F_{s-1}]$ is upper-bounded by
	\begin{align}
	&\sum_{\operatorname Q\in \mathfrak{A}(\operatorname R_{s-1}',\chi)}\Pb[\mathsf L_i\Join_{\Gs} I_\Gs(\operatorname Q),\mathsf L_j\Join_\Gs\mathsf R_{s-1}] \nonumber\\
	=&\sum_{\operatorname Q\in \mathfrak{A}(\operatorname R_{s-1}',\chi)}\Pb[\mathsf L_i\Join_\Gs I_\Gs(\operatorname Q)]\Pb[\mathsf L_j\Join_\Gs\mathsf R_{s-1}\mid \mathsf L_i\Join_\Gs I_\Gs(\operatorname Q)]\,.
	\end{align}
By Proposition~\ref{prop:one-point-est} and \eqref{eq-to-cite-prop-2.3-proof}, we have
	\begin{equation}
		\label{eq:prob=first-moment}
	\sum_{\operatorname Q\in \mathfrak{A}(\operatorname R_{s-1}',\chi)}\Pb[\mathsf L_i\Join_\Gs I_\Gs(\operatorname Q)]\le \big[1+o(1)\big]\Pb[\mathsf L_i\Join_\Gs\mathsf R_{s-1}\mid\mathcal{F}_{s-1}]\lesssim n^\chi p_\eta^\zeta\,.
	\end{equation}
	Thus it remains to show for each $\operatorname Q\in \mathfrak{A}(\operatorname R_{s-1}',\chi)$, we have
	\[
	\Pb[\mathsf L_j\Join_\Gs\mathsf R_{s-1} \mid \mathsf L_i\Join_\Gs I_\Gs(\operatorname Q)]\le\begin{cases}
		\big[1+o(1)\big]\Pb[\mathsf L_j\Join_\Gs\mathsf R_{s-1}]\,,&\text{ if }(\operatorname L_i,\operatorname L_j)\in\operatorname{IP}_s(\emptyset)\,;\\
		O\big(n^\chi p_\eta^\zeta\times n^{- \operatorname {Cap}(\mathbf F_{\mathbf R})}\big)\,,&\text{ if }(\operatorname L_i,\operatorname L_j)\in\operatorname{IP}_s(\mathbf R), \mathbf R\neq \emptyset\,.
	\end{cases}
	\]
	Fix such a tuple $\operatorname Q$ and denote $\mathsf T \subset \Gs$ the subtree that certifies $\{\mathsf L_i\Join_\Gs I_\Gs(\operatorname Q)\}$. Similarly for each $\operatorname Q'\in \mathfrak{A}(\operatorname R_{s-1}',\chi)$, we let $\mathsf T' \subset \Gs$ be the subtree that certifies $\{\mathsf L_j\Join_{\Gs} I_\Gs(\operatorname Q')\}$. Then from a union bound we get
	\begin{align}
	&\Pb[\mathsf L_j\Join_\Gs\mathsf R_{s-1} \mid \mathsf L_i\Join_\Gs I_\Gs(\operatorname Q)]\le\ \sum_{\operatorname Q'\in \mathfrak{A}(\operatorname R_{s-1}',\chi)}\Pb[\mathsf L_j\Join_\Gs I_\Gs(\operatorname Q')\mid \mathsf L_i\Join_\Gs I_\Gs(\operatorname Q)] \nonumber\\
	=&\ \sum_{\substack{\operatorname Q'\in \mathfrak{A}(\operatorname R_{s-1}',\chi)\\E(\mathsf T)\cap E(\mathsf T')=\emptyset}}\Pb[\mathsf L_j\Join_\Gs I_\Gs(\operatorname Q')]+\sum_{\substack{\operatorname Q'\in \mathfrak{A}(\operatorname R_{s-1}',\chi)\\E(\mathsf{T})\cap E(\mathsf T')\neq \emptyset}}\Pb[\mathsf L_j\Join_\Gs I_\Gs(\operatorname Q')\mid \mathsf L_i\Join_\Gs I_\Gs(\operatorname Q)]\,,\label{eq-to-cite-sums}
	\end{align}
where the equality follows from independence. 

For any subset $\mathbf R \subset \mathbf L$ (possibly $\mathbf R = \emptyset$), we define $\mathcal P(\mathbf R)$ as the collection of nonempty subgraphs $\mathbf F \subset \mathbf T$ with $V(\mathbf F)\cap \mathbf L=\mathbf R$. 
\begin{proof}[Proof of Lemma~\ref{lem:est-for-disjoint}]
For the case $(\operatorname L_i,\operatorname L_j)\in\operatorname {IP}_s(\emptyset)$, we have $\mathsf L_i\cap \mathsf L_j=\emptyset$. The first sum in \eqref{eq-to-cite-sums} is bounded by $\big[1+o(1)\big]\Pb[\mathsf L_j\Join_\Gs\mathsf R_{s-1}\mid \mathcal{F}_{s-1}]$ for the same reason as \eqref{eq:prob=first-moment}. As for the second sum in \eqref{eq-to-cite-sums}, it can be upper-bounded by
\[
\sum_{\mathbf F\in \mathcal P(\emptyset)}\sum_{\substack{{\operatorname Q'\in \mathfrak{A}(\operatorname R_{s-1}',\chi)}\\ \mathsf T \cap \mathsf T'\cong \mathbf F}}\Pb[\mathsf L_j\Join_{\Gs}I_\Gs(\operatorname Q')\mid \mathsf L_i\Join_\Gs I_\Gs(\operatorname Q)]\,.
\]
For each $\mathbf F\in \mathcal P(\emptyset)$, it is readily to see the second summation above 
is bounded by
\[
n^{|\mathbf Q\setminus V(\mathbf F)|}p_\eta^{|E(\mathbf T)\setminus E(\mathbf F)|}=n^\chi p_\eta^\zeta\times n^{-|V(\mathbf F)|}p_\eta^{-|E(\mathbf F)|}=o\big(n^\chi p_\eta^\zeta\big)=o\big(\Pb[\mathsf L_j\Join_\Gs\mathsf R_{s-1}\mid\mathcal{F}_{s-1}]\big)\,,
\] 
where we used the fact that $|V(\mathbf F)|>|E(\mathbf F)|$ for any $\mathbf F\in \mathcal P(\emptyset)$ since $\mathbf F$ is a forest. Since the cardinality of $\mathcal P(\emptyset)$ is uniformly bounded in $n$, this concludes Lemma~\ref{lem:est-for-disjoint}.
\end{proof}
\begin{proof}[Proof of Lemma~\ref{lem:est-for-intersecting}]
 For the case $(\operatorname L_i,\operatorname L_j)\in\operatorname{IP}_s(\mathbf R)$ with $\mathbf R\neq \emptyset$, similarly, the first sum in \eqref{eq-to-cite-sums} remains to be $O(n^\chi p_\eta^\zeta)$. Note that the second sum can be expressed as
\[
\sum_{\mathbf F\in \mathcal P(\mathbf R)}\sum_{\substack{{\operatorname Q'\in \mathfrak{A}(\operatorname R_{s-1}',\chi)}\\ \mathsf T\cap \mathsf T'\cong \mathbf F}}\Pb[\mathsf L_j\Join_\Gs I_\Gs(\operatorname Q')\mid \mathsf L_i\Join_\Gs I_\Gs(\operatorname Q)]\le O\big(n^\chi p_\eta^\zeta\times \max_{\mathbf F\in \mathcal P(\mathbf R)}n^{-\operatorname {Cap}(\mathbf F)}\big)\,.
\]
Since $\operatorname {Cap}(\mathbf F_\mathbf R) = \min_{\mathbf F\in \mathcal P(\mathbf R)} \operatorname {Cap}(\mathbf F)$, this yields Lemma~\ref{lem:est-for-intersecting}.
\end{proof}

\subsection{Proof of Proposition~\ref{prop:good-events-are-typical}}\label{sec:typical-good-event}

We start with some notations. For any simple graph $\mathbf H$ and $\mathbf R \subset V(\mathbf H)$ with $|\mathbf R|=r\ge 1$, and any $r$-tuple $\operatorname I \in \mathfrak{A}([n],r)$, we define $\operatorname{EXT}(\operatorname I;\mathbf R,\mathbf H)$ to be the collection of subgraphs $H$ in $  G$ satisfying the following condition: there is an isomorphism $\phi:\mathbf H\to H$ such that $\phi(\mathbf R)= ( v_i)_{i\in \operatorname I}$ (note that here the equality is equal in the sense of a tuple, not just in the sense of a set). In addition, we let  $\operatorname{Ext}(\operatorname I;\mathbf R,\mathbf H) =  |\operatorname{EXT}(\operatorname I;\mathbf R,\mathbf H)|$. For a pair $(\mathbf R,\mathbf H)$, we say it is a \emph{dense pattern}, if for any subgraph $\mathbf H'\subsetneq \mathbf H$ with $\mathbf R\subset V(\mathbf H')$,
\begin{equation}\label{eq-def-dense-pattern}
|V(\mathbf H)\setminus V(\mathbf H')|<\alpha_\eta|E(\mathbf H)\setminus E(\mathbf H')|.
\end{equation}
We say $(\mathbf R, \mathbf H)$ is a \emph{sparse pattern}, if for any subgraph $\mathbf H'\subset \mathbf H$ with $\mathbf R\subset V(\mathbf H')$ and $E(\mathbf H') \neq \emptyset$,
\begin{equation}\label{eq-def-sparse-pattern}
|V(\mathbf H')\setminus\mathbf R|>\alpha_\eta|E(\mathbf H')|. 
\end{equation}
Note that dense and sparse patterns are mutually exclusive (this can be checked by proof of contradiction with taking $\mathbf H' = \mathbf R$ in \eqref{eq-def-dense-pattern} and taking $\mathbf H' = \mathbf H$ in \eqref{eq-def-sparse-pattern}) but they are not mutually complementary, and in addition sparse pattern corresponds to the $\alpha_\eta$-safe extension in \cite{SP90}. Recall \eqref{eq:def-of-M(T)} for the definition of $\mathbb D(\mathbf T_0)$ for $\mathbf T_0\subset \mathbf T$. We introduce yet another good event $\Gc$ as follow:
\begin{defn}
	\label{def:good-event}
	Fix a large constant $\kappa_4=\kappa_4(\alpha_\eta,\mathbf T)$ which will be determined later. Define $\Gc$ as the event that  
	for any $\emptyset\neq \mathbf R\subset \mathbf L$ and any tree $\mathbf T_0 \subset \mathbf T$ with $V(\mathbf T_0)\cap \mathbf L =\mathbf R$, 
	\begin{equation}
		\label{eq:ext-bound}
		\max_{\operatorname I\in \mathfrak{A}([n],|\mathbf R|)}\operatorname {Ext}(\operatorname I;\mathbf R,\mathbf T_0)\le \kappa_4\mathbb D(\mathbf T_0)\,.
	\end{equation}
\end{defn}
\begin{lemma}
	\label{lem:G-is-typical}
	For some large constant $\kappa_4$, it holds that $\Pb[\Gc^c]=o(1)$.
\end{lemma}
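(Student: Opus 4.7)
The plan is to reduce the statement, by a union bound over the uniformly-in-$n$ bounded number of isomorphism classes of pairs $(\mathbf R,\mathbf T_0)$ with $V(\mathbf T_0)\cap \mathbf L = \mathbf R$, to showing that for each fixed pair one has $\max_{\operatorname I \in \mathfrak A([n],|\mathbf R|)}\operatorname{Ext}(\operatorname I;\mathbf R,\mathbf T_0)\le \kappa_4\mathbb D(\mathbf T_0)$ with probability $1-o(1)$. A direct first-moment computation gives $\mathbb E[\operatorname{Ext}(\operatorname I;\mathbf R,\mathbf T_0)]\asymp n^{|V(\mathbf T_0)|-|\mathbf R|}p_\eta^{|E(\mathbf T_0)|}=n^{\operatorname{Cap}(\mathbf T_0)}$, and the trivial choice $\mathbf F_0=\mathbf R$ (edgeless) is a valid competitor in the minimization defining $\mathbb D(\mathbf T_0)$, so $\mathbb D(\mathbf T_0)\ge n^{\operatorname{Cap}(\mathbf T_0)}$. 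Thus the target bound matches the order of the mean when $\mathbf F_0=\mathbf R$, and is strictly larger otherwise.

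The core of the argument is a decomposition based on the minimizer $\mathbf F_0$. Any copy $H$ of $\mathbf T_0$ extending $(v_i)_{i\in \operatorname I}$ restricts to an embedding of $\mathbf F_0$ extending $\mathbf R$, together with independent embeddings of each component of $\mathbf T_0\setminus_\star \mathbf F_0$ (using the operation introduced in the proof of Lemma~\ref{lem:designing-tree}) rooted at its attachment vertices in $V(\mathbf F_0)$. The minimality of $\operatorname{Cap}(\mathbf F_0)$, combined with the irrationality of $\alpha_\eta$ provided by Lemma~\ref{lem:designing-tree}, guarantees that each component of $\mathbf T_0\setminus_\star \mathbf F_0$ is a \emph{strict} sparse pattern in the sense of \eqref{eq-def-sparse-pattern} with respect to its roots in $V(\mathbf F_0)$. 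For such sparse patterns I would invoke the Spencer--Penrose extension theorem \cite{SP90}, which yields a stretched-exponential tail bound for the event that the extension count deviates from its mean by a constant factor, uniformly over all choices of root images. This tail is strong enough to absorb the $n^{|\mathbf R|}$ polynomial count of tuples $\operatorname I$. Consequently, with high probability and uniformly over $\operatorname I$ and over embeddings of $\mathbf F_0$ from $(v_i)_{i\in \operatorname I}$, the number of extensions to $\mathbf T_0$ is at most a constant multiple of $n^{\operatorname{Cap}(\mathbf T_0)-\operatorname{Cap}(\mathbf F_0)}=\mathbb D(\mathbf T_0)$.

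It then remains to control the number of $\mathbf F_0$-embeddings themselves, that is to show $\operatorname{Ext}(\operatorname I;\mathbf R,\mathbf F_0)=O(1)$ uniformly in $\operatorname I$ with high probability. When $\mathbf F_0=\mathbf R$ this is trivial. Otherwise, minimality combined with the irrationality of $\alpha_\eta$ forces $\operatorname{Cap}(\mathbf F_0)<0$ strictly, and I would bound a sufficiently high moment $\mathbb E[\operatorname{Ext}(\operatorname I;\mathbf R,\mathbf F_0)^k]$ by summing over overlap patterns of $k$-tuples of copies of $\mathbf F_0$ sharing the root set $\mathbf R$. The strict negativity of $\operatorname{Cap}$ on every non-trivial subgraph forces each overlap pattern to have negative total capacity, so this moment decays as a negative power of $n$ for $k$ large enough. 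A Markov estimate combined with a union bound over $\operatorname I$ then yields a deterministic $O(1)$ bound on $\operatorname{Ext}(\operatorname I;\mathbf R,\mathbf F_0)$, and multiplying with the earlier sparse-extension estimate completes the proof.

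The main obstacle is the uniform-in-$\operatorname I$ concentration of sparse extension counts, which requires a quantitatively strong tail bound such as the Spencer--Penrose estimate (or alternatively Kim--Vu polynomial concentration). The role of the irrationality of $\alpha_\eta$, carefully arranged in Lemma~\ref{lem:designing-tree}, is essential throughout: it upgrades the non-strict capacity inequalities entering the definitions of sparse and dense patterns into strict ones with a fixed polynomial gap, and this gap is what lets every tail probability survive the union bounds at each stage.
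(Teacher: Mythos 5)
Your decomposition agrees with the paper's in its opening moves: reduce to a fixed pair $(\mathbf R,\mathbf T_0)$ by a union bound, pick $\mathbf F_0\subset\mathbf T_0$ minimizing $\operatorname{Cap}$, and invoke Spencer's extension estimate from \cite{SP90} for the sparse components of $\mathbf T_0\setminus_\star\mathbf F_0$. The gap is in the treatment of the $\mathbf F_0$-part via ordinary high moments. You claim $\mathbb{E}[\operatorname{Ext}(\operatorname I;\mathbf R,\mathbf F_0)^k]$ decays as a negative power of $n$ for $k$ large, enough to absorb the $n^{|\mathbf R|}$-fold union bound over $\operatorname I$. This cannot work: $\operatorname{Ext}$ is a non-negative integer, so $\operatorname{Ext}^k\ge\operatorname{Ext}$ and hence $\mathbb{E}[\operatorname{Ext}^k]\ge\mathbb{E}[\operatorname{Ext}]\asymp n^{\operatorname{Cap}(\mathbf F_0)}$; more to the point, the diagonal contribution (all $k$ copies coinciding) shows the moments plateau at $\Theta(n^{\operatorname{Cap}(\mathbf F_0)})$ rather than improve with $k$. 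The union bound over $\operatorname I$ would then require $n^{|\mathbf R|+\operatorname{Cap}(\mathbf F_0)}=n^{|V(\mathbf F_0)|-\alpha_\eta|E(\mathbf F_0)|}=o(1)$, but this exponent is strictly positive because $\mathbf F_0$ is a forest (so $|E(\mathbf F_0)|<|V(\mathbf F_0)|$) and $\alpha_\eta<1$. Your auxiliary claim that ``$\operatorname{Cap}$ is strictly negative on every non-trivial subgraph'' also does not follow from minimality, which only gives $\operatorname{Cap}(\mathbf F')\ge\operatorname{Cap}(\mathbf F_0)$ for $\mathbf F'\subset\mathbf T_0$ containing $\mathbf R$.

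The paper handles this part differently. It decomposes $\mathbf F_0$ into its components $\mathbf T_1,\dots,\mathbf T_l$, each a dense pattern, and argues that if $\operatorname{Ext}(\operatorname I;\mathbf R_i,\mathbf T_i)\ge\kappa$ for \emph{some} $\operatorname I$, then the union $K_\kappa$ of $\kappa$ pairwise distinct copies is a labeled subgraph of $G$ of bounded size with $P(K_\kappa):=|V(K_\kappa)|-\alpha_\eta|E(K_\kappa)|<-1$ once $\kappa$ is chosen large, since each genuinely new copy strictly decreases $P$ by a fixed $\delta>0$ (the dense-pattern inequality, made strict by the irrationality of $\alpha_\eta$) and one cannot repeat the same copy indefinitely. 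Because $P(K_\kappa)$ does not subtract $|\mathbf R|$, the union bound over labeled subgraphs $K$ already absorbs the choice of $\operatorname I$, and the expected number of bad $K$'s is $O(n^{-1})$. In effect this is a factorial-moment count over pairwise distinct copies, and the distinction from an ordinary moment is essential: the coinciding-copies term that dominates $\mathbb{E}[\operatorname{Ext}^k]$ carries no information about the event $\{\operatorname{Ext}\ge\kappa\}$. Replacing your moment step with this argument (or with the equivalent factorial-moment computation) closes the gap.
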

\begin{proof}
	 Since the number of pairs $(\mathbf R,\mathbf T_0)$ is bounded in $n$, we only need to prove \eqref{eq:ext-bound} holds with probability tending to 1 for any fixed pair. For a fixed pair $(\mathbf R,\textbf{T}_0)$, we take a subgraph $\textbf{F}_0 \subset \mathbf T_0$ with $V(\mathbf F_0)\cap \mathbf L = \mathbf R$ such that $ \operatorname{Cap}(\textbf{F}_0)$ is minimized. Note that subgraphs in $\operatorname {EXT}(\operatorname I;\mathbf R,\mathbf T_0)$ can be ``constructed'' as follows:\\
 \emph{Step 1.} choose a subgraph $F\cong \textbf{F}_0$ in $G$ with fixed leaves in $\mathbf R$;\\
 \emph{Step 2.} add vertices and edges to $F$ and get a final subgraph $T\cong \textbf{T}$ in $G$.\\
 Since both $\mathbf{F}_0$ and $\mathbf{T}_0\setminus_\star \mathbf{F}_0$ are forests, we may assume $\mathbf{T}_1,\ldots,\mathbf{T}_l$ are the components of $\mathbf{F}_0$, and $\mathbf{T}_1',\ldots,\mathbf{T}_m'$ are the components of $\mathbf{T}_0\setminus_\star \mathbf{F}_0$. Denote $\mathbf R_i=V(\mathbf T_i)\cap \mathbf R$ for $1\le i\le l$ and $\mathbf R_j'=V(\mathbf F_0)\cap V(\mathbf T_j')$ for $1\le j\le m$. By the minimality of  $\operatorname {Cap}(\mathbf{F}_0)$ and the irrationality of $\alpha_\eta$, we see that $(\mathbf R_i,\textbf{T}_i)$ is a dense pattern for each $1\le i\le l$ and $(\mathbf R_j',\textbf{T}_j')$ is a sparse pattern for each $1\le j\le m$. It then suffices to show the following two items.
\begin{itemize}
	\item For any fixed dense pattern $(\mathbf R,\textbf{H})$, there exists a large constant $\kappa$ such that probability $1-o(1)$,
	\begin{equation}\label{eq-dense-patter-bound}
	\max_{\operatorname I\in \mathfrak A([n],|\mathbf R|)}\operatorname{Ext}(\operatorname I;\mathbf R,\textbf{H})\le \kappa\,.
	\end{equation}
	\item For any fixed sparse pattern $(\mathbf R,\textbf H)$, there exists a large constant $\kappa$ such that with probability $1-o(1)$,
	\begin{equation}\label{eq-sparse-pattern-bound}
	\max_{\operatorname I\in\mathfrak A([n],|\mathbf R|)}\operatorname {Ext}(\operatorname I;\mathbf R,\textbf{H})\le \kappa n^{|V(\textbf{H})\setminus \mathbf R|-\alpha_\eta |E(\textbf{H})|}\,.
	\end{equation}
\end{itemize}
 By \cite[Corollary 4]{SP90} (where the condition $\alpha_\eta$-safe is equivalent to the sparsity of $(\mathbf R,\mathbf H)$), we see that \eqref{eq-sparse-pattern-bound} holds. Thus, it remains to prove \eqref{eq-dense-patter-bound}. To this end, we will use the following intuition: for a dense pattern $(\mathbf R,\textbf{H})$, if $\operatorname{Ext}(\operatorname I;\mathbf R,\textbf{H})$ is large for some $\operatorname I\in \mathfrak{A}([n],|\mathbf R|)$, then there exists a subgraph $K\subset G$ with bounded size such that $|V(K)|-\alpha_\eta |E(K)|<-1$. 
We next elaborate this precisely. Since $(\mathbf R,\textbf{H})$ is dense, we have
\begin{equation}
	\label{eq:def-delta}
-\delta\stackrel{\operatorname {def}}{=}\max_{\substack{\textbf{H}'\subsetneq \textbf{H},\mathbf R\subset V(\textbf{H}')}}\big(|V(\textbf H)\setminus V(\textbf H')|-\alpha_\eta|E(\textbf H)\setminus E(\textbf H')|\big)<0\,.
\end{equation}
If $\operatorname {Ext}(\operatorname I;\mathbf R,\textbf H)$ exceeds a large integer $\kappa$ for some $\operatorname I\in \mathfrak{A}([n],|\mathbf R|)$, then there exist $\kappa$ subgraphs $H_1,\dots, H_\kappa\cong \mathbf H$ in $G$, such that each isomorphism from $\mathbf H$ to $H_i$ maps $\mathbf R$ to  $(v_i)_{i\in \operatorname I}$. Let $K_i= H_1\cup H_2\cup \cdots\cup H_i$ for $1\le i\le \kappa$. We note that whenever $K_{i+1}\setminus K_i\neq \emptyset$, each of its component is isomorphic to some $\mathbf H\setminus_\star \mathbf H'$ with $\mathbf H'\subset\mathbf H$ and $\mathbf R\subset V(\mathbf H')$. Denoting $P(K_i)=|V(K_i)|-\alpha_\eta|E(K_i)|$, we then deduce from \eqref{eq:def-delta} that 
\begin{itemize}
	\item $P(K_{i+1})\le P(K_i)$ for any $i\ge 1$, with equality holds if and only if $K_{i+1}= K_i$,
	\item If $K_{i+1}\neq K_i$, then $P(K_{i+1})\le P(K_i)-\delta$.
\end{itemize}
In addition, for each fixed $K_i$, there exists a large integer $N=N(K_i)$ depending only on the size of $K_i$ such that $K_{i+N}\neq K_i$. Combined with the fact that $P(K_1)=|V(H)|-\alpha_\eta|E(H)|$ is bounded in $n$, we see  $P(K_\kappa)<-1$ for $\kappa$ large enough. Clearly, we also have $|V(K_\kappa)|\le \kappa|V(\textbf H)|$. Therefore, 
\begin{equation}
	\label{eq:dense-pattern-union-bound}
\begin{aligned}
&\ \Pb\big[\max_{\operatorname I\in \mathfrak A([n],|\mathbf R|)}\operatorname {Ext}(\operatorname I;\mathbf R,\textbf H)\ge \kappa\big]\\
 \leq &\ \Pb\big[\exists\ {K}\subset G \text{ with }|V(K)|\le \kappa|V(\mathbf H)|\text{ and }P(K)<-1\big]\\
\leq & \sum_{\substack{|V(\textbf K)|\le \kappa|V(\textbf H)|\\ P(\mathbf{K})<-1}} n^{|V(\textbf K)|}p_\eta^{|E(\textbf K)|}=\sum_{\substack{|V(\textbf K)|\le \kappa|V(\textbf H)|\\ P(\mathbf{K})<-1}}  n^{P(\textbf K)}=O\big(n^{-1}\big)\,.
\end{aligned}
\end{equation}
This proves \eqref{eq-dense-patter-bound}, and thus completes the proof of the lemma.
\end{proof}
 
We will prove Proposition~\ref{prop:good-events-are-typical} by induction. Assuming for some fixed $1\le s\le S$ it holds uniformly that for all $1\leq t\leq s-1$,
\begin{equation}\label{eq-G-good-induction-assumption}
\Pb\big[(\Gc_t^1)^c\big]+\Pb\big[(\Gc_t^2)^c\big]=o(1)\,,
\end{equation} 
we will show that \eqref{eq-G-good-induction-assumption} holds for $t=s$ (This will in particular prove the base case of $t=1$). Recall definition \eqref{def:X_s} for $X_t,1\le t\le s-1$, the induction hypothesis \eqref{eq-G-good-induction-assumption} is used in the following lemma:
 	\begin{lemma}
 		\label{lem:good-steps}
 		Assuming \eqref{eq-G-good-induction-assumption} for $1\leq t\leq s-1$, we have that
 		\begin{equation}
 		\label{eq;good-step-est}
 		\Pb[X_t<\log n]=o(1)\text{ uniformly for all }1\le t\le s-1\,.
 		\end{equation}
 	\end{lemma}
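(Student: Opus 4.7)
The plan is to combine the first- and second-moment estimates from Propositions~\ref{prop:one-point-est} and \ref{prop:two-point-est} with the induction hypothesis via Chebyshev's inequality. Fix $1\le t\le s-1$ and condition on a realization of $\mathcal F_{t-1/2}$ lying in $\Gc_t^1\cap\Gc_t^2$.

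First I would lower-bound the conditional first moment. By Definition~\ref{def:G-s-2}(i), on $\Gc_t^2$ we have $|\operatorname{GC}_t|\ge\kappa_2(np_\eta)^\zeta$, and by \eqref{eq:one-point-est} on $\Gc_t^1$ each $t$-good tuple $L$ satisfies $\Pb[\Pi(L)\Join_{\Gs}\mathsf R_{t-1}\mid\mathcal F_{t-1/2}]\ge c_1n^\chi p_\eta^\zeta$. Summing over the $|\operatorname{GC}_t|$ tuples gives
$$\mathbb E[X_t\mid\mathcal F_{t-1/2}]\gtrsim n^{\chi+\zeta}p_\eta^{2\zeta}=n^{\chi-(2\alpha_\eta-1)\zeta},$$
which is a positive power of $n$ by Lemma~\ref{lem:designing-tree}(i), hence in particular much larger than $\log n$ for $n$ sufficiently large.

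Next, Proposition~\ref{prop:two-point-est} gives $\mathrm{Var}(X_t\mid\mathcal F_{t-1/2})\le o(1)\cdot(\mathbb E[X_t\mid\mathcal F_{t-1/2}])^2$, so Chebyshev's inequality yields
$$\Pb\Big[X_t<\tfrac12\mathbb E[X_t\mid\mathcal F_{t-1/2}]\,\Big|\,\mathcal F_{t-1/2}\Big]=o(1).$$
Since $\tfrac12\mathbb E[X_t\mid\mathcal F_{t-1/2}]\ge\log n$ on $\Gc_t^1\cap\Gc_t^2$ for all large $n$, integrating over realizations on the good event gives $\Pb[\{X_t<\log n\}\cap\Gc_t^1\cap\Gc_t^2]=o(1)$.

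Finally, invoking the induction hypothesis $\Pb[(\Gc_t^1)^c]+\Pb[(\Gc_t^2)^c]=o(1)$ removes the good event restriction and concludes $\Pb[X_t<\log n]=o(1)$ uniformly for $1\le t\le s-1$. There is no serious obstacle here since the hard work is done in Propositions~\ref{prop:one-point-est} and \ref{prop:two-point-est}; the only mild point to verify is that the $o(1)$ terms in these propositions, as well as the lower bound $c_1 n^\chi p_\eta^\zeta$, are indeed uniform over $t$ and over realizations of $\mathcal F_{t-1/2}$ on $\Gc_t^1\cap\Gc_t^2$, which is exactly the form in which both propositions are stated.
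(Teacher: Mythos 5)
Your proof is correct and follows essentially the same route as the paper: restrict to $\Gc_t^1\cap\Gc_t^2$ via the induction hypothesis, note $\mathbb E[X_t\mid\mathcal F_{t-1/2}]\gg\log n$ there, and use the near-equality of first and second conditional moments from Proposition~\ref{prop:two-point-est} to get concentration. The paper invokes the Paley--Zygmund inequality where you invoke Chebyshev; these are interchangeable here and the substitution is purely cosmetic.
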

 \begin{proof}
 	From the induction hypothesis we see $\Pb[X_t<\log n]$ is no more than $$ \Pb\big[(\Gc_t^1)^c\big]+\Pb\big[(\Gc_t^2)^c\big]+\Pb\big[X_t<\log n\mid \Gc_t^1\cap \Gc_t^2\big]=o(1)+\Pb\big[X_t<\log n\mid \Gc_t^1\cap \Gc_t^2\big]\,.$$
 	Note that $\mathbb E[X_t\mid \Gc_t^1\cap \Gc_t^2]\gg \log n$, by applying the Paley-Zygmund inequality and then make use of Proposition~\ref{prop:two-point-est}, we get the second term above is also $o(1)$. The estimates are uniform for all $1\le t\le s-1$ and the result follows. 
 \end{proof}
To show the desired result, we cannot condition on the full information of $\mathcal{F}_{s-1}$. Instead, we turn to $\mathcal{I}_{s-1}=\sigma\{\operatorname M_{s-1}, \pi(\operatorname M_{s-1}),\operatorname{MT}_t,1\le t\le s-1\}$. To the contrary of $\mathcal{F}_{s-1}$, the information of $\bigcup_{1\le t\le s-1}\operatorname {EXP}_t$ is not fully contained in $\mathcal I_{s-1}$. Recall the set $\mathtt M_s$ in Algorithm~\ref{algo:greedy} and note that $\mathtt M_s$ can be determined from $\mathcal I_{s-1}$. In addition, by our choice $\kappa_0>4\zeta/\eta$ and the fact that each $\operatorname{MT}_t$ contains no more than $2\zeta$ elements in $[n]$, it holds deterministically that $|\mathtt{M}_s|\ge {\eta n}/2$. Denote $V_{R}=\{v_i:i\in \operatorname R_{s-1}\}$ (recall $\operatorname R_{s-1} = [n]\setminus \operatorname M_{s-1}$) and $  V_{\mathtt M}=\{v_i:i\in \mathtt M_{s}\}$. For any $v\in V$, let $N_{R}(v)$ and $N_{\mathtt M}(v)$ be the numbers of neighbors of $v$ in $V_{R}$ and $V_{\mathtt M}$, respectively. The next lemma describes the properties of the conditional distribution of $G$ given $\mathcal I_{s-1}$, which will be useful later.
\begin{lemma}
	\label{lem:dominated-by-ER}
	Recall $E_0$ as in \eqref{def:E0 and Es0}. For any realization of  $\mathcal I_{s-1}$, the graph $G$ conditioned on $\mathcal I_{s-1}$ is given by $ {GO}_{s-1}\cup  {GO}^\dagger_{s-1}$, where $ {GO}^\dagger_{s-1}$ is a graph on $V$ with edges in $  E_0\setminus E( {GO}_{s-1})$ which
	is stochastically dominated by an \ER graph on $(V,  E_0\setminus E({GO}_{s-1}))$ with edge density $p_\eta$ (i.e., each edge in $E_0\setminus E({GO}_{s-1})$ is preserved with probability $p_\eta$).
\end{lemma}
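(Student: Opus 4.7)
The plan is to compute the conditional law of $G$ given $\mathcal{I}_{s-1}$ via Bayes' rule and then invoke a Holley/FKG stochastic-domination argument. A realization $\iota$ of $\mathcal{I}_{s-1}$ deterministically produces $GO_{s-1}$ from the recorded triples $\operatorname{MT}_t = (\operatorname{L}_t, \operatorname{Q}_t, \operatorname{Q}'_t)$, and on $\{\mathcal{I}_{s-1} = \iota\}$ every edge of $GO_{s-1}$ must lie in $G$, so I would decompose $G = GO_{s-1} \cup GO^\dagger_{s-1}$ with $GO^\dagger_{s-1}$ supported on $E_0 \setminus E(GO_{s-1})$. Since $G$ and $\Gs$ are independent, the function $f(g^\dagger) := \Pb[\mathcal{I}_{s-1} = \iota \mid G = GO_{s-1} \cup g^\dagger]$ is a pure $\Gs$-probability, and Bayes' rule gives
\[
\Pb[GO^\dagger_{s-1} = g^\dagger \mid \mathcal{I}_{s-1} = \iota] \;\propto\; \mu(g^\dagger)\,f(g^\dagger),
\]
where $\mu$ is the product Bernoulli$(p_\eta)$ measure on $\{0,1\}^{E_0 \setminus E(GO_{s-1})}$. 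The standard FKG consequence---that $\mu f / \int f\, d\mu$ is stochastically dominated by $\mu$ whenever $f$ is coordinate-wise non-increasing and $\mu$ is a product measure---then reduces the lemma to verifying that $f$ is non-increasing.

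Verifying this monotonicity is the main body of the argument. I would prove the deterministic statement: whenever $g \subseteq g'$ are subgraphs of $(V, E_0)$ both containing $GO_{s-1}$ and $\Gs$ is fixed, if Algorithm~\ref{Alg} run on $(g', \Gs)$ produces history $\iota$, then so does Algorithm~\ref{Alg} run on $(g, \Gs)$. This goes by induction on $t = 1, \ldots, s-1$, maintaining that the two runs share the same state $(\operatorname{M}_t,\,\pi|_{\operatorname{M}_t},\,\operatorname{MT}_t)$ after step $t$. The inductive step rests on: (i) monotonicity of $\operatorname{CAND}_t$ in the underlying graph, since it is defined by the existence of a tree embedding; (ii) the fact that $\operatorname{L}_t(\iota) \in \operatorname{CAND}_t(g)$, because its certifying tree is part of $GO_{s-1}\subseteq g$; (iii) the algorithm scans $\operatorname{CAND}_t$ in the $\prec$-order and accepts the first $\Gs$-successful tuple, so the $g'$-run's choice of $\operatorname{L}_t(\iota)$ forces every $\operatorname{L}\prec \operatorname{L}_t(\iota)$ in $\operatorname{CAND}_t(g')\supseteq\operatorname{CAND}_t(g)$ to fail the $\Gs$-matching, whence the $g$-run also selects $\operatorname{L}_t(\iota)$; (iv) under the convention that step~19 of Algorithm~\ref{Alg} picks the $\prec_\chi$-first valid $\chi$-tuple, the selection $\operatorname{Q}_t(\iota)$ is preserved on passing to $g$, because any $\prec$-earlier valid option in $g$ would also be valid in $g'$ and contradict the $g'$-choice. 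The failure branch $I_t(\iota)=0$ is immediate from (i), and the ``minimal element'' rule for $\pi(u_t)$ on failure keeps the two runs in lockstep.

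The main obstacle I expect is point (iv): step~19 of Algorithm~\ref{Alg} is written as ``find a $\chi$-tuple'' without a tie-break, and the monotonicity argument requires fixing a deterministic selection rule that is compatible with restriction to subgraphs. Declaring that the $\prec_\chi$-first valid $\chi$-tuple is chosen does the job, though one must check that this choice does not disrupt any earlier appeal to $\prec$ in the paper. With that convention in place the induction is routine, and the final step applies the FKG inequality $\mathbb{E}_\mu[f\,\mathbf{1}_{\mathcal{B}}] \le \mathbb{E}_\mu[f]\,\mu(\mathcal{B})$ for decreasing $f$ and increasing events $\mathcal{B}$ to obtain the desired stochastic domination of $GO^\dagger_{s-1}$ by the product Bernoulli$(p_\eta)$ measure $\mu$.
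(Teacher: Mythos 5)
Your argument rests on the same key insight as the paper's: the trajectory $\mathcal I_{s-1}$ is a monotone non-increasing function of $G$ on $E_0\setminus E(GO_{s-1})$ for every fixed $\Gs$, because removing edges only shrinks each $\operatorname{CAND}_t$ while the $\prec$-minimal successful candidate and its certifying subgraph (contained in $GO_{s-1}$) survive. The packaging differs slightly: the paper runs the argument one edge at a time, deducing $\Pb[e\notin E\mid \mathcal I_{s-1},\omega_{\setminus e}]\ge 1-p_\eta$ for every $e$ and $\omega_{\setminus e}$ and then invoking the standard single-site criterion for stochastic domination, whereas you tilt the whole product measure by the decreasing function $f(g^\dagger)=\Pb[\mathcal I_{s-1}=\iota\mid G=GO_{s-1}\cup g^\dagger]$ and apply FKG globally. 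Both are valid; yours is arguably cleaner conceptually, while the paper's is shorter to state. One useful contribution of your write-up: you explicitly flag that step 19 of Algorithm~\ref{Alg} (``Find a $\chi$-tuple $\operatorname Q$\ldots'') needs a deterministic, restriction-compatible tie-break rule for the trajectory monotonicity to hold, and the $\prec_\chi$-first convention you propose resolves it; the paper's proof implicitly relies on some such convention without spelling it out, so this is a legitimate point of rigor you caught.
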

\begin{proof}
	It is clear that $ {GO}_{s-1} \subset G$ under such conditioning, and thus it remains to understand the behavior for the remaining graph ${GO}^\dagger_{s-1}$. 
	For any $e \in E_0\setminus E( {GO}_{s-1})$ and any realization $\omega_{\setminus  e}$ for edges in $ E_0\setminus E( {GO}_{s-1})$ except $e$, note that if $\omega_{\setminus  e}$ together with $e\in G$ (as well as $ {GO}_{s-1} \subset E$) yields the realization $\mathcal I_{s-1}$, then $\omega_{\setminus e}$ together with $e\not\in E$ also yields the realization $\mathcal I_{s-1}$. Therefore, 
	$\Pb[e\not\in E \mid \mathcal I_{s-1}, \omega_{\setminus e}] \geq 1 - p_\eta$, completing the proof.
\end{proof}
In what follows, we fix a realization of $\mathcal I_{s-1}$. 
\begin{defn}
\label{def: attach graph}
	For a triple $(\mathbf R,\mathbf H,\mathbf v)$ with $\mathbf R\subset V(\mathbf H), \mathbf v\in V(\mathbf H)\setminus \mathbf R$ and a vertex $v\in V_{R} =\{v_i:i\in [n]\setminus \operatorname M_{s-1}\}$, we say a subgraph $H \subset G$ is an $(\mathbf R,\mathbf H,\mathbf v)$-\emph{attaching graph rooted at} $v$ if there is an isomorphism $\phi:\mathbf H\to H$, such that $\phi(\mathbf R)\subset M_{s-1},\phi(\mathbf v)= v$, and any two vertices in $\phi(\mathbf R)$ has graph distance at most $\zeta$ on $ {GO}_{s-1}$.
\end{defn}
\begin{lemma}
	\label{lem:last-lemma}
	There exists a large constant $\kappa=\kappa(\eta,\alpha_\eta,\mathbf T,\kappa_0)$ such that  $\Pb[\mathcal G_\kappa|\mathcal I_{s-1}] = 1-o(1)$ where $\mathcal G_{\kappa} = \mathcal G_{\kappa, s}$ is the following event: for any triple $(\mathbf R,\mathbf H,\mathbf v)$ where $(\mathbf R,\mathbf H)$ is a dense pattern with $\mathbf H$ being a subtree of $\mathbf T$ and $\mathbf R=V(\mathbf H)\cap \mathbf L$, and for any vertex $  v\in   V_{  R}$, the number of $(\mathbf R,\mathbf H,\mathbf v)$-attaching graphs rooted at $v$ is bounded by $\kappa$.
\end{lemma}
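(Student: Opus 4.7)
The plan is to prove the lemma by a $\kappa$-th factorial moment computation, where the crucial structural input is the density of $(\mathbf R, \mathbf H)$: applying \eqref{eq-def-dense-pattern} with $\mathbf H' = (\mathbf R, \emptyset)$ (a valid subgraph since $\mathbf R \subset \mathbf L$ is an independent set in $\mathbf T$) gives $\operatorname{Cap}(\mathbf H) = |V(\mathbf H) \cap \mathbf Q| - \alpha_\eta|E(\mathbf H)| < 0$. First, by Lemma~\ref{lem:dominated-by-ER}, the conditional law of $G$ given $\mathcal I_{s-1}$ is stochastically dominated by $GO_{s-1}$ together with an independent $\mathbf G(n, p_\eta)$ on the edges outside $E(GO_{s-1})$. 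Since the event $\mathcal G_\kappa^c$ is monotone increasing in $G$, it suffices to bound its probability under this dominating distribution; as there are only $O(1)$ admissible triples $(\mathbf R, \mathbf H, \mathbf v)$, I fix one and aim for failure probability $o(1)$ per triple.

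For each $v \in V_R$, let $N(v)$ denote the number of $(\mathbf R, \mathbf H, \mathbf v)$-attaching graphs rooted at $v$. By Markov's inequality,
\[
\Pb\Big[\max_{v \in V_R} N(v) \geq \kappa\Big] \leq \sum_{v \in V_R} \E\bigg[\binom{N(v)}{\kappa}\bigg],
\]
and each summand unfolds as a sum over unordered $\kappa$-tuples of distinct attaching graphs at $v$, equivalently over shapes $K = \bigcup_{j=1}^\kappa H_j$ together with admissible embeddings of $K$. Under the dominating law, the probability that any fixed embedding is present in $G$ is at most $p_\eta^{|E(K)|}$. For the embedding count with $v$ at a fixed position, the matched vertices $\phi_j(\mathbf R) \subset M_{s-1}$ form $GO_{s-1}$-clusters of diameter at most $\zeta$, each contained in a ball of size at most $D := (\kappa_0 + \Delta(\mathbf T))^\zeta$ by \eqref{eq-degree-bound}; thus each cluster contributes at most $n \cdot D^{|\mathbf R|-1}$ placement choices, while each unconstrained vertex contributes $\leq n$ choices. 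Summing also over $v \in [n]$ produces a bound of the form $\sum_K n^{\,1 + c(K) + |V_F(K)| - \alpha_\eta|E(K)|}$, where $c(K)$ counts the $GO_{s-1}$-clusters inside $V(K) \cap M_{s-1}$ and $|V_F(K)|$ counts the remaining vertices of $V(K) \setminus \{v\}$.

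In the worst ``fully disjoint'' case of $\kappa$ copies sharing only the vertex $v$, the exponent equals exactly $1 + \kappa \operatorname{Cap}(\mathbf H)$, and any identification of vertices across copies strictly decreases the exponent: fusing $k$ vertices into one drops $c(K) + |V_F(K)|$ by $k-1$ while dropping $|E(K)|$ by at most $k-1$ (since the involved substructures are forests in a tree), for a net exponent change of at most $-(k-1)(1 - \alpha_\eta) < 0$. Since $\operatorname{Cap}(\mathbf H)$ is uniformly bounded away from $0$ over the finite family of admissible triples, choosing $\kappa$ larger than $1/\min_{\text{triples}}|\operatorname{Cap}(\mathbf H)|$ (a constant depending only on $\alpha_\eta, \mathbf T, \kappa_0$) makes the exponent strictly negative, yielding $o(1)$ as desired. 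The main technical burden will be the case-free verification that any overlap of copies strictly decreases the exponent; I expect to handle this by an inductive gluing argument that applies the dense-pattern inequality \eqref{eq-def-dense-pattern} to each pair of newly overlapping substructures, in the spirit of the union-bound computation in \eqref{eq:dense-pattern-union-bound}.
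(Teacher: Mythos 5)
Your overall strategy---bound the probability of $\kappa$ attaching graphs rooted at the same $v$ by a first-moment count over the union shape $K$ and show the exponent becomes negative once $\kappa$ is large---is indeed the paper's strategy, and the observation $\operatorname{Cap}(\mathbf H)<0$ for a dense pattern (via $\mathbf H'=(\mathbf R,\emptyset)$) is correct and used. But there is a genuine gap in the probability estimate, and it is precisely the technical point that makes this lemma nontrivial.

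Under the dominating law from Lemma~\ref{lem:dominated-by-ER}, the edges in $E(GO_{s-1})$ are present with probability $1$ and only the edges outside $E(GO_{s-1})$ are i.i.d.\ $\mathrm{Bernoulli}(p_\eta)$. Hence the probability that a fixed embedding of $K$ is present is $p_\eta^{|E(K)\setminus E(GO_{s-1})|}$, not $p_\eta^{|E(K)|}$ as you assert (and the latter is a \emph{lower}, not upper, bound). An $(\mathbf R,\mathbf H,\mathbf v)$-attaching graph only constrains $\phi(\mathbf v)=v\in V_R$ and $\phi(\mathbf R)\subset M_{s-1}$; all its other vertices may land in $M_{s-1}$, so a copy can have all but its edges at $v$ inside $E(GO_{s-1})$. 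For such shapes the corrected edge exponent $-\alpha_\eta|E(K)\setminus E(GO_{s-1})|$ is far weaker than $-\alpha_\eta|E(K)|$, and your computation of the ``worst fully-disjoint case exponent $1+\kappa\operatorname{Cap}(\mathbf H)$'' no longer holds. Worse, adding a new copy $H_{i+1}$ to $K_i$ can change the exponent by exactly $0$ whenever $H_{i+1}\subset K_i\cup GO_{s-1}$, so the exponent does not necessarily tend to $-\infty$ as $\kappa\to\infty$; a separate argument is needed to rule out long runs of such ``free'' steps.

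This is exactly what the paper's proof supplies and yours does not. The paper replaces your $1+c(K)+|V_F(K)|-\alpha_\eta|E(K)|$ by the correct quantity $P_\zeta(K)=|\mathfrak C_\zeta(K)|-\alpha_\eta|E(K)\setminus E(GO_{s-1})|$, proves the one-step monotonicity $P_\zeta(K_{i+1})\le P_\zeta(K_i)-\delta\,\mathbf 1\{K_{i+1}\not\subset K_i\cup GO_{s-1}\}$, and then invokes the degree bound \eqref{eq-degree-bound} to show that one cannot have $N$ consecutive indices $j$ with $K_{j+1}\subset K_j\cup GO_{s-1}$: all such $H_j$ would live in the $\zeta$-ball of $v$ in $K_i\cup GO_{s-1}$, which has bounded size. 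Only the combination of these two ingredients gives $P_\zeta(K_\kappa)<-1$ for $\kappa$ large, after which the enumeration-and-union-bound step (the analogue of your last paragraph) goes through. Your sketch uses the bounded-degree input only to package the $\phi_j(\mathbf R)$'s into $\zeta$-clusters, but never uses it to control copies that sit entirely inside $GO_{s-1}$, and the incorrect $p_\eta^{|E(K)|}$ masks that this is where the argument can fail. To repair the proposal you would need to (i) replace $|E(K)|$ by $|E(K)\setminus E(GO_{s-1})|$ everywhere and (ii) add the bounded-degree argument that the number of copies lying inside $K_i\cup GO_{s-1}$ is $O(1)$ per step, i.e.\ essentially reproduce the paper's two-step claim.
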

\begin{proof}
	The proof is similar to  that of Lemma~\ref{lem:G-is-typical}, and we begin with introducing $P_\zeta(K)$ as an analogue of $P(K)$ in the proof of Lemma~\ref{lem:G-is-typical}. For a subgraph $K\subset G$, we draw an adjunctive edge between any pair of vertices $u, v\in V(K)$ if and only if $u, v$ has graph distance at most $\zeta$ on $ {GO}_{s-1}$. This gives an adjunctive graph on $V(K)$, and we denote the collection of its components by $\mathfrak C_\zeta(K)$. Then we define $P_\zeta(K)=|\mathfrak C_\zeta(K)|-\alpha_\eta|E(K)\setminus E(GO_{s-1})|$.  
	
Our proof is essentially by contradiction, that is, we will show that if $\mathcal G_\kappa$ fails then a rare event must occur. To this end, let $H_1,\dots, H_\kappa\cong \mathbf H$ be distinct $(\mathbf R,\mathbf H,\mathbf v)$-attaching graphs rooted at some $v\in V_{R}$,  let $\phi_i:\mathbf H\to H_i$ be the isomorphism as in Definition~\ref{def: attach graph} and let $ K_{i}= H_1\cup \cdots\cup H_i$ for $1\le i\le \kappa$. We claim that for $1\leq i < \kappa$,
\begin{equation}\label{eq-P-zeta-monotone}
P_\zeta(K_{i+1}) \leq P_\zeta(K_{i}) - \delta \mathbf 1_{\{K_{i+1}\not\subset K_i\cup {GO}_{s-1}\}}\,,
\end{equation}
where $\delta>0$ is a constant which does not depend on $i$ or $\kappa$. We now fix $i$ and prove \eqref{eq-P-zeta-monotone}. To this end, we consider components $C_1,\dots, {C}_r$ of  $K_{i+1}\setminus_\star K_i$ (recall the definition of $H_1\setminus_\star H_2$ for two simple graphs as in the proof of Lemma~\ref{lem:designing-tree}, and we view $  C_j,1\le j\le r$ as subgraphs of $H_{i+1}$). Let $N_j$ be the number of components in $\mathfrak C_\zeta( {K}_{i+1})$ intersecting $ {C}_j$ but \emph{not} containing $v$, and let $E_j = |E(C_j) \setminus E(GO_{s-1})|$. Then, we have
\[
P_\zeta(K_{i+1})-P_\zeta(K_i)\le\sum_{j=1}^r (N_j-\alpha_\eta E_j)\,.
\]
For each ${C}_j$, let ${F}_j$ be the subgraph on vertices  $V(H_{i+1})\setminus\left(V( {C}_j) \setminus (V( {K}_i)\cup V({GO}_{s-1}))\right)$ with edges $E(H_{i+1})\setminus(E({C}_j) \setminus (E(K_i)\cup E({GO}_{s-1})))$. We further write $F_j= {F}_{j,0}\cup  {T}_{j, 1}\cup\ldots {T}_{j, k_j}$, where $ {F}_{j, 0}$ is the union of components of $ {F}_j$ which intersect  $\phi_{i+1}(\mathbf R)$, and $ {T}_{j, l}$'s (for $l=1,\dots,k_j$) are the remaining tree components (here possibly $k_j = 0$). Writing $\mathbf F_{j,0} =\phi_{i+1}^{-1}( {F}_{j,0})$, we have
\begin{equation}\label{eq-equality-E-j}
E_j=|E( {H}_{i+1})|-|E( {F}_{j,0})|-\sum_{l=1}^{k_j}|E( {T}_{j, l})|=|E(\mathbf H)\setminus E(\mathbf F_{j,0})|-\sum_{l=1}^{k_j} |E( {T}_{j, l})|\,.
\end{equation}
 For the estimation of $N_j$, we claim that 
 \begin{equation}
 	\label{eq:est-Nj}
 	N_j\le k_j + |V( {H}_{i+1})|-|V( {F}_{j,0})|-\sum_{l=1}^{k_j}|V( {T}_{j, l})|=|V(\mathbf H)\setminus V(\mathbf F_{j,0})|-\sum_{l=1}^{k_j}|E( {T}_{j,l})|\,.
 \end{equation}
Indeed, since $V( {T}_{j, l})$ belongs to a single component in  $\mathfrak C_\zeta( {K}_{i+1})$ for $1\le l\le k_j$ and also the vertices in $ {F}_{j,0}$ are in a single component  in  $\mathfrak C_\zeta( {K}_{i+1})$ (by the definition of $(\mathbf R,\mathbf H,\mathbf v)$-attaching and the fact that $\phi_{i+1}(\mathbf R)\subset V( {F}_{j,0})$), we get that the number of components in $\mathfrak{C}_\zeta( {K}_{i+1})$ which intersect $ {C}_j$ is at most
$
1+k_j+|V( {H}_{i+1})|-|V( {F}_{j,0})|-\sum_{l=1}^{k_j}|V( {T}_{j,l})|
$ (this is because, the number of components on $V(F_j)$ is at most $1+k_j$ and each vertex outside $V(  F_j)$ induces at most one component).
Moreover, since $N_j$ does not count the component in $\mathfrak{C}_\zeta( {K}_{i+1})$ which contains $v$, we can remove one of the above components (possibly $F_{j,0}$ or one of the $T_{j,l}$'s) when counting $N_j$. This verifies \eqref{eq:est-Nj}.
Combined with \eqref{eq-equality-E-j}, it yields that
\begin{align*}
N_j-\alpha_\eta E_j
\le &\ |V(\mathbf H)\setminus V(\mathbf F_{j,0})|-\alpha_\eta |E(\mathbf H)\setminus E(\mathbf F_{j,0})|-(1-\alpha_\eta)\sum_{l=1}^{k_j}|E(T_{j, l})|\,,
\end{align*}
which is $\leq -\delta\mathbf{1}_{\{\mathbf{F}_{j}\neq \mathbf{H}\}}$ for some $\delta>0$ since $(\mathbf R,\mathbf H)$ is a dense pattern (recall \eqref{eq:def-delta} and $\mathbf R\subset V(\mathbf F_{j,0})$). Letting $\mathbf F_j = \phi_{i+1}^{-1}(F_j)$, we then conclude \eqref{eq-P-zeta-monotone} by the observation that $$\{{K}_{i+1}\not\subset {K}_i\cup {GO}_{s-1}\}\subset \bigcup_{j=1}^r \{\mathbf{F}_j\neq \mathbf H\}, \mbox{ and thus } \mathbf{1}_{\{{K}_{i+1}\not\subset {K}_i\cup {GO}_{s-1}\}}\le \sum_{j=1}^r\mathbf{1}_{\{\mathbf F_j\neq \mathbf H\}}\,.$$

In addition, for each $i$, we claim that there exists a large constant $N$ depending only on the size of $K_i$, such that $K_{j+1}\subset K_j\cup GO_{s-1}$ cannot hold simultaneously for all $j\in\{i,i+1,\dots,i+N\}$. We prove this by contradiction. Suppose otherwise and then we see $H_{i+1},\dots,H_{i+N}$ must all be contained in $K_i\cup GO_{s-1}$. Furthermore, since each $H_j$ is connected, these graphs must be contained in the $\zeta$-neighborhood of $v$ in $K_i\cup GO_{s-1}$. Note that the number of vertices in this $\zeta$-neighborhood is at most $\sum_{j=0}^\zeta(\Delta(GO_{s-1}))^j$ where $\Delta(GO_{s-1})$ is the maximal degree and is uniformly bounded. Recalling \eqref{eq-degree-bound}, we arrive at a contradiction if $N$ is chosen sufficiently large.

Combining the preceding claim and \eqref{eq-P-zeta-monotone}, we can choose $\kappa= \kappa(\zeta, N, \delta, \chi)$ sufficiently large such that that $P_\zeta(K_\kappa) < -1$ on $\mathcal G_\kappa^c$. We next bound the enumeration for $K_\kappa$ given $\mathcal I_{s-1}$. To this end, we note that for each component in $\mathfrak C_\zeta(K_\kappa)$ the number of choices is $O(n)$ (where the $O$-term depends on $(\zeta, N, \delta, \chi)$); this is because for each such component once a vertex is fixed the number of choices for remaining vertices is $O(1)$ by connectivity and by \eqref{eq-degree-bound}.  Then in light of Lemma~\ref{lem:dominated-by-ER} (i), we can show that $\Pb[\mathcal G_\kappa^c] \to 0$ via a union bound over all possible choices of $K_\kappa$ (which is similar to that for  \eqref{eq:dense-pattern-union-bound}). This completes the proof.
\end{proof}

Now we are ready to present the proof of Proposition~\ref{prop:good-events-are-typical}.
\begin{proof}[Proof of Proposition~\ref{prop:good-events-are-typical}]
	Assume \eqref{eq-G-good-induction-assumption}. Recall $\Gc$ as in Definition~\ref{def:good-event} and recall $\mathcal G_\kappa = \mathcal G_{\kappa, s}$ from the statement of Lemma~\ref{lem:last-lemma}. For $\Gc_s^1$, recall Definition~\ref{def:G-s-1} and the notations therein. We claim that $\Gc\cap \Gc_{\kappa}\subset \Gc_s^1$. Provided with this, we obtain from Lemmas~\ref{lem:G-is-typical} and \ref{lem:last-lemma} that
	\[
	\Pb\big[(\Gc_s^1)^c\big]\le \Pb\big[\Gc^c\big]+\Pb\big[(\Gc_\kappa)^c\big]=o(1)+\mathbb{E}\Big[\Pb\big[(\Gc_\kappa)^c\mid \mathcal I_{s-1}\big]\Big]=o(1)\,.
	\]
We next prove the claim. To this end, for each fixed pair $(\mathbf F_1,\mathbf F_2)=(\mathbf T_0,\mathbf T_1\cup \cdots\cup \mathbf T_l)\in \mathcal P_0$, denote $\mathbf R_i=V(\mathbf T_i)\cap \mathbf L$ for $0\le i\le l$. From the definition of $\mathcal P_0$, we see $\mathbf T_i$ is a dense tree and thus $(\mathbf R_i,\mathbf T_i)$ is a dense pattern for each $1\le i\le l$. 

Fix an arbitrary subgraph $\mathsf T\cong \mathbf T$ in $\mathsf G$ with leaf set $\mathsf L$ and $V(\mathsf T) \cap \mathsf{M}_{s-1} = \mathsf L$. Recall  the definition of $\operatorname{Enum}(\mathbf F_1,\mathbf F_2)$ (with respect to $\mathsf T$) below \eqref{eq:containing-relation}: it counts the number of $\xi$-tuples $\operatorname L'\in \bigcup_{1\le t \le s-1}\operatorname{EXP}_{t}$ which satisfy that there are two embeddings $\phi_1:\mathbf T_0\to \mathsf T\cup \mathsf{GO}_{s-1}$ and $ \phi_2:\mathbf T_1\cup \cdots\cup \mathbf T_l\to \mathsf T\cup\mathsf{GO}_{s-1}$ such that $$\phi_1\big(E(\mathbf T_0)\big)\cap E(\mathsf T)\neq \emptyset\text{ and }\phi_1(\mathbf R_0)\cup\phi_2(\mathbf R_1\cup\cdots\cup \mathbf R_l)= I_G(\operatorname L')\,.$$ 
	Note that $GO_{s-1}\cong \mathsf{GO}_{s-1}$ through $\Pi\circ I_G^{-1}$. Combined with the fact that $\operatorname{L}'\in \bigcup_{1\le t\le s-1}\operatorname{EXP}_t$ implies $\operatorname{L}'\Join_{G} [n]$, 
	it yields that each such tuple corresponds to an embedding $\psi:\mathbf T\to G$ which satisfies the following conditions: \\
	\noindent(i) $\psi(\mathbf R_0)$ is contained in the $\zeta$-neighborhood of $L=I_G\circ \Pi^{-1}(\mathsf L)$ on $GO_{s-1}$; \\
	\noindent(ii) for each $1\le i\le l$, we have $\psi(\mathbf R_i)\subset GO_{s-1}$ and the diameter of $\psi(\mathbf R_i)$ with respect to the graph metric  on ${GO}_{s-1}$ is at most $\zeta$. 
	
	Thus, we can bound $\operatorname{Enum}(\mathbf F_1,\mathbf F_2)$ by the number of such embeddings. To this end, note that on $\Gc$ we have 
	\begin{equation}\label{eq-moderate-degree-condition}
	\Delta(G)\le \kappa_4np_\eta\,.
	\end{equation}
For the number of possible realizations of $\psi\big(V(\mathbf T_0)\big)$, when $\mathbf R_0=\emptyset$ it is bounded by $O\big(n\times (np_\eta)^{|E(\mathbf T_0)|}\big)$ using \eqref{eq-moderate-degree-condition}; when $\mathbf R_0\neq \emptyset$ it is bounded by $O\big(\mathbb D(\mathbf T_0)\big)$ using \eqref{eq-degree-bound} and \eqref{eq:ext-bound} (More precisely, the number of ways of choosing $\psi(\mathbf R_0)$ is $O(1)$ by \eqref{eq-degree-bound} and for each fixed $\psi(\mathbf R_0)$, the number of ways of choosing the rest of $\psi\big(V(\mathbf T_0)\big)$ is $O\big(\mathbb{D}(\mathbf T_0)\big)$ by \eqref{eq:ext-bound}).
In addition, for any edge $(\mathbf u,\mathbf v)\in E(\mathbf T)\setminus( E(\mathbf T_0)\cup \cdots \cup E(\mathbf T_l))$, once $\psi(\mathbf u)$ is fixed, the number of choices for $\psi(\mathbf v)$ is $O(np_\eta)$ by \eqref{eq-moderate-degree-condition}.
		For each $1\le i\le l$ and any $\mathbf v\in V(\mathbf T_i)$, once $\psi(\mathbf v)$ is fixed, each realization of $\psi\big(V(\mathbf T_i)\big)$ corresponds to a $(\mathbf R_i,\mathbf T_i,\mathbf v)$-attaching graph rooted at $\psi(\mathbf v)$, and thus the number of such realizations is at most $\kappa$ by $\Gc_\kappa$.

Provided with preceding observations, we may bound the number of these embeddings (which in turn bounds $\operatorname{Enum}(\mathbf F_1,\mathbf F_2)$) by first choosing $\psi\big(V(\mathbf T_0)\big)$ and then choosing the $\psi$-values for the remaining vertices on $\mathbf T$ inductively. More precisely, the ordering for choosing vertices satisfy the following properties (see Figure~\ref{fig:enumeration_bound} for an illustration):
(i) we first determine the $\psi$-value of vertices in $V(\mathbf T_0)$;
(ii) for the remaining vertices, the vertex whose $\psi$-value is to be chosen is neighboring to some vertex whose $\psi$-value has been chosen;
(iii) whenever we choose $\psi(\mathbf v)$ for any $\mathbf v\in V(\mathbf T_i)$, we also choose $\psi\big(V(\mathbf T_i)\big)$ immediately (which has at most $\kappa$ choices as noted above). Therefore, we conclude that 
\[
\operatorname{Enum}(\mathbf F_1,\mathbf F_2)\lesssim \begin{cases}
n\times (np_\eta)^{|E(\mathbf T_0)|}\times (np_\eta)^{|E(\mathbf T)\setminus (E(\mathbf T_0\cup \cdots\cup (\mathbf T_l))|},\quad&\text{if }\mathbf R_0=\emptyset\,,\\
\mathbb D(\mathbf T_0)\times (np_\eta)^{|E(\mathbf T)\setminus (E(\mathbf T_0\cup \cdots\cup (\mathbf T_l))|},\quad& \text{if }\mathbf R_0\neq \emptyset\,,
\end{cases}
\]
\begin{figure}[ht]
\centering
\includegraphics[]{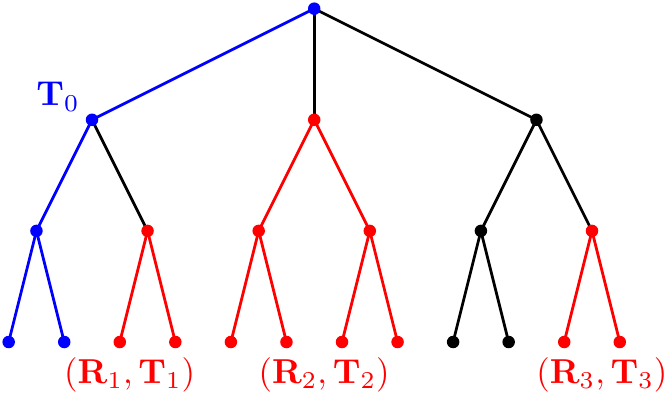}
\caption{In order to enumerate, as described above we choose the blue, black and red parts of $\mathbf T$ in order.}
\label{fig:enumeration_bound}
\end{figure}
which implies \eqref{eq:good-s-1} with an appropriate choice of $\kappa_1$. This proves the claim.

	Next we investigate $\Gc_s^2$ as in Definition~\ref{def:G-s-2}. For Item (i), we first note that by the sub-Gaussian concentration of Bernoulli variables, it holds that each vertex in $G$ has degree at least $(1-\eta/8)np_\eta$ except with exponentially small probability. Furthermore, from Lemma~\ref{lem:dominated-by-ER} we also obtain that under any conditioning of $\mathcal I_{s-1}$, except with exponentially small probability we have for any $v\in V_R$, the total number of edges between $v$ and vertices in $V\setminus V_{\mathtt M}$ is no more than $(1+\eta/8)(n-|\mathtt{M}_s|)p_\eta$. Therefore, with probability $1-o(1)$ for each vertex $v\in V_R$ the number of neighbors in $V_{\mathtt M}$ is at least
	\[
	(1-\eta/8)np_\eta-(1+\eta/8)(n-|\mathtt M_s|)p_\eta\ge( |\mathtt M_s|-\eta n/4)p_\eta\ge \eta np_\eta/4\,.
	\]
 A similar argument applies for $V_R$ and we get that with probability $1-o(1)$, each $v\in V_R$ has at least $\eta np_\eta/4$ neighbors in $V_R$.

	Provided with these degree lower bounds, the number of $\xi$-tuples in $\operatorname{CAND}_s$ 
	is at least of order $(np_\eta)^{\zeta}$. In order to get $\operatorname{GC}_s$, we still need to remove those $s$-bad tuples in $\operatorname{CAND}_s$, and it suffice to show that the total number of removed tuples is typically $o\big((np_\eta)^\zeta\big)$. By Markov's inequality, we only need to show that $\mathbb{E}| \operatorname{CAND}_s\setminus \operatorname{GC}_s|= o\big((np_\eta)^\xi\big)$. Taking conditional expectation with respect to $\mathcal I_{s-1}$ and using the averaging over conditioning principle, we see
	\begin{equation}
		\label{eq:exp-of-removed-tuples}
		\mathbb{E}|\operatorname{CAND}_s\setminus \operatorname{GC}_s|= \mathbb E\Big[\sum_{\operatorname{L}\in\mathfrak{A}(\mathtt{M}_s,\xi)} \Pb\big[\operatorname{L}\in \operatorname{CAND}_s\setminus \operatorname{GC}_s\mid \mathcal I_{s-1}\big]\Big]\,.
	\end{equation}
Denote $\mathfrak{A}_s$ as the set of tuples in $\mathfrak{A}(R_{s-1},\chi)$ with the first coordinate equals to $u_s$ (which is the minimal number in $R_{s-1}$), from the definition of $\operatorname{CAND}_s$ and a union bound we see 
    \begin{equation}
    \label{eq:last-bound-1}
   \Pb\big[\operatorname{L}\in \operatorname{CAND}_s\setminus\operatorname{GC}_s\mid \mathcal I_{s-1}\big]
   \leq \sum_{\operatorname{Q}\in \mathfrak{A}_s}\Pb\big[I_G(L)\Join_G Q,\operatorname{L}\notin \operatorname{GC}_s\mid \mathcal I_{s-1}\big]\,,
    \end{equation}
    where $L,Q$ denote for $I_G(\operatorname{L}),I_G(\operatorname{Q})$, respectively. We further denote $T_{L,Q}$ for the tree that certifies the event $\{L\Join_{G} Q\}$.
    From Definition~\ref{def:s-good}, $\operatorname{L}\notin \operatorname{GC}_s$ implies that there exists a subgraph $T^*\cong \mathbf T$ in $T_{L,Q}\cup GO_{s-1}$ with leaf set $L^*$, such that $E(T^*)\cap E(T_{L,Q})\neq \emptyset$ and $I_G^{-1}(L^*)\in \operatorname{EXP}_{t^*}$ for some $1\le t^*\le s-1$. 
    We write $\operatorname{d}_{GO_{s-1}}$ for the graph distance on $GO_{s-1}$, then it is readily to see that the two tuples $\operatorname{L}=I_G^{-1}(L)=(l_1,\dots,l_\xi)$ and $\operatorname{L}^*=(l_1^*,\dots,l_\xi^*)$ must satisfy the following two properties:\\
    \noindent (i) For any $i\in [\xi]$, there exists $j\in [\xi]$ such that $\operatorname{d}_{GO_{s-1}}(v_{l_i^*},v_{l_j})\le \zeta$.\\
    \noindent (ii) For at least two indices $p\in [\xi]$, there exists $q\in [\xi]$ such that $\operatorname{d}_{GO_{s-1}}(v_{l_p},v_{l_q^*})\le \zeta$. \\
    We denote by $\mathtt L(\operatorname{L})$ the collection of all $\xi$-tuples $\operatorname{L}^*\in \mathfrak{A}(\operatorname{M}_{s-1},\xi)$ that satisfy Property (i) above.

    For any triple $(\operatorname{L},\operatorname{Q},\operatorname{L}^*)$ with $\operatorname{L}^*\in \mathtt L(\operatorname{L})$, we divide the event $\operatorname{L}^*\in \operatorname{EXP}_{t^*}$ for some $1\le t^*\le s-1$ into two cases according to whether 
    \begin{equation}\label{eq-condition-t}
    	t^* \in \mathtt t(\operatorname{L}) \mbox{ where } \mathtt t(\operatorname{L}) = \{t': d_{GO_{s-1}}(v_{u_{t'}},v_{l_i})\le \zeta\text{ for some }i\in [\xi]\}\,.
    \end{equation}
   For the case that $t^*\notin \mathtt t(\operatorname{L})$, the tree that certifies $\operatorname{L}'\in \operatorname{CAND}_{t^*}$ can not be fully contained in $T_{L,Q}\cup GO_{s-1}$. 
   From Lemma~\ref{lem:dominated-by-ER} and the proof of Lemma~\ref{lem:unique-is-whp} (see e.g., \eqref{eq-unique-is-typical}), we see such case happens with probability $o(1)$ uniformly under any conditioning $\mathcal I_{s-1}\cup \{T_{L,Q}\subset G\}$. For the case that $t^*\in \mathtt t(\operatorname{L})$, we apply a union bound and thus we get
	\begin{align*}
	\Pb[L\Join_{G} Q,\operatorname{L}\notin \operatorname{GC}_s\mid \mathcal{I}_{s-1}]\le&\  o(1)\times \Pb[L\Join_{G} Q\mid \mathcal I_{s-1}]\\
	+&\ \sum_{\operatorname{L}^*\in \mathtt L(\operatorname{L})}\sum_{t^* \in \mathtt t(\operatorname{L})}\Pb[L\Join_{G} Q,\operatorname{L}^*\in \operatorname{EXP}_{t^*}\mid \mathcal I_{s-1}]\,.
	\end{align*}
    From Lemma~\ref{lem:dominated-by-ER} we see $\Pb[L\Join_G Q\mid \mathcal I_{s-1}]\le p_\eta^\zeta$ for any realization of $\mathcal I_{s-1}$. Thus the first term above is $o(p_\eta^\zeta)$. To treat the remaining terms, we note that\\
    \noindent (a) $\operatorname{L}^*\in \operatorname{EXP}_{t^*}$ implies that $ \operatorname{L}^*\in \bigcup_{1\le t\le s-1}\operatorname{SUC}_{t}$ or $\operatorname{L}^*\in \operatorname{FAIL}_{t^*}\setminus\bigcup_{1\le t\le s-1}\operatorname{SUC}_{t}$; \\
    \noindent (b) if $\operatorname{L}^*\in\bigcup_{1\le t\le s-1} \operatorname{SUC}_{t}$, then from Property (ii) above,
    \begin{equation}\label{eq-L-condition-b}
   \mbox{ there are two indecies $i,j\in \operatorname{L}$ such that $\operatorname{d}_{GO_{s-1}}(v_{i},v_{j})\le 3\zeta$};\\
   \end{equation}
    \noindent (c) $\{\operatorname{L}^*\in \operatorname{FAIL}_{t^*}\} = \{I_{t^*}=0\} \cup \{I_{t^*}=1, \operatorname{L}^*\prec \operatorname{L}_{t^*}\}$ (recall that $\operatorname{MT}_t=(\operatorname{L}_t,\operatorname{Q}_t,\operatorname{Q}_t')$ for $1\le t\le s-1$ with $I_t=1$).\\
    Combining these together, we see for each pair $(\operatorname{L},\operatorname{Q})$, any $\operatorname{L}^*\in \mathtt L(\operatorname{L})$ and any $t^*\in \mathtt t(\operatorname{L})$, it holds that $\Pb[L\Join_{G} Q,\operatorname{L}^*\in \operatorname{EXP}_{t^*}\mid \mathcal I_{s-1}]$ is bounded by $p_\eta^\zeta$ times 
\begin{equation}
	\label{eq:bound-three-terms}
    \begin{aligned}
    &\ \mathbf{1}_{\{\operatorname{L}\text{ satisfies }\eqref{eq-L-condition-b}\}}
    +\Pb[X_t<\log n\mid \mathcal I_{s-1},L\Join_G Q]\\
    +&\ \Pb[X_{t^*}\ge \log n,\operatorname{L}^*\notin \bigcup_{1\le t\le s-1}\operatorname{SUC}_{t},\operatorname{L}^*\prec \operatorname{L}_{t^*}\mid \mathcal I_{s-1},L\Join_{G} Q]\,.
    \end{aligned}
\end{equation}
   In order to bound the second term in \eqref{eq:bound-three-terms}, note that if $\{X_t<\log n\}$ holds under $\mathcal I_{s-1}\cap \{L\Join_G Q\}$, then it also holds under $\mathcal I_{s-1}$ together with any other configuration on the set $E(T_{L, Q})$. Thus,
    \[
    \Pb[X_{t^*}<\log n\mid \mathcal I_{s-1},L\Join_{G} Q]\le \Pb[X_{t^*}<\log n\mid \mathcal I_{s-1}]\,.
    \]
   In order to bound the third term in \eqref{eq:bound-three-terms}, denote $\operatorname{Avai}_t$ for the (random) set of tuples in $\operatorname{CAND}_t$ that can be successfully matched, $1\le t\le s-1$. Then $|\operatorname{Avai}_t|=X_t$ and the conditional law of $\prec$ given $\mathcal I_{s-1}\cap \{L\Join_{G} Q\}$ is uniform conditioned on the following event:
    \[
    \{\operatorname{L}_t\text{ is $\prec$-minimal among }\operatorname{Avai}_t, \mbox{ for all } 1\le t\le s-1 \text{ with }I_t=1\}\,.
    \]
    In particular, under such conditioning, for $\operatorname{L}^*\notin \bigcup_{1\le t\le s-1}\operatorname{SUC}_{t}$ it holds with conditional probability $1/X_{t^*}$ that $\operatorname{L}^* \prec \operatorname{L}_{t^*}$. As a result, the third term is bounded by $(\log n)^{-1}=o(1)$.
    
    Summing over all $(\operatorname{L},\operatorname{Q})$ and combining all the arguments above altogether, we get 
    \begin{align}\label{eq-big-sum-expression}
    \sum_{\operatorname{L}\in\mathfrak{A}(\mathtt{M}_s,\xi)} &\Pb\big[\operatorname{L}\in \operatorname{CAND}_s\setminus \operatorname{GC}_s\mid \mathcal I_{s-1}\big]  \nonumber\\
    \leq
    \sum_{\operatorname{L}\in \mathfrak{A}(\mathtt{M}_s,\xi)}\sum_{\operatorname{Q}\in \mathfrak{A}_s}\sum_{\operatorname{L}^*\in \mathtt L(\operatorname{L})}\sum_{t^*\in \mathtt t(\operatorname{L})}&p_\eta^\zeta\Big(o(1)+\mathbf{1}_{\{\operatorname{L}\text{ satisfies }\eqref{eq-L-condition-b}\}}+\Pb[X_{t^*}<\log n\mid \mathcal I_{s-1}]\Big)\,.
    \end{align}
   We now bound the number of effective terms in the summation of \eqref{eq-big-sum-expression}. Clearly, $|\mathfrak{A}(\mathtt{M}_t,\xi)|\le n^\xi$ and $|\mathfrak{A}_s|\le n^{\chi-1}$. In addition, by \eqref{eq-degree-bound} we see both $|\mathtt L(\operatorname{L})|$ and $|\mathtt t(\operatorname{L})|$ are uniformly bounded for $\operatorname{L}\in \mathfrak{A}(\mathtt{M}_s,\xi)$. Furthermore, the number of $\operatorname{L}\in \mathfrak{A}(\mathtt{M}_s,\xi)$ that satisfy \eqref{eq-L-condition-b} is of order $O\big(n^{\xi-1}\big)$, and for each $1\le t^*\le s-1$ the number of tuples $\operatorname{L}\in \mathfrak{A}(\mathtt M_s,\xi)$ such that $\mathtt t(\operatorname{L})$ contains $t^*$ is also of order $O\big(n^{\xi-1}\big)$ (since some vertex in $L$ must be close to $v_{u_t}$ on the graph $GO_{s-1}$). Altogether, we see the right hand side of \eqref{eq-big-sum-expression} is upper-bounded by (recall that $\zeta=\xi+\chi-1$)
    \[
    O\big(n^\zeta\big)\times o\big(p_\eta^\zeta\big)+O\big(n^{\zeta-1}\big)\times p_\eta^\zeta+O\big(n^{\zeta-1}\big)\times p_\eta^\zeta\times\sum_{1\le t^*\le s-1}\Pb[X_{t^*}<\log n\mid \mathcal I_{s-1}]\,.
    \]
  Averaging over $\mathcal I_{s-1}$ and recalling \eqref{eq:exp-of-removed-tuples} and \eqref{eq-big-sum-expression}, we obtain that
    \[\mathbb E|\operatorname{CAND}_s\setminus \operatorname{GC}_s| = 
    o\big((np_\eta)^\zeta\big)+O\big(n^{\zeta-1}p_\eta^\zeta\big)\sum_{1\le t^*\le s-1}\Pb[X_{t^*}<\log n]\stackrel{\eqref{eq;good-step-est}}{=}o\big((np_\eta)^\zeta\big)\,,
    \]
	as desired. This implies that Item (i) of $\mathcal G_s^2$ in Definition~\ref{def:G-s-2}  holds with probability $1-o(1)$.

Finally, we treat Item (ii) of $\mathcal G_s^2$ in Definition~\ref{def:G-s-2} and it suffices to show that it holds on the event $\Gc$. Since each tuple in $\operatorname{GC}_s$ corresponds to a subgraph $T\cong \mathbf T$ in $G$ rooted at $v_{u_s}$, \eqref{eq-moderate-degree-condition} implies that  $|\operatorname{GC}_s|=O\big((np_\eta)^\zeta\big)$. For each fixed $\operatorname L_i\in \operatorname{GC}_s$ and a nonempty subset $\mathbf R \subset \mathbf L$, we bound the number of $\operatorname L_j\in \operatorname{GC}_s$ such that $(\operatorname L_i,\operatorname L_j)\in \operatorname{IP}_s(\mathbf R)$ as follows: each such $\operatorname L_j$ corresponds to a subgraph ${T}\cong \mathbf T$ of ${G}$ with leaves in $\mathbf R$ mapped to a fixed subset in $V(G)$ under the isomorphism. In order to bound the enumeration for such ${T}$, we use the following two-step procedure to choose ${T}$:
\begin{itemize}
	\item  choose a subgraph ${T}_0\cong \mathbf{Span}(\mathbf R)$ of ${G}$ with leaves in $\mathbf R$ mapped to a fixed subset in $V(G)$ under the isomorphism;
	\item choose ${T}\cong \mathbf T$ of ${G}$ with $\mathbf{Span}(\mathbf R)$ mapped to $T_0$ under the isomorphism.
\end{itemize} 
Thus, by Definition~\ref{def:good-event} and by \eqref{eq-moderate-degree-condition}, the number of choices for such $T$ is $O\big(\mathbb D(\mathbf{Span}(\mathbf R))\times (np_\eta)^{|E(\mathbf T)\setminus E(\mathbf{Span}(\mathbf R))|}\big)$ on $\Gc$, and thus Item (ii) of $\Gc_s^2$ holds with probability $1-o(1)$. This completes the proof of Proposition~\ref{prop:good-events-are-typical}. 
\end{proof}

	\small

\end{document}